\documentclass[11pt]{article}

\usepackage[margin=1in]{geometry}
\usepackage{amsmath,amsfonts,amssymb,amsthm,amsopn,bm,mathtools}
\usepackage{graphicx}
\usepackage{algorithm}
\usepackage{algorithmic}
\usepackage{caption}
\usepackage{subfig}
\usepackage{cite}
\usepackage[hidelinks,bookmarksnumbered]{hyperref}
\usepackage[nameinlink,noabbrev]{cleveref}

\DeclareGraphicsExtensions{.pdf,.png,.jpg,.eps}
\allowdisplaybreaks
\numberwithin{equation}{section}
\numberwithin{algorithm}{section}

\newcommand{\email}[1]{\texttt{#1}}

\theoremstyle{plain}
\newtheorem{theorem}{Theorem}[section]
\newtheorem{lemma}[theorem]{Lemma}
\newtheorem{claim}[theorem]{Claim}
\newtheorem{fact}[theorem]{Fact}
\theoremstyle{definition}
\newtheorem{assumption}[theorem]{Assumption}

\crefname{theorem}{theorem}{theorems}
\Crefname{theorem}{Theorem}{Theorems}
\crefname{lemma}{lemma}{lemmas}
\Crefname{lemma}{Lemma}{Lemmas}
\crefname{claim}{claim}{claims}
\Crefname{claim}{Claim}{Claims}
\crefname{fact}{fact}{facts}
\Crefname{fact}{Fact}{Facts}
\crefname{assumption}{assumption}{assumptions}
\Crefname{assumption}{Assumption}{Assumptions}
\crefname{remark}{remark}{remarks}
\Crefname{remark}{Remark}{Remarks}
\crefname{hypothesis}{hypothesis}{hypotheses}
\Crefname{hypothesis}{Hypothesis}{Hypotheses}
\crefname{example}{example}{examples}
\Crefname{example}{Example}{Examples}
\crefname{algorithm}{algorithm}{algorithms}
\Crefname{algorithm}{Algorithm}{Algorithms}

\title{Adaptive Delayed-Update Cyclic Algorithm \\ for Variational Inequalities\thanks{\textbf{Funding:} Supported in part by the Air Force Office of Scientific Research under award number FA9550-24-1-0076 and by the U.S. Office of Naval Research under contract number N00014-22-1-2348. Any opinions, findings and conclusions or recommendations expressed in this material are those of the author(s) and do not necessarily reflect the views of the U.S. Department of Defense.}}

\author{
Yi Wei\thanks{Paul G. Allen School of Computer Science and Engineering, University of Washington, Seattle, WA (\email{ywei32@cs.washington.edu}). Much of this work was completed while the author was an undergraduate student at the University of Wisconsin--Madison.}
\and
Xufeng Cai\thanks{Department of Computer Sciences, University of Wisconsin--Madison, Madison, WI (\email{xcai74@wisc.edu}; \email{jelena@cs.wisc.edu}).}
\and
Jelena Diakonikolas\footnotemark[3]
}

\date{}
\hypersetup{
  pdftitle={Adaptive Delayed-Update Cyclic Algorithm for Variational Inequalities},
  pdfauthor={Yi Wei, Xufeng Cai, Jelena Diakonikolas}
}

\DeclareMathOperator{\Gap}{Gap}

\newcommand{\innp}[1]{\big\langle #1 \big\rangle}

\newcommand{\dom}{\mathrm{dom}}
\newcommand{\aduca}{\texttt{ADUCA}}
\newcommand{\graal}{\texttt{GRAAL}}
\newcommand{\coder}{\texttt{CODER}}
\newcommand{\pccm}{\texttt{PCCM}}

\def\1{\bm{1}}

\def\vb{{\bm{b}}}

\def\vu{{\mathbf{u}}}
\def\vv{{\mathbf{v}}}

\def\vx{{\mathbf{x}}}
\def\vy{{\mathbf{y}}}

\def\vzeta{{\bm{\zeta}}}

\def\Chat{{\hat{C}}}
\def\Lhat{{\hat{L}}}

\def\mA{{\bm{A}}}

\def\mF{{\bm{F}}}
\def\mFb{\bar{\bm{F}}}
\def\mFt{\tilde{\bm{F}}}

\def\mI{{\bm{I}}}

\def\mQ{{\bm{Q}}}

\def\mLambda{{\bm{\Lambda}}}

\DeclareMathAlphabet{\mathsfit}{\encodingdefault}{\sfdefault}{m}{sl}
\SetMathAlphabet{\mathsfit}{bold}{\encodingdefault}{\sfdefault}{bx}{n}

\def\gC{{\mathcal{C}}}

\def\gE{{\mathcal{E}}}

\def\gO{{\mathcal{O}}}

\def\gS{{\mathcal{S}}}
\def\gT{{\mathcal{T}}}

\def\sR{{\mathbb{R}}}

\newcommand{\R}{\mathbb{R}}

\newcommand{\cO}{\mathcal{O}}

\DeclareMathOperator*{\argmin}{argmin}

\newcommand{\littlesum}{\mathop{\textstyle\sum}}

\begin{document}

\maketitle

\begin{abstract}
Cyclic block coordinate methods are a fundamental class of first-order algorithms, widely used in practice for their simplicity and strong empirical performance. Yet, their theoretical behavior remains challenging to explain, and setting their step sizes---beyond classical coordinate descent for minimization---typically requires careful tuning or line-search machinery. 
In this work, we develop \aduca\ (Adaptive Delayed-Update Cyclic Algorithm), a cyclic algorithm addressing a broad class of Minty variational inequalities with monotone Lipschitz operators.  \aduca\ is parameter-free: it requires no global or block-wise Lipschitz constants and uses no per-epoch line search, except at initialization.  A key feature of the algorithm is using operator information delayed by a full cycle, which makes the algorithm  compatible with parallel and distributed implementations, and attractive due to weakened  synchronization requirements across blocks. We prove that \aduca\ attains (near) optimal global  oracle complexity as a function of target error $\epsilon >0,$ scaling with $1/\epsilon$ for monotone operators, or with $\log^2(1/\epsilon)$ for operators that are strongly monotone. 
\end{abstract}

\section{Introduction}\label{sec:intro}

Cyclic block coordinate methods have a long history in optimization and numerical analysis, due to their simplicity and often superior empirical performance compared to alternative approaches. Such methods partition the vector of variables into coordinate blocks (e.g., consisting of a single variable in the case of standard coordinate methods) and update them one variable block at a time, sweeping through all the coordinate blocks in one cycle of computation. Because such sequential strategies often lead to much simpler updates than updating the full vector in parallel, different instantiations of cyclic methods appear in various contexts throughout the history of optimization and numerical analysis; for instance, as the classical methods of Osborne---for matrix balancing \cite{osborne1960pre}, Kaczmarz---for solving linear systems \cite{karczmarz1937angenaherte}, or, more broadly, as general cyclic coordinate methods investigated since the  work of Ortega and Rheinboldt in the 1970s \cite{ortega2000iterative}. For certain problems like matrix balancing and fitting of linear models, cyclic methods are used as the default solvers in mainstream software packages \cite{mazumder2011sparsenet,friedman2010regularization,NumPyEig,JuliaEigen,MathWorksEig,smith2013matrix,RDocumentationBalance,MathWorksBalance}.     

Despite their long history and empirical success, cyclic methods are generally considered challenging to analyze and often exhibit worse theoretical guarantees than their randomized counterparts or even the full vector update baselines \cite{beck2013convergence,sun2019worst,SongDiakonikolas2023CODER}. Additionally, beyond exact cyclic descent for minimization problems, where the objective function is minimized along coordinate directions in each step, setting the step size of cyclic methods requires either a careful tuning or a backtracking line search \cite{SongDiakonikolas2023CODER,beck2013convergence,cai2023cyclic,lin2023accelerated,chow2017cyclic,gurbuzbalaban2017cyclic}. Outside minimization problems, and, in particular, in the settings of min-max optimization and general variational inequalities with monotone operators, exact cyclic descent is known not to be convergent in general (see, e.g., the discussion in \cite{SongDiakonikolas2023CODER}), so the choice of correct step sizes becomes even more important. 

In this work, we propose a new cyclic block coordinate method for generalized variational inequalities (see \Cref{sec:prob-setup} for relevant definitions) that we term Adaptive Delayed-Update Cyclic Algorithm (\aduca). \aduca\ is the first cyclic method addressing general classes of optimization problems that is both \emph{parameter-free} and \emph{locally-adaptive}: namely, the method does not require any knowledge of associated Lipschitz parameters of the problem and it can adjust the step size according to local geometry, without employing a line search\footnote{That is, beyond  employing a line search to set the initial step size. This is primarily a technical device to establish global convergence guarantees, as is standard in the literature; see, e.g.,~\cite{Alacaoglu2023BeyondGR, li2023simple}.}. Such methods are also referred to as being \emph{autoconditioned}; see, e.g., \cite{lan2024auto}. It is worth mentioning that effectively all autoconditioned methods analyzed in the literature perform full vector updates, possibly split over the primal and the dual in the case of min-max optimization; see, e.g.,  \cite{malitsky2020golden,Alacaoglu2023BeyondGR,lan2024auto}. 

A particularly interesting feature of \aduca\ is that it utilizes information delayed by a full cycle; this property is particularly useful for parallelizing updates and avoiding strict synchronization requirements of typical block coordinate methods. Moreover, this property can come at additional performance gains in parallel and distributed settings, as the algorithm updates can be carried over in parallel to communication needed for coordinating the updates (e.g., for communication between the distributed agents and a central server). 

\subsection{Problem Setup}\label{sec:prob-setup}

We consider generalized Minty variational inequality (GMVI) problems, which are of the form:
\begin{align} \label{eq:prob} \tag{P}
  \text{find }\vu^* \in \R^d \quad \text{such that} \quad \innp{\mF(\vu), \vu - \vu^*} + g(\vu) - g(\vu^{*})
  \ge 0, \, \quad \forall 
  \, \vu \in \R^d.
\end{align}
As is standard for such problems in block coordinate settings, we assume that $\mF: \R^d \to \R^d$ is a monotone, locally block-wise Lipschitz operator and $g: \R^d \to (-\infty, + \infty]$ is an extended-valued, proper, convex, lower semicontinuous, 
block-separable function,
with an efficiently computable (block) proximal operator. 

Given an error parameter $\epsilon > 0$, our goal is to find an approximate $\epsilon$-accurate solution to \eqref{eq:prob}, defined as $\vu_{\epsilon}^*$ satisfying:
\begin{equation}\label{eq:probapprox} 
    \text{Gap}(\vu_{\epsilon}^*; \vu) := \innp{\mF(\vu), \vu^*_{\epsilon} - \vu } + g(\vu^*_{\epsilon}) - g(\vu) \le \epsilon, \quad \forall \, \vu \in \R^d. \tag{P\textsubscript{approx}}
\end{equation}

$\text{Gap}(\vu_{\epsilon}^*; \vu)$ is a gap-like function commonly used in the analysis of methods for variational inequalities. By the problem definition \eqref{eq:prob}, it is guaranteed to be non-negative if $\vu_{\epsilon}^*$ (exactly) solves \eqref{eq:prob}. In other cases, it can be either positive, negative, or zero. In full generality, $\text{Gap}(\vu_{\epsilon}^*; \vu)$ is not guaranteed to be bounded, unless additional assumptions like strong convexity or compact domain are imposed on $g.$ In such cases, $\text{Gap}(\vu_{\epsilon}^*; \vu)$ can be bounded over a compact set $S$ containing iterates, following similar considerations as in \cite{nesterov2007}. For our results, $S$ can be chosen as the ball of diameter $c \|\vu_0 - \vu^*\|$ around a(ny) solution $\vu^*$, where $\vu_0$ is the initial point, and $c$ is some constant. This choice is justified by our analysis, which guarantees that all algorithm iterates remain in  $S.$ 

\subsection{Our Contributions}
\label{subsec:contributions}
Our main contributions are:

\textbf{A parameter-free, locally adaptive cyclic method for GMVIs.}
    To our knowledge, \aduca\ is the first cyclic block coordinate method  for GMVIs that is both \emph{parameter-free} and \emph{locally adaptive}: it requires no (global or block-wise) Lipschitz constants and uses no per-epoch line search beyond a one-time initialization. Beyond the special case of exact cyclic descent methods for minimization problems, it is moreover the first general block coordinate method (considered among either cyclic or randomized methods)  that is locally-adaptive and line search-free. This statement applies even when specialized to convex minimization problems. 

\textbf{Delayed-update mechanism for weak synchronization.}
    \aduca\ uses operator information delayed by one full cycle, which enables fully explicit cycle-wise step selection and reduces synchronization requirements across blocks, making the method naturally suitable for parallel/distributed implementations.

\textbf{New adaptive stepsize control tailored to cyclic updates.}
    We design an adaptive stepsize rule based on two computable local Lipschitz surrogates (capturing both consecutive-iterate variation and within-cycle variation), which is essential to control additional errors introduced by cyclic partial updates.

\textbf{Sharp convergence guarantees and global oracle complexity.}
    Under monotonicity and local block-wise Lipschitzness, we prove an $\mathcal{O}(1/\epsilon)$-type ergodic gap bound, matching the optimal rate in~\cite{Nemirovski2004MirrorProx,ouyang2021lower}.
    Under strong convexity (or a restricted strong monotonicity condition), we establish near-linear convergence, with $\log^2(1/\epsilon)$ iteration count scaling with the target error $\epsilon > 0$.

\subsection{Related Work}
\label{subsec:related-work}

Our work builds upon and is closely related to research on variational inequalities, cyclic block coordinate methods, and parameter-free methods. For brevity, we only review the literature most closely related to our work. Additional comparison to most closely related methods (\coder\ \cite{SongDiakonikolas2023CODER} and \graal\ \cite{malitsky2020golden,Alacaoglu2023BeyondGR}) is provided at the end of \Cref{sec:algorithm}, after our algorithm is formally stated.  

\paragraph{Variational inequalities}
For monotone VIs with Lipschitz continuous operators, classical results have established $\mathcal{O}(1/\epsilon)$ ergodic oracle complexity  \cite{Nemirovski2004MirrorProx,korpelevich1976extragradient,nesterov2007}, with improved linear convergence rate ($\mathcal{O}(\log(1/\epsilon)$) oracle complexity) under additional strong monotonicity \cite{nesterov2011solving}. Optimality of these results was established via oracle complexity lower bounds in \cite{ouyang2021lower}. More recent work has focused on locally adaptive methods \cite{malitsky2020golden,Alacaoglu2023BeyondGR}, last-iterate convergence and stationarity guarantees \cite{diakonikolas2020halpern,diakonikolas2021potential,golowich2020last}, and structured non-monotone operators \cite{diakonikolas2021efficient}, among other settings. 

\paragraph{Cyclic block-coordinate methods}
While cyclic block-coordinate methods are classical in optimization, nearly all existing theoretical results concern minimization problems; see e.g., \cite{beck2013convergence,sun2019worst,cai2023cyclic,lin2023accelerated,gurbuzbalaban2017cyclic}. To our knowledge, the only cyclic algorithm that applies to monotone VIs with Lipschitz operators (as in our setting) and attains optimal convergence rates is \coder, due to \cite{SongDiakonikolas2023CODER}. The same work introduced a block-Lipschitz model based on Mahalanobis seminorms that is also used in our work (see \Cref{sec:prelim}). While \cite{SongDiakonikolas2023CODER}  provided a parameter-free version of their method (\texttt{CODER-LineSearch}), this version of the method relies on a line search to determine the step size, and the theoretical results are in terms of global problem properties. 

\paragraph{Parameter-free methods via local smoothness}
A separate line of work develops \emph{parameter-free} first-order methods that avoid global Lipschitz information by estimating local smoothness / curvature between algorithm iterates.
In the VI setting, \cite{malitsky2020golden} introduced \graal: a method employing local Lipschitz estimation  to set stepsizes adaptively, without any per-iteration line search.
Later work \cite{Alacaoglu2023BeyondGR} refined and extended this approach; in particular, they showed that the explicit cap on the step size used in \cite{malitsky2020golden} can be removed, 
and established sublinear convergence under monotonicity as well as convergence guarantees for structured nonmonotone operators defined in \cite{diakonikolas2021efficient}.

Local-curvature adaptivity has also been extended in recent literature to other related optimization settings.
For example, for smooth convex minimization problems, \cite{malitsky2019adaptive} developed an adaptive gradient descent scheme, and subsequent work extended similar ideas to proximal methods~\cite{malitsky2024adaptive,latafat2024adaptive}.
Accelerated variants have been  developed in \cite{li2023simple,suh2025adaptive,borodich2026nesterov}.
In the saddle-point / primal--dual setting, \cite{vladarean2021first} provided a primal-dual first-order method adaptive to local smoothness, while \cite{lan2024auto} further extended such auto-conditioning properties to other classes of primal-dual algorithms.
Beyond standard convex settings, related curvature-adaptive principles have been applied to convex bilevel optimization~\cite{latafat2025convergence} and to nonconvex (weakly convex) and stochastic optimization via projected-gradient methods~\cite{lan2024projected}.

\paragraph{Parameter-free AdaGrad-type methods for VIs}
Finally, there is a separate class of adaptive step-size methods based on AdaGrad, which was originally developed for minimization problems \cite{duchi2011adaptive}. In the context of monotone VIs considered in our work, relevant work includes \cite{bach2019universal,DBLP:conf/aaai/EneNV21,DBLP:conf/aaai/EneN22}.  
The primary limitation of AdaGrad-type adaptivity in this line of work is  that the adaptive step size is non-increasing by design. As a result, such algorithms cannot exploit regions where larger step sizes may be admissible. In deterministic or low-noise regimes, this monotone decay can be overly conservative and lead to slower empirical convergence relative to methods that adapt to \emph{local} geometry via explicit local Lipschitz estimation, as in our work.
 \section{Preliminaries}\label{sec:prelim}
We consider the standard $d$-dimensional Euclidean vector space $(\sR^d, \|\cdot\|),$ where $\|\cdot\|$ denotes the $\ell_2$ norm and $\innp{\cdot, \cdot}$ denotes the inner product of vectors. 
For any positive integer $n$, we denote the set $\{1, 2, \dots, n\}$ by $[n]$. 
We denote $[x]_+ = \max\{0, x\}$ and define $\frac{x}{0} = \infty$. 
We reserve the notation $m$ for the number of blocks, and assume a disjoint partition of the coordinates $[d]$ into nonempty sets $\{\gS^i\}_{i \in [m]}$ of sizes $|\gS^i| = d_i$, where $\sum_{i = 1}^m d_i = d$. 
Without loss of generality, we assume that the partition is ordered, i.e.,\ for $1 \leq i < i' \leq m$, $\max_{j \in \gS^i} j < \min_{j' \in \gS^{i'}} j'$, as permutations of coordinates do not affect our results. 
We let $\vu^i$ and $\mF^i(\cdot)$ be the subvectors of $\vu$ and $\mF(\cdot)$ indexed by the coordinates in $\gS^i$, for $i \in [m]$. 

Given a matrix \(\mA\), we let \(\|\mA\| = \sup\{\|\mA \vu\| : \vu \in \sR^d, \|\vu\| \leq 1\}\) be the standard operator norm.
For a positive (semi)definite matrix $\mA$, $\|\cdot\|_{\mA} $ denotes the Mahalanobis (semi)norm defined by $\|\vu\|_{\mA} = \sqrt{\langle \mA \vu, \vu \rangle}$. 
We use $\mI_d$ to denote the identity matrix of size $d \times d$.
Following prior work on cyclic methods \cite{SongDiakonikolas2023CODER,lin2023accelerated,cai2023cyclic}, given positive semidefinite $d \times d$ matrices $\{\mQ^i\}_{i = 1}^m$, we define matrices $\widehat \mQ^i$ by
\begin{align*}
    (\widehat \mQ^i)_{j,k} =
    \begin{cases} 
    (\mQ^i)_{j,k}, & \text{if } \min\{j,k\} > \sum_{\ell=1}^{i-1} d_\ell, \\
    0, & \text{otherwise}.
    \end{cases}
\end{align*}
That is, $\widehat \mQ^i$ refers to $\mQ^i$ with the first $i - 1$ blocks of rows and columns set to zero.

 \subsection{Problem Setup}
We make the following standard assumptions.
\begin{assumption} \label{assp:monotone-sol}
The operator $\mF: \sR^d \rightarrow \sR^d$ is monotone,
i.e.,\ for any $\vu, \vv \in \sR^d$
\begin{align*}
    \innp{\mF(\vu) - \mF(\vv), \vu - \vv} \geq 0.
\end{align*}
The solution set of~\cref{eq:prob} is nonempty, i.e.,\ there exists at least one $\vu^*$ that solves~\cref{eq:prob}.
\end{assumption}

\begin{assumption}
\label{assp:gConvex}
  The function $g: \sR^d \to (-\infty,\infty]$ is proper, lower semicontinuous, and block-separable, i.e., $g(\vu) = \sum_{i=1}^{m} g^{i}(\vu^{i})$. Given positive diagonal matrices $\{\mLambda_i\}_{i \in [m]}$, where $\mLambda_i \in \R^{d_i \times d_i}$, each $g^i$ is $\mu$-strongly convex with a \emph{possibly unknown} convexity modulus $\mu \geq 0$, 
  i.e.,\ for any $\vu^i, \vv^i$ and any subgradient $\vzeta^i \in \partial g^i(\vv^i)$
  \begin{align*}
      g^i(\vu^i) \geq g^i(\vv^i) + \innp{\vzeta^i, \vu^i - \vv^i} + \frac{\mu}{2} \big\| \vu^i - \vv^i \big\|^2_{\mLambda_i}, 
  \end{align*}
  and admits an efficiently computable proximal operator with parameter $\eta > 0$, i.e.,\
  \begin{align*}
      \mathrm{prox}_{\eta g^i}(\vv^i; \mLambda_i) := \argmin_{\vu^i} \Big\{ g^i(\vu^i) + \frac{1}{2 \eta}\| \vu^i - \vv^i \|^2_{\mLambda_i} \Big\}.
  \end{align*}
\end{assumption}
In Assumption~\ref{assp:gConvex}, for notational convenience, we use  $\mu = 0$ for problems in which $g$ is only convex and $\mF$ is only monotone.  
The role of matrices $\mLambda_i$ will become clear from the definition of local block-wise Lipschitzness of $\mF$; namely, these matrices allow for rescaling of the problem along the coordinate blocks. If matrices $\mLambda_i$ are all identity matrices of appropriate sizes, then there is no rescaling and each norm $\|\cdot\|_{\mLambda_i}$ is the simple $\ell_2$ norm. The (strong) convexity and `prox-friendly' assumptions are stated with respect to $\|\cdot\|_{\mLambda_i}$, which are equivalent to their standard $\ell_2$ counterparts up to the scaling of $\mu$ and $\eta$. Throughout, we use $\mLambda$ to denote the diagonal matrix obtained by concatenating the diagonals of $\mLambda_1, \dots, \mLambda_m$ on the main diagonal, so that for any $\vu \in \sR^d,$ we have $\|\vu\|_{\mLambda}^2 = \sum_{i=1}^m \|\vu^i\|_{\mLambda_i}^2$.

Finally, we assume that block operators $\mF^i$ are \emph{locally Lipschitz}, as stated below. 

\begin{assumption}
\label{assp:localLipschitz}
  Given positive diagonal matrices $\{\mLambda_i\}_{i \in [m]},$ for every compact set $\gC \subseteq \sR^d,$ there exist positive semidefinite matrices $\{\mQ_{\gC}^i\}_{i \in [m]}$ such that each $\mF^{i}(\cdot)$ is $1$-Lipschitz continuous with respect to $\|\cdot\|_{\mQ_\gC^i}$, i.e., for any $\vu, \vv \in \gC$ 
  \begin{equation}\label{eq:lipschitzLocal}
    \|\mF^i(\vu) - \mF^i(\vv)\|_{\mLambda_{i}^{-1}} \leq \| \vu - \vv \|_{\mQ_{\gC}^i}.
  \end{equation}
\end{assumption}
Assumption~\ref{assp:localLipschitz} is a \emph{local} block-wise Lipschitz condition that was previously used as a \emph{global} condition to study fine-grained convergence properties of cyclic methods  \cite{SongDiakonikolas2023CODER,cai2023cyclic,lin2023accelerated}. We emphasize here that matrices $\mQ^i_\gC$ from Assumption \ref{assp:localLipschitz} need not be known to the algorithm---instead, the algorithm's convergence guarantee is w.r.t.\ the best possible such choice of matrices $\mQ^i_\gC$ based on all iterates. Further, existence of such matrices is guaranteed given local Lipschitzness of $\mF,$ since we can take $\mLambda_i = \mI_{d_i}$ and $\mQ_\gC^i = L_\gC \mI_d$ for the local Lipschitz constant $L_\gC > 0$ of the full operator $\mF.$ Similarly, this condition is implied (and most general) among block-wise Lipschitz conditions for variational inequalities studied in the literature. In particular, it is implied by the (local versions of) block-wise Lipschitz assumptions of the form $\|\mF^i(\vu) - \mF^i(\vv)\| \leq L_\gC^i\|\vu - \vv\|$ from prior work on block coordinate methods for VIs \cite{kotsalis2022simple,diakonikolas2025block}, by taking $\mLambda_i = \mI_{d_i}$ and $\mQ_\gC^i = L_\gC^i \mI_d$ for some $L_\gC^i > 0$. Finally, we remark that block-wise Lipschitz assumptions restricted to vectors that differ only on block $i$ common to the literature on block coordinate methods for minimization problems \cite{nesterov2012efficiency,beck2013convergence} are incompatible with general VIs without additional assumptions on $\mF$ (such as block cocoercivity \cite{chow2017cyclic} or uniform boundedness of block operator norms $\|\mF^i\|$ \cite{yousefian2018stochastic}), as remarked in \cite{SongDiakonikolas2023CODER,diakonikolas2025block}. 

\subsection{Review of Standard Facts}

Finally, we list a few standard facts and definitions that are useful to our analysis. 

\begin{fact}\label{fact:reverse-of-young}
    For any $x, y \in \sR$ and any $\alpha > 0,$ we have $(x + y)^2 \geq \frac{\alpha}{1+\alpha} x^2  - \alpha y^2.$
\end{fact}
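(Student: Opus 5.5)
Expanding the left-hand side reduces the claim to a single inequality between quadratics:
\[
(x+y)^2 = x^2 + 2xy + y^2 \;\geq\; \frac{\alpha}{1+\alpha}\,x^2 - \alpha y^2 .
\]
Subtracting the right-hand side, it suffices to show
\[
\frac{1}{1+\alpha}\,x^2 + 2xy + (1+\alpha)\,y^2 \;\geq\; 0 \quad \text{for all } x, y \in \sR .
\]

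The key step is to recognize this as a perfect square. With $a = x/\sqrt{1+\alpha}$ and $b = \sqrt{1+\alpha}\,y$, we have $a^2 = \frac{x^2}{1+\alpha}$, $b^2 = (1+\alpha)y^2$, and $2ab = 2xy$. Hence the expression equals
\[
\Big(\tfrac{x}{\sqrt{1+\alpha}} + \sqrt{1+\alpha}\,y\Big)^2 \;\ge\; 0,
\]
which is exactly the required inequality. Equivalently, this is Young's inequality $2xy \ge -\frac{x^2}{1+\alpha} - (1+\alpha)y^2$ substituted into the expansion.

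There is no real obstacle here. The only point requiring care is choosing the Young weight $1+\alpha$ so that the coefficients $\frac{\alpha}{1+\alpha}$ on $x^2$ and $-\alpha$ on $y^2$ come out exactly, rather than using the weight $\alpha$.
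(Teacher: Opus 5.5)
Your proof is correct: the difference $\frac{x^2}{1+\alpha} + 2xy + (1+\alpha)y^2$ is exactly the square $\big(\frac{x}{\sqrt{1+\alpha}} + \sqrt{1+\alpha}\,y\big)^2$. This is essentially the paper's argument, which applies Young's inequality $(a+b)^2 \le (1+\frac{1}{\alpha})a^2 + (1+\alpha)b^2$ with $a = x+y$, $b=-y$ and rearranges; the slack in that inequality, scaled by $\frac{\alpha}{1+\alpha}$, is the same square, so the two routes differ only in bookkeeping.
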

\begin{proof}
    This is a simple consequence of Young's inequality, by which for any $a, b \in \sR$ and any $\alpha > 0,$ $(a + b)^2 \leq \big( 1+ \frac{1}{\alpha})a^2 + (1+ \alpha)b^2,$
by taking $a = x + y$, $b = - y.$ 
\end{proof}

\begin{fact}\label{fact:quadratic-cvx-comb}
    For any Euclidean norm $\|\cdot\|$, any  $\vx, \vy \in \sR^d$, and any $\alpha > 0,$
\begin{equation}\notag
        \big\| \alpha \vx + (1-\alpha)\vy \big\|^2 = \alpha\|\vx\|^2 + (1-\alpha)\|\vy\|^2 -\alpha(1-\alpha)\|\vx - \vy\|^2. 
    \end{equation}
\end{fact}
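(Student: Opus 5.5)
The identity in \Cref{fact:quadratic-cvx-comb} is purely algebraic, so the plan is a direct expansion. The only structural input is that a Euclidean norm comes from an inner product, $\|\vx\|^2 = \innp{\vx, \vx}$; this is true, for instance, for $\ell_2$ and for any Mahalanobis norm $\|\cdot\|_{\mA}$ with $\mA$ positive definite. Bilinearity and symmetry of $\innp{\cdot,\cdot}$ are then enough to expand both sides and compare coefficients.

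First expand the left-hand side:
\begin{equation}\notag
\big\|\alpha\vx + (1-\alpha)\vy\big\|^2 = \alpha^2\|\vx\|^2 + 2\alpha(1-\alpha)\innp{\vx,\vy} + (1-\alpha)^2\|\vy\|^2.
\end{equation}
Next expand the last term on the right-hand side:
\begin{equation}\notag
-\alpha(1-\alpha)\|\vx-\vy\|^2 = -\alpha(1-\alpha)\|\vx\|^2 + 2\alpha(1-\alpha)\innp{\vx,\vy} - \alpha(1-\alpha)\|\vy\|^2.
\end{equation}
Adding $\alpha\|\vx\|^2 + (1-\alpha)\|\vy\|^2$ to this gives the full right-hand side. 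Its coefficients are as follows:
\begin{itemize}
\item on $\|\vx\|^2$: $\alpha - \alpha(1-\alpha) = \alpha^2$;
\item on $\|\vy\|^2$: $(1-\alpha) - \alpha(1-\alpha) = (1-\alpha)^2$;
\item on $\innp{\vx,\vy}$: $2\alpha(1-\alpha)$.
\end{itemize}
These match the coefficients on the left-hand side, so the two sides are equal.

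There is no real obstacle here. Note that the identity holds for every real $\alpha$, not only $\alpha\in(0,1)$, since no sign information on $\alpha$ was used. The single point needing care is that the expansion requires the norm to be induced by a symmetric bilinear form. That is exactly what ``Euclidean'' guarantees, and it also covers the Mahalanobis norms used later in the paper.
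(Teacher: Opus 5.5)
Your proof is correct and is the standard argument the paper omits: expand both sides through the inner product and match the coefficients $\alpha^2$, $(1-\alpha)^2$, and $2\alpha(1-\alpha)$. Your remarks that the identity holds for every real $\alpha$, and for the Mahalanobis norms $\|\cdot\|_{\mLambda}$ to which the paper later applies it, are also accurate.
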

The proof of this fact is standard and omitted for brevity.

The following fact is a standard consequence of mirror descent-style updates, provided for completeness.
\begin{fact}\label{fact:update-3PI}
    Let $g:\sR^d \to \sR$ be $\mu$-strongly convex w.r.t.\ a norm $\|\cdot\|_{\mLambda}$ with $\mu \geq 0$, let $a > 0$, and let $\mFb, \vu \in \sR^d$. Define \begin{equation}\notag
        \vu^+ = \argmin_{\vv \in \sR^d}\big\{a\innp{\mFb, \vv} + a g(\vv) + \frac{1}{2}\|\vv - \vu\|_{\mLambda}^2\big\}.
    \end{equation}
Then, for any $\vv \in \sR^d,$
\begin{equation}\notag
        a \big(\innp{\mFb, \vu^+ - \vv} + g(\vu^+) - g(\vv)\big) \leq \frac{1}{2}\|\vv - \vu\|_{\mLambda}^2 - \frac{1}{2}\|\vu^+ - \vu\|_{\mLambda}^2 - \frac{a \mu + 1}{2}\| \vv - \vu^+\|_\mLambda^2. 
    \end{equation}
\end{fact}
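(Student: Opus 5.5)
The plan is to combine the first-order optimality condition for $\vu^+$ with strong convexity of $g$, and then to rewrite the resulting inner product using the three-point identity for the Mahalanobis norm $\|\cdot\|_{\mLambda}$.

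First, I record the optimality condition. The objective defining $\vu^+$ is the sum of the convex function $a g$ and a smooth strongly convex quadratic. Hence $\vu^+$ exists, is unique, and satisfies
\[
0 \in a\mFb + a\,\partial g(\vu^+) + \mLambda(\vu^+ - \vu).
\]
So there is a subgradient $\vzeta \in \partial g(\vu^+)$ with $a\vzeta = -a\mFb - \mLambda(\vu^+ - \vu)$.

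Second, I apply strong convexity of $g$ at $\vu^+$ in the direction of an arbitrary $\vv$:
\[
g(\vv) \ge g(\vu^+) + \innp{\vzeta, \vv - \vu^+} + \frac{\mu}{2}\|\vv - \vu^+\|_{\mLambda}^2 .
\]
Multiplying by $a>0$ and substituting $a\vzeta$ from the optimality condition gives
\[
a\big(\innp{\mFb, \vu^+ - \vv} + g(\vu^+) - g(\vv)\big) \le \innp{\mLambda(\vu^+ - \vu), \vv - \vu^+} - \frac{a\mu}{2}\|\vv - \vu^+\|_{\mLambda}^2 .
\]

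Third, I expand the inner product with the polarization/three-point identity for the Euclidean-type norm induced by $\mLambda$:
\[
2\innp{\mLambda(\vu^+ - \vu), \vv - \vu^+} = \|\vv - \vu\|_{\mLambda}^2 - \|\vu^+ - \vu\|_{\mLambda}^2 - \|\vv - \vu^+\|_{\mLambda}^2 .
\]
This identity follows by writing $\vv - \vu = (\vv - \vu^+) + (\vu^+ - \vu)$ and expanding the square. Substituting it into the previous display and merging the two $\|\vv - \vu^+\|_{\mLambda}^2$ terms produces the coefficient $\frac{a\mu + 1}{2}$, which is exactly the claimed inequality.

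None of these steps is a real obstacle; everything is routine. The only point needing care is the subdifferential sum rule in the first step. If $g$ is extended-valued, I justify it using that the quadratic term is finite and differentiable everywhere, so $\partial(ag + q) = a\partial g + \nabla q$ holds. I also need $\vv \in \dom g$ for the inequality to be informative; if $\vv \notin \dom g$, then $g(\vv) = +\infty$ and the claim holds trivially. Existence of a minimizer follows from strong convexity of the objective together with properness and lower semicontinuity of $g$.
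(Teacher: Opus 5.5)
Your proof is correct, but it is organized differently from the paper's. The paper does not split the optimality condition. It sets $M(\vv) := a\innp{\mFb, \vv} + a g(\vv) + \frac{1}{2}\|\vv - \vu\|_{\mLambda}^2$ and observes that $M$ is $(a\mu+1)$-strongly convex w.r.t.\ $\|\cdot\|_{\mLambda}$ (modulus $a\mu$ from $ag$, modulus $1$ from the quadratic). It then invokes quadratic growth at the minimizer, $M(\vv) \geq M(\vu^+) + \frac{a\mu+1}{2}\|\vv - \vu^+\|_{\mLambda}^2$. Expanding $M(\vv) - M(\vu^+)$ yields the terms $\frac{1}{2}\|\vv-\vu\|_{\mLambda}^2 - \frac{1}{2}\|\vu^+-\vu\|_{\mLambda}^2$ directly, with no three-point identity.

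You instead extract a subgradient $\vzeta \in \partial g(\vu^+)$ from the optimality condition and use strong convexity of $g$ alone. You then recover the quadratic terms through the exact three-point identity, including the extra $\frac{1}{2}\|\vv-\vu^+\|_{\mLambda}^2$ that upgrades $a\mu$ to $a\mu+1$.

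The paper's packaging is shorter. Since quadratic growth can be derived from the convex-combination definition of strong convexity (compare $M(\vu^+) \le M((1-t)\vu^+ + t\vv)$ and let $t \downarrow 0$), it can avoid subdifferential calculus altogether. Your version makes the origin of each term explicit and is the standard three-point argument that extends directly to non-Euclidean (Bregman) prox-terms. Its modest cost is the subdifferential sum rule, which you justify correctly.
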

\begin{proof}
    Let $M(\vv) := a\innp{\mFb, \vv} + a g(\vv) + \frac{1}{2}\|\vv - \vu\|_{\mLambda}^2$ so that $\vu^+ = \argmin_{\vv \in \sR^d} M(\vv).$ Since $M$ is $(a \mu + 1)$-strongly convex w.r.t.\ $\|\cdot\|_{\mLambda}$ and  it is minimized by $\vu^+,$ we have that for any $\vv \in \sR^d,$
\begin{equation}\notag
        M(\vv) \geq M(\vu^+) + \frac{a \mu + 1}{2}\|\vv - \vu^+\|_{\mLambda}^2.
    \end{equation}
Plugging in the definition of $M$ and rearranging leads to the claimed inequality.
    \end{proof}
Finally, to bound the restricted gap function $\Gap(\vu; \vv)$ defined in \eqref{eq:probapprox}, we will sequentially bound the individual ``merit'' functions $h_k$ defined by 
\begin{align}\label{eq:potential}
  h_k(\vu) := a_k \big(\innp{\mF(\vu_k), \vu_k - \vu} + g(\vu_k) - g(\vu)\big)
\end{align}
for $\{\vu_k\}_{k \geq 1}$, as is standard for the analysis of VI methods (see, e.g., \cite{Nemirovski2004MirrorProx}). 

In our analysis, we use auxiliary notation $\{\theta_k\}_{k \geq 1}$, which act as weights. These are defined via the accumulated product of $\{\omega_k\}_{k \geq 0}$, where $\omega_k \in (0, 1]$ for all $k \geq 0$:
\begin{align}\label{eq:theta-def}
    \theta_k = 1 / \prod_{i = 0}^{k - 1}\omega_i \; (k \geq 1), \quad \text{ equivalently, } \quad 
    \theta_{k} = \theta_{k - 1} / \omega_{k - 1} \; (k \geq 1, \, \theta_0 = 1).
\end{align}
For completeness, we include the following fact that relates the merit functions to the restricted gap function.
\begin{fact}\label{lemma:gap}
Suppose Assumptions~\ref{assp:monotone-sol}~and~\ref{assp:gConvex} hold. 
Given $K \ge 1$ and $\epsilon > 0$, 
for any sequence of vectors $\{\vu_k\}_{k=1}^K$ in $\sR^d$ and any sequences of positive numbers $\{ a_k\}_{k=1}^K$ and $\{ \theta_k \}_{k=1}^K$, if \,$\frac{\sum_{k=1}^K \theta_k h_k(\vu)}{{\sum_{k=1}^K \theta_k a_k}} \leq \epsilon$
for $\vu \in \dom(g)$, then $\mathrm{Gap}(\hat \vu_K; \vu) \leq \frac{\sum_{k=1}^K  \theta_k h_k(\vu)}{\sum_{k=1}^K \theta_k a_k} \leq \epsilon$, 
where $\hat \vu_K = \frac{\sum_{k=1}^K \theta_k a_k \vu_k}{\sum_{k=1}^K \theta_k a_k}$.
\end{fact}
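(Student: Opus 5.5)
The argument is a direct linearization via monotonicity and convexity, followed by Jensen's inequality applied to the weighted average $\hat\vu_K$. Fix $\vu \in \dom(g)$ and write $A_K := \sum_{k=1}^K \theta_k a_k > 0$, so that the weights $\lambda_k := \theta_k a_k / A_K$ are positive and sum to one. Then $\hat \vu_K = \sum_k \lambda_k \vu_k$. The goal is the pointwise inequality
\begin{equation}\notag
\mathrm{Gap}(\hat\vu_K;\vu) \le \frac{1}{A_K}\sum_{k=1}^K \theta_k h_k(\vu).
\end{equation}
Combined with the hypothesis, this immediately gives the bound by $\epsilon$.

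The first step compares each merit function with the gap evaluated at the fixed operator value $\mF(\vu)$. By monotonicity (Assumption~\ref{assp:monotone-sol}), $\innp{\mF(\vu_k) - \mF(\vu), \vu_k - \vu} \ge 0$, and hence
\begin{equation}\notag
h_k(\vu) \ge a_k\big(\innp{\mF(\vu), \vu_k - \vu} + g(\vu_k) - g(\vu)\big).
\end{equation}
Next, multiply by $\theta_k > 0$, sum over $k$, and divide by $A_K$. The inner-product term is linear in $\vu_k$, so it collapses exactly to $\innp{\mF(\vu), \hat\vu_K - \vu}$. For the $g$-terms, convexity of $g$ (Assumption~\ref{assp:gConvex}, with $\mu \ge 0$) together with Jensen's inequality gives $\sum_k \lambda_k g(\vu_k) \ge g(\hat\vu_K)$. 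Combining these yields $\frac{1}{A_K}\sum_k\theta_k h_k(\vu) \ge \innp{\mF(\vu), \hat\vu_K - \vu} + g(\hat\vu_K) - g(\vu) = \mathrm{Gap}(\hat\vu_K;\vu)$, as desired.

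The only point that needs care is extended-valued $g$. If some $\vu_k \notin \dom(g)$, then $h_k(\vu) = +\infty$, because $g(\vu) < \infty$ for $\vu \in \dom(g)$. In that case the hypothesis (a finite bound $\epsilon$) cannot hold, so we may assume all $\vu_k \in \dom(g)$. Since $g$ is proper, all quantities are then finite, and Jensen's inequality applies on the convex set $\dom(g)$, which also gives $\hat \vu_K \in \dom(g)$. Apart from this bookkeeping there is no real obstacle; the proof consists of the monotonicity step followed by averaging.
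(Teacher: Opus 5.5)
Your proof is correct and follows the same route as the paper. You use monotonicity to replace $\mF(\vu_k)$ by $\mF(\vu)$ in each $h_k(\vu)$, then convexity of $g$ (Jensen with weights $\theta_k a_k/A_K$) to pass from the $\vu_k$ to $\hat\vu_K$; your remark that the hypothesis forces every $\vu_k \in \dom(g)$ is a small detail the paper leaves implicit.
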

\begin{proof}
By the convexity of $g$ and the monotonicity of $\mF$, 
\begin{align*}
    \;& \innp{\mF(\vu), \hat{\vu}_K - \vu} + g(\hat{\vu}_K) - g(\vu) \\
    \leq \;& \frac{1}{\sum_{k=1}^K \theta_k a_k} \sum_{k=1}^K \theta_k a_k \Big(\innp{\mF(\vu), \vu_k - \vu} + g(\vu_k) - g(\vu)\Big) \\
    \leq \;& \frac{1}{\sum_{k=1}^K \theta_k a_k} \sum_{k=1}^K \theta_k a_k \Big(\innp{\mF(\vu_k), \vu_k - \vu} + g(\vu_k) - g(\vu)\Big) 
    =  \frac{\sum_{k=1}^K  \theta_k h_k(\vu)}{\sum_{k=1}^K \theta_k a_k} \leq \epsilon.
\end{align*}
\end{proof}

 \section{\aduca: Adaptive Delayed-Update Cyclic Algorithm}\label{sec:algorithm}

\begin{algorithm}[t]
  \caption{\aduca: Adaptive Delayed-Update Cyclic Algorithm for VI} 
  \label{alg:ADUCA}
  \begin{algorithmic}[1]
    \STATE \textbf{Input:} 
    $m$, $K$, $\vu_0 = \vv_0$, $\beta \in (\frac{\sqrt{5} - 1}{2}, 1)$, $\gamma \in \big(0, 1- {\textstyle \frac{1}{\beta(1+\beta)}} \big)$, $\rho \in \big(1, \frac{1}{\beta}\big)$, $\mLambda$, $\mu \geq 0$
    \STATE \textbf{Initialize:} 
    $\theta_0 = \omega_0 = 1$. Initialize $a_0 = a_{-1}$, $\tilde \mF_0 = \mF(\vu_0)$ using \cref{alg:init-robust}
\STATE $\vu_{1} = \argmin_{\vu}\big\{a_0\innp{\mF(\vu_0), \vu} + a_0 g(\vu) + \frac{1}{2}\|\vu - \vv_0\|^2_{\mLambda} \big\}$
\STATE $\tilde \mF_{1}^{i} = \mF^{i}(\vu^{1}_{1}, \ldots , \vu^{i - 1}_{1}, \vu^{i}_{0}, \ldots , \vu^{m}_{0})$ $(i \in [m])$
\FOR{$k=1$ \TO $K - 1$}

\STATE Set $a_k$ using either the complete step size conditions \eqref{eq:complete-step-size-conditions} or one of the simplified step size conditions \eqref{eq:simple-step-size-conditions-known-mu}, \eqref{eq:simple-step-size-conditions-unknown-mu}, or \eqref{eq:fully-spec-step-size-exp}
\FOR{$i = 1$ \TO $m$}
\STATE $\bar{\mF}^{i}_{k} = \tilde \mF^{i}_{k} 
          + \frac{a_{k-1}\omega_{k - 1}}{a_{k}}\big( \mF^{i}(\vu_{k-1}) - \tilde \mF^{i}_{k-1} \big)$
\STATE $\vv_{k}^{i} = (1 - \beta)\vu_{k}^{i}  + \beta \vv_{k-1}^{i}$
\STATE $\vu^{i}_{k + 1} =  \argmin_{\vu^i}\big\{a_k\innp{ \bar \mF_k^i, \vu^i} + a_kg^i(\vu^i) + \frac{1}{2}\|\vu^i - \vv_k^i\|^2_{\mLambda_i} \big\}$
\STATE $\tilde \mF^{i}_{k+1} = \mF^{i}(\vu^{1}_{k+1}, \ldots , \vu^{i - 1}_{k+1}, \vu^{i}_{k}, \ldots , \vu^{m}_{k})$
        \STATE $\theta_k = \theta_{k-1}/\omega_{k-1}$, $\omega_{k} = \frac{1 + \rho \beta \mu a_{k}}{1 + \mu a_{k}}$
\ENDFOR
    \ENDFOR
    \RETURN $\hat{\vu}_{K} = \sum_{k = 1}^{K} \theta_k a_k \vu_{k} / \sum_{k = 1}^K \theta_k a_k$ or $\vv_{K+1}$
  \end{algorithmic}
\end{algorithm}

Our algorithm is formally stated in~\cref{alg:ADUCA}, and its components---the update scheme and the adaptive step size scheme---are introduced in detail below. 

\subsection{Update scheme}
Our proposed algorithm utilizes cyclic block coordinate updates with extrapolation. For the $i^{\mathrm{th}}$ inner iteration of the $k^{\mathrm{th}}$ cycle, let $\tilde \mF^{i}_{k} := \mF^{i}(\vu^{1}_{k}, \dots ,\vu^{i - 1}_{k}, \allowbreak \vu^{i}_{k - 1}, \dots, \vu^{m}_{k - 1})$ denote the operator of the partially updated and one cycle-delayed iterate $\vu_{k - 1, i} := (\vu^{1}_{k}, \dots ,\vu^{i - 1}_{k}, \allowbreak \vu^{i}_{k - 1}, \dots, \vu^{m}_{k - 1})^\top$. Then, \aduca\ computes the new block iterate $\vu_{k + 1}^i$ by 
\begin{align}
  \bar{\mF}^{i}_{k} = \;& \tilde \mF^{i}_{k} + \frac{a_{k-1}\omega_{k - 1}}{a_{k}}\big(\mF^{i}(\vu_{k - 1}) - \tilde \mF^{i}_{k - 1}\big), \label{eq:barF} \\
  \vv_{k}^{i} = \;& (1 - \beta)\vu_{k}^{i} + \beta\vv_{k - 1}^{i}, \label{eq:vk} \\
  \vu^{i}_{k + 1} = \;& \argmin_{\vu^i}\big\{a_k\innp{ \bar \mF_k^i, \vu^i} + a_k g^i(\vu^i) + \frac{1}{2}\|\vu^i - \vv_k^i\|^2_{\mLambda_i} \big\}, \label{eq:uk}
\end{align}
where $\{a_k\}_{k \geq 0}$ are the adaptive step sizes to be set below, and $\{\omega_k\}_{k \geq 0}$ are introduced to account for possible strong convexity and achieve faster near-linear convergence if it applies.
Note that the minimization step~\cref{eq:uk} is performed with respect to $\|\cdot\|_{\mLambda_i}$, allowing block-wise scaling for the step sizes and complementing cycle-wise adaptiveness. 
Additionally, different from previous block coordinate methods, the algorithm uses the operator values $\mFt_k$ rather than $\mFt_{k + 1}$, which are one cycle \emph{delayed} (hence `\emph{Delayed}' in \aduca), to update $\vu_{k + 1}$ in~\eqref{eq:uk}. This arises from the need to adaptively determine the cycle-wise step sizes at the beginning of each cycle. 
This delayed scheme also yields an appealing feature of \aduca: despite the inherently sequential nature of cyclic updates, the updates can in fact be implemented in parallel. The delay has further implications on practical implementations, as strict synchronization of updates is not required. In Sections~\ref{sec:analysis}~and~\ref{sec:num-exp}, we show that the algorithm exhibits efficient convergence, both theoretically and empirically, despite the delayed updates. 

\subsection{Adaptive step size}
Before introducing the step size scheme, it is helpful to first  define the hyperparameters used by the algorithm. We discuss possible specific choices later in this section. We define 
\begin{align}\label{eq:hyperparameters}
    \beta \in \big( {\textstyle \frac{\sqrt{5} - 1}{2}}, 1 \big), \quad \gamma \in \big(0, 1- {\textstyle \frac{1}{\beta(1+\beta)}} \big), \quad \rho \in \big( 1, {\textstyle \frac{1}{\beta}} \big).
\end{align}
Here, $\beta$ denotes the extrapolation weight in~\cref{eq:vk}, while $\gamma$ and $\rho$, dependent on $\beta$, bound the step size growth. We further define auxiliary notation $\eta := \sqrt{\frac{\gamma(1+\beta)}{1+\beta^2}}$. In the following, we provide a complete set of conditions for the step size update, which can be employed to obtain potentially larger step sizes when the strong convexity parameter $\mu$ is known and positive (e.g., when $g$ is used as an explicit strongly convex regularizer). This set of conditions is also more convenient for carrying out the analysis, as it was, in fact, obtained from the analysis. We then also provide a set of sufficient conditions for the step size update, which are simpler and do not require knowledge of any problem parameters (i.e., do not require knowing $\mu$).

Our step size conditions depend on \emph{local} Lipschitz constants, defined by
\begin{equation}\label{eq:local-Lip-constants}
    \hat{L}_k := \frac{\| \mF(\vu_k) - \tilde \mF_k \|_{\mLambda^{-1}}}{\|\vu_k - \vu_{k-1}\|_{\mLambda}}, \quad 
    L_k := \frac{\| \mF(\vu_k) - \mF(\vu_{{k-1}}) \|_{\mLambda^{-1}}}{\|\vu_k - \vu_{k-1}\|_{\mLambda}}
\end{equation}
The boundedness of $L_k$ follows directly from the standard definition of Lipschitz continuity, with additional flexibility from block-wise scaling via $\mLambda$.
For $\hat{L}_k$, 
by~\cref{assp:localLipschitz}, there exist $\{\mQ_\gC^i\}_{i \in [m]}$\footnote{Here, $\gC$ is a compact set containing the algorithm's iterates, whose existence follows from the boundedness of the iterates in Theorem~\ref{theorem:bound-gap} (Section~\ref{sec:analysis}).} and corresponding $\{\widehat \mQ_\gC^i\}_{i \in [m]}$ such that
\begin{align*}
    \| \mF(\vu_k) - \tilde \mF_k \|_{\mLambda^{-1}}^2 
    \leq \;& \littlesum_{i = 1}^m \| \vu_k - \vu_{k - 1, i} \|_{\mQ_\gC^{i}}^2 
    = \littlesum_{i = 1}^m \| \vu_k - \vu_{k - 1} \|_{\widehat \mQ_\gC^{i}}^2 \\
\leq \;& \big\|\mLambda^{-1/2} \big(\littlesum_{i = 1}^m \widehat \mQ_\gC^{i}\big) \mLambda^{-1/2}\big\| \|\vu_k - \vu_{k - 1}\|_\mLambda^2.
\end{align*}
Thus, in each cycle, $\hat L_k$ locally estimates $\hat L := \big\|\mLambda^{-1/2} \big(\sum_{i = 1}^m \widehat \mQ_\gC^{i}\big) \mLambda^{-1/2}\big\|^{1/2}$, a fine-grained Lipschitz constant characterizing the aggregated progress of cyclic updates~\cite{SongDiakonikolas2023CODER,cai2023cyclic,song2022coordinate,cai2023empirical}. For further details, we refer to the discussion in~\cite{SongDiakonikolas2023CODER}.

\paragraph{A complete set of step size conditions} For $k \geq 0,$ define 
\begin{equation}\label{eq:omega-def}
    \omega_{k} := \frac{1 + \rho \beta \mu a_{k}}{1 + \mu a_{k}}\;\;\text{ and }\;\;\phi_k := \sqrt{\frac{a_k (\beta \mu a_{k-1} + 1)}{\beta a_{k-1}}}. 
\end{equation} 
Let $\tau \in (0, 1)$. A complete set of step size conditions that arise from our analysis and are together sufficient for guaranteeing the claimed algorithm convergence is:
\begin{equation}\label{eq:complete-step-size-conditions}
    \begin{aligned}
        a_k &\leq \min\Big\{\rho a_{k-1},\, \frac{\beta(1+\beta)(1-\gamma)a_{k-1}}{[1- (1-\gamma)\beta^2\mu a_{k-1}]_+},\,  \\
        &\quad\quad\quad\quad\frac{\eta}{2}\min
        \Big\{ 
        \frac{\sqrt{\tau}\, \omega_{k-1}}{\sqrt{3}L_k\sqrt{1+\omega_{k-1}}},\, \frac{\sqrt{(1-\tau)\omega_{k-1}/2}}{\hat{L}_k}
        \Big\}
        \phi_{k-1}\Big\},
        &&\text{ for } k \geq 1,\\
        a_k &\leq \frac{\eta \sqrt{\tau}}{2\sqrt{3}}\frac{\omega_{k-1}\sqrt{\omega_{k-2}}}{\hat{L}_{k-1}\sqrt{1 + \omega_{k-1}}}\phi_{k-2}, &&\text{ for } k \geq 2,\\
        a_k &\leq \frac{\eta\sqrt{\tau}}{2\sqrt{3}}\frac{\omega_{k-1}}{\hat{L}_{k-2}\sqrt{1 + \omega_{k-1}}}\sqrt{\frac{\omega_{k-3}}{\omega_{k-2}}}\frac{a_{k-1}}{a_{k-2}}\phi_{k-3}, &&\text{ for } k \geq 3.
    \end{aligned}
\end{equation}
The conditions on the hyperparameters from \eqref{eq:hyperparameters} ensure that the step size can be increased between iterations, based on the first two conditions in \eqref{eq:complete-step-size-conditions}. The remaining conditions are dependent on the local Lipschitz properties of the monotone operator~$\mF.$ 

\paragraph{Simplified step size conditions}
The step sizes in \eqref{eq:complete-step-size-conditions} can be simplified by (i) defining $\rho_0 := \min\{\rho, \beta(1+\beta)(1-\gamma)\}$ so the first two conditions are implied by $a_k \leq \rho_0 a_{k-1}$ and (ii) arguing that the conditions from the last two inequalities (for $k \geq 2$ and $k \geq 3$) are implied by the first set of inequalities for an appropriate choice of $\tau = \tau(\rho_0)$. We summarize such simplified step sizes in the lemma below, whose proof is deferred to the appendix.

\begin{lemma}[Simplified step sizes]\label{lemma:simplified-steps}
    Let $\rho_0 := \min\{\rho, \beta(1+\beta)(1-\gamma)\}$ and $\tau = \frac{3\rho_0^2(1 + \rho\beta)}{2(\rho\beta)^2 + 3\rho_0^2(1 + \rho\beta)} \in (0, 1).$ Then, either of the step size conditions for $k \geq 1$ stated in \eqref{eq:simple-step-size-conditions-known-mu} and \eqref{eq:simple-step-size-conditions-unknown-mu} implies the complete step size conditions in \eqref{eq:complete-step-size-conditions}. 
\begin{align}
        a_k &\leq \min\Big\{\rho_0 a_{k-1},\,  \frac{\eta}{2}\min\Big\{ \Big( \frac{\sqrt{\tau}\, \omega_{k-1}}{\sqrt{3}L_k\sqrt{1+\omega_{k-1}}},\, \frac{\sqrt{(1-\tau)\omega_{k-1}}}{\sqrt{2}\hat{L}_k}\Big)\Big\}\phi_{k-1}\Big\},\label{eq:simple-step-size-conditions-known-mu}\\
        a_k &\leq \min\Big\{\rho_0 a_{k-1},\,  \frac{\eta}{2\sqrt{\beta}}\min\Big\{ \Big( \frac{\sqrt{\tau}\, \rho\beta}{\sqrt{3}L_k\sqrt{1+\rho\beta}},\, \frac{\sqrt{(1-\tau)\rho\beta}}{\sqrt{2}\hat{L}_k}\Big)\Big\}\sqrt{\frac{a_{k-1}}{a_{k-2}}}\Big\}.\label{eq:simple-step-size-conditions-unknown-mu}
    \end{align}
\end{lemma}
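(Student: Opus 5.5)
The plan is to reduce everything to two elementary observations about the weights: first, $\omega_k \in [\rho\beta, 1]$ for all $k\ge 0$; second, $\phi_{k-1} \ge \sqrt{a_{k-1}/(\beta a_{k-2})}$. The first holds because $\rho\beta<1$ and $t \mapsto (1+\rho\beta t)/(1+t)$ is nonincreasing on $t = \mu a_k \ge 0$, with value $1$ at $t=0$ and limit $\rho\beta$ as $t\to\infty$. The second is immediate from \eqref{eq:omega-def} since $\beta\mu a_{k-2}\ge 0$. I would also note that $\omega \mapsto \omega/\sqrt{1+\omega}$ and $\omega\mapsto\sqrt{\omega}$ are increasing. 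With these facts in hand, I first show that \eqref{eq:simple-step-size-conditions-unknown-mu} implies \eqref{eq:simple-step-size-conditions-known-mu}. Replacing $\rho\beta$ by $\omega_{k-1}\ge\rho\beta$ only enlarges the two terms inside the inner minimum. Replacing $\sqrt{a_{k-1}/a_{k-2}}/\sqrt\beta$ by $\phi_{k-1}$ only enlarges the outer factor. Hence it suffices to show that \eqref{eq:simple-step-size-conditions-known-mu}, imposed for every $k\ge1$, implies \eqref{eq:complete-step-size-conditions}.

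\textbf{First set of inequalities in \eqref{eq:complete-step-size-conditions}.} Since $\rho_0\le\rho$, the bound $a_k\le\rho_0a_{k-1}$ gives $a_k\le\rho a_{k-1}$. For the second bound, the denominator $[1-(1-\gamma)\beta^2\mu a_{k-1}]_+$ lies in $[0,1]$, with the convention $x/0=\infty$. So the second term is at least $\beta(1+\beta)(1-\gamma)a_{k-1}\ge\rho_0a_{k-1}$. The Lipschitz-dependent term of \eqref{eq:complete-step-size-conditions} for $k\ge1$ is literally the second term of \eqref{eq:simple-step-size-conditions-known-mu}.

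\textbf{Condition for $k\ge2$.} I apply \eqref{eq:simple-step-size-conditions-known-mu} at index $k-1$, which gives
$\eta\phi_{k-2}/\hat L_{k-1} \ge 2\sqrt2\, a_{k-1}/\sqrt{(1-\tau)\omega_{k-2}}$.
Substituting this into the right-hand side of the $k\ge2$ condition lower-bounds that side by
$\sqrt{\tfrac{2\tau}{3(1-\tau)}}\,\tfrac{\omega_{k-1}}{\sqrt{1+\omega_{k-1}}}\,a_{k-1}$;
note that the factors $\sqrt{\omega_{k-2}}$ cancel. Since $a_k\le\rho_0a_{k-1}$, and using the monotonicity of $\omega/\sqrt{1+\omega}$ together with $\omega_{k-1}\ge\rho\beta$, it suffices to have
$\rho_0 \le \sqrt{\tfrac{2\tau}{3(1-\tau)}}\,\tfrac{\rho\beta}{\sqrt{1+\rho\beta}}$.
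This is equivalent to $\tfrac{\tau}{1-\tau}\ge\tfrac{3\rho_0^2(1+\rho\beta)}{2(\rho\beta)^2}$, and the stated choice of $\tau$ gives exactly equality. The same formula shows $\tau\in(0,1)$.

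\textbf{Condition for $k\ge3$.} I apply \eqref{eq:simple-step-size-conditions-known-mu} at index $k-2$, which gives
$\phi_{k-3}/\hat L_{k-2}\ge 2\sqrt2\,a_{k-2}/(\eta\sqrt{(1-\tau)\omega_{k-3}})$.
Plugging this in, the factors $a_{k-2}$ and $\sqrt{\omega_{k-3}}$ cancel, and the right-hand side is lower-bounded by
$\sqrt{\tfrac{2\tau}{3(1-\tau)}}\tfrac{\omega_{k-1}}{\sqrt{1+\omega_{k-1}}}\tfrac{a_{k-1}}{\sqrt{\omega_{k-2}}}$.
Because $\omega_{k-2}\le1$, this is at least the bound obtained in the $k\ge 2$ step. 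So the same choice of $\tau$ finishes the argument.

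The main obstacle is bookkeeping rather than ideas. I need to match the exact constants, including the factor $\sqrt2$ hidden in $\sqrt{(1-\tau)\omega/2}$ versus $\sqrt{1-\tau}\sqrt{\omega}/\sqrt2$. I also need to handle the degenerate cases $\hat L_j=0$ or $L_j=0$, where the bounds are $+\infty$ and are trivially satisfied. Finally, the index ranges must be correct: the $k\ge2$ and $k\ge3$ conditions invoke \eqref{eq:simple-step-size-conditions-known-mu} only at indices $k-1\ge1$ and $k-2\ge1$, where it is assumed.
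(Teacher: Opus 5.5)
Your proof is correct and follows essentially the same argument as the paper. You invoke the $\hat L$-condition at indices $k-1$ and $k-2$ to lower-bound $\phi_{k-2}/\hat L_{k-1}$ and $\phi_{k-3}/\hat L_{k-2}$. With the stated $\tau$, this reduces the $k\ge2$ and $k\ge3$ conditions to $a_k\le\rho_0 a_{k-1}$, and the unknown-$\mu$ rule then follows from $\omega_{k-1}\ge\rho\beta$ and $\phi_{k-1}\ge\sqrt{a_{k-1}/(\beta a_{k-2})}$; you are also more explicit than the paper about the $[\cdot]_+$ denominator and the index ranges.
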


For example, one could choose $\rho = 1.2, \, \beta=0.8, \, \gamma=0.2$ for the algorithm parameters (which can be verified to satisfy all the stated constraints; observe that $\tau$ and $\eta$ are computable based on these parameters) and obtain fully specified step size update that does not require any further parameter selection. Such a choice leads to the following simple and fully specified step size rule:
\begin{equation}\label{eq:fully-spec-step-size-exp}
    a_k = \min\Big\{1.15\,a_{k-1},\, \min\Big\{\frac{0.093}{L_k},\, \frac{0.079}{\hat{L}_k}\Big\}\sqrt{\frac{a_{k-1}}{a_{k-2}}}\Big\}. 
\end{equation}

\subsection{Additional Remarks}

Even though the step sizes can be made fully independent of any global problem parameters (including $\mu$; see \eqref{eq:fully-spec-step-size-exp}), the algorithm itself, as specified, requires knowledge of $\mu$ to update $\omega_k$, which appears in the update for the extrapolated operator $\mFb_k$. This might appear to undermine the adaptivity of \aduca\ when $\mu>0$. However, $\{\omega_k\}_{k \geq 0}$ was introduced only to facilitate the near-linear rate convergence analysis and is not essential to the practical implementation. In general, since $\mu$ is used solely to certify a near-linear rate, it can be replaced throughout the analysis (and hence in the definition of $\omega_k$) by any conservative lower bound $\underline\mu\in(0,\mu]$, in which case the strong convexity assumption~\cref{assp:gConvex} and~\cref{fact:update-3PI} remain valid with $\underline\mu$. Using such a lower bound may weaken the theoretical rate constant, but the effect is  mild because the factor $\frac{1+\rho\beta\mu a_k}{1+\mu a_k}$ always satisfies $\rho\beta<\frac{1+\rho\beta\mu a_k}{1+\mu a_k}<1$ for $\mu a_k>0$, and in typical parameter choices $\rho\beta$ is close to $1$; for instance, the values used in our experiments, $\rho=1.2$ and $\beta=0.8$, yield $\rho\beta=0.96$. The empirical impact of using a conservative lower bound is further examined in the ablation study in~\Cref{subsec:ablation}, which shows that the algorithm performance is insensitive to the choice of $\mu;$ even setting $\mu = 0$ still leads to convergence that is indistinguishable from runs with higher values of $\mu$. We conjecture that this is not a coincidence and that the algorithm can, in fact, be proven to converge linearly even if $\mu$ is set to zero. This would however come at a cost of higher complexity in an  already quite technical analysis, thus we defer such considerations to future work.  

\begin{figure}[t]
    \hspace*{\fill}\subfloat[\texttt{a9a}]{\includegraphics[width=0.33\textwidth]{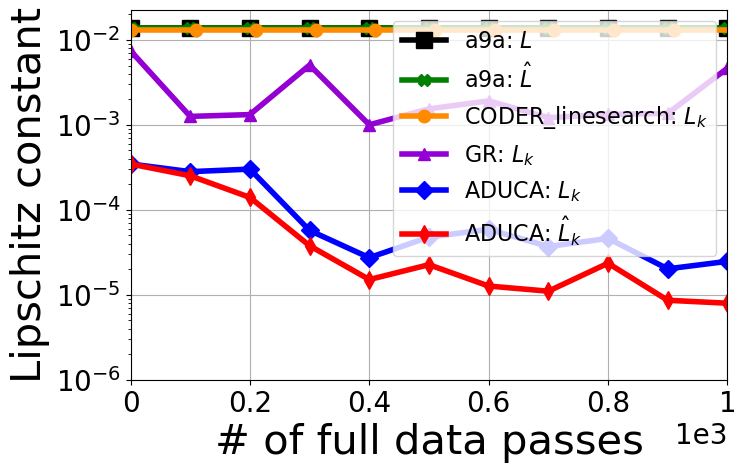}\label{fig:L_a9a}}\hfill
    \subfloat[\texttt{gisette}]{\includegraphics[width=0.33\textwidth]{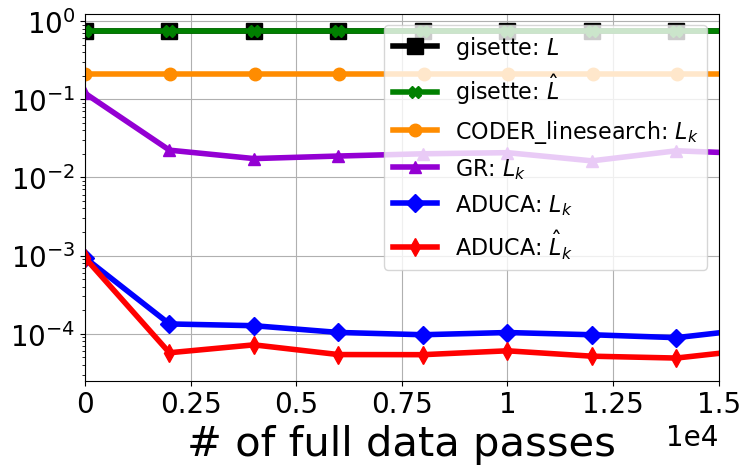}\label{fig:L_gisette}}
    \hspace*{\fill}
    \subfloat[\texttt{SUSY}]{\includegraphics[width=0.33\textwidth]{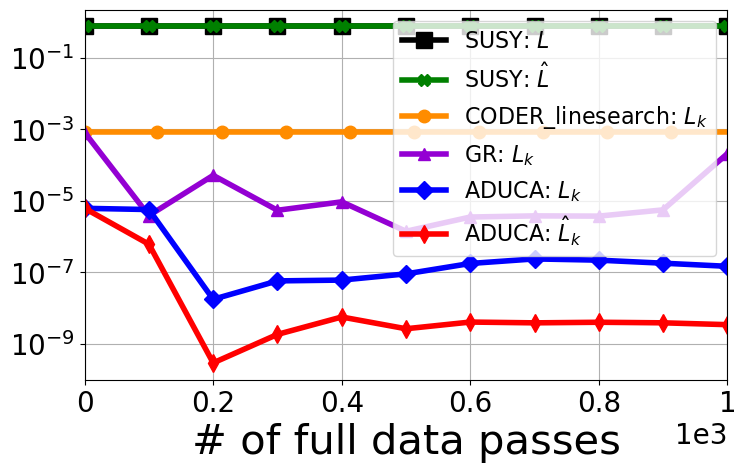}\label{fig:L_SUSY}}
    \hspace*{\fill}
    \caption{Comparisons of global Lipschitz constants and local Lipschitz estimates used by different algorithms, on the Support Vector Machine problem with datasets \texttt{a9a}, \texttt{gisette}, and \texttt{SUSY}. See Section~\ref{subsec:svm} for further details. }
    \label{fig:L}
\end{figure}

We now provide illustrative numerical results to provide intuition on why adaptive estimates $\hat L_k$ and $L_k$ can lead to faster convergence of \aduca\ compared to other related methods. In~\cref{fig:L}, we compare different Lipschitz estimates on the regularized support vector machine problem; see details in~\cref{sec:num-exp}. Here, $L$ is the standard, global Lipschitz constant, $\hat{L}$ and $\{ L_k \}_{k \ge 1}$ of \texttt{CODER-LineSearch} are the global block-wise Lipschitz constant w.r.t.\ Mahalanobis norm and the estimated Lipschitz constants via line search, respectively, defined in~\cite{SongDiakonikolas2023CODER}; $\{ L_k \}_{k \ge 1}$ of \graal\ is the local Lipschitz estimation w.r.t.\ standard Euclidean norm defined in~\cite{malitsky2020golden}. On all three datasets, $\hat L_k$ and $L_k$ constructed by \aduca\ provide the tightest Lipschitz estimates. 

Finally, while \aduca\ is conceptually inspired by two prior lines of work---a cyclic coordinate algorithm \coder~\cite{SongDiakonikolas2023CODER} and the adaptive full-operator algorithm  \graal~\cite{malitsky2020golden,Alacaoglu2023BeyondGR}---the resulting algorithm is different and does not lead to either of these two algorithms as a special case. Specifically, when $m = 1$, ignoring the cyclic updates and comparing to \graal, \aduca\ uses operator information delayed by one iteration. On the other hand, compared to \coder, \aduca\ includes an additional iterate extrapolation step~\cref{eq:vk} and does not rely on dual averaging.
Regarding the locally adaptive step sizes, compared to \graal, our scheme introduces an additional upper bound with respect to $\hat{L}_k$ in every epoch in the stepsize selection condition to account for cyclic updates, distinguishing it from~\cite{malitsky2020golden}.
In the strongly convex case, our scheme~\cref{eq:complete-step-size-conditions} and the corresponding analysis differ from~\cite{malitsky2020golden}, and directly lead to near-linear convergence without requiring a lower bound on the step sizes as in~\cite{malitsky2020golden}. Finally, to control errors from the cyclic updates, the possible geometric growth of stepsizes here is slightly smaller than the ones  in~\cite{malitsky2019adaptive,Alacaoglu2023BeyondGR}, with a smaller $\rho_0$ imposed on it.
 \section{Convergence Analysis}\label{sec:analysis}

In this section, {we} carry out the convergence analysis of \aduca\ by bounding the individual merit functions $h_k(\vu),$ as mentioned before. While bounding similar ``gap-like'' functions is standard (and one could argue natural, since it directly translates into the target error guarantee), the specific argument carried out in bounding $h_k(\vu)$ is, of course, specific to this work. The first step in the analysis is to decompose the merit function, as stated below:
\begin{equation}\label{eq:merit-decomposition}
    h_k(\vu) = a_k\innp{\mF(\vu_k) - \mFb_k, \vu_k - \vu} + a_k\big(\innp{\mFb_k, \vu_k - \vu} + g(\vu_k) - g(\vu)\big).
\end{equation}
The decomposition is based on replacing $\mF(\vu_k)$ with the extrapolated term {$\mFb_k$} and controlling the introduced ``error'' $h_k^{(1)}(\vu):= a_k\innp{\mF(\vu_k) - \mFb_k, \vu_k - \vu}.$ In particular, similar to prior work utilizing extrapolated operators \cite{kotsalis2022simple,SongDiakonikolas2023CODER,diakonikolas2025block}, we split $h_k^{(1)}(\vu)$ into telescoping inner product terms and an error term that we later cancel out (Lemma~\ref{lemma:op-correction}). The remaining term 
$h_k^{(2)}(\vu):= a_k\big(\innp{\mFb_k, \vu_k - \vu} + g(\vu_k) - g(\vu)\big)$
on the right-hand side of \eqref{eq:merit-decomposition} is controlled by the algorithm update and we show {it} contains negative quadratic distance terms  scaling with $\|\vu_{k+1} - \vu_k\|_{\mLambda}^2$, which are used in controlling any introduced ``error'' terms (Lemma~\ref{lemma:update-merit-error}). 

Before delving into the technical details, we describe the rationale behind the different algorithmic choices in our algorithm. First, the operator extrapolation $\mFb_k$ is used to ensure that the updates to $\vu_{k+1}$ indeed correspond to cyclic block coordinate updates. Here, the delayed nature of $\mFb_k$ is essential to ensuring that we can select step size parameters $a_k$ \emph{adaptively} to local geometry; we comment on this more as  we carry out the analysis below. The update $\vu_{k+1}$ resembles the \graal\ update from \cite{malitsky2020golden}, but with the crucial difference that the full operator $\mF(\vu_k)$ is replaced by the extrapolated one {with cyclic block updates}---$\mFb_k$. While this may appear as a small difference, it nevertheless has a substantial impact on the analysis, leading to more challenges in setting the conditions on the step size update and requiring control of squared distance terms $\|\vu_{k+1} - \vu_k\|_{\mLambda}^2$  not only over two consecutive iterations (as commonly happens for similar algorithms \cite{malitsky2020golden,SongDiakonikolas2023CODER,diakonikolas2025block,kotsalis2022simple}), but over \emph{four} consecutive iterations. 

We begin now with a simple lemma that bounds $h_k^{(1)}(\vu).$

\begin{lemma}\label{lemma:op-correction}
    Let $h_k^{(1)}(\vu):= a_k\innp{\mF(\vu_k) - \mFb_k, \vu_k - \vu},$ where, as in \eqref{eq:barF}, $\mFb_k = \mFt_k + \frac{a_{k-1}\omega_{k-1}}{a_k}(\mF(\vu_{k-1}) - \mFt_{k-1})$. Then:
\begin{equation}\notag
        \begin{aligned}
            h_k^{(1)}(\vu) =\;& a_k\innp{\mF(\vu_k) - \mFt_k, \vu_{k+1} - \vu} - a_{k-1} \omega_{k-1}\innp{\mF(\vu_{k-1}) - \mFt_{k-1}, \vu_{k} - \vu}\\
            &+ a_k\innp{\mF(\vu_k) - \mFt_k, \vu_{k} - \vu_{k+1}}.
        \end{aligned}
    \end{equation}
\end{lemma}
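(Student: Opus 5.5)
This identity is purely algebraic, so the plan is to substitute the definition of $\mFb_k$ from \eqref{eq:barF} and regroup terms. No assumptions on $\mF$, $g$, or the step sizes are needed beyond $a_k > 0$, which makes the division by $a_k$ in \eqref{eq:barF} legitimate.

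First, write
\[
\mF(\vu_k) - \mFb_k = \big(\mF(\vu_k) - \mFt_k\big) - \frac{a_{k-1}\omega_{k-1}}{a_k}\big(\mF(\vu_{k-1}) - \mFt_{k-1}\big).
\]
Multiplying by $a_k$ and pairing with $\vu_k - \vu$ gives
\[
h_k^{(1)}(\vu) = a_k\innp{\mF(\vu_k) - \mFt_k, \vu_k - \vu} - a_{k-1}\omega_{k-1}\innp{\mF(\vu_{k-1}) - \mFt_{k-1}, \vu_k - \vu}.
\]
The second term is already the second term of the claimed expression. Here the factor $a_k$ cancels exactly against the $1/a_k$ in the extrapolation coefficient; this cancellation is the reason the coefficient in \eqref{eq:barF} was chosen in that form.

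Second, split the first inner product by inserting $\vu_{k+1}$:
\[
\vu_k - \vu = (\vu_{k+1} - \vu) + (\vu_k - \vu_{k+1}).
\]
By bilinearity,
\[
a_k\innp{\mF(\vu_k) - \mFt_k, \vu_k - \vu} = a_k\innp{\mF(\vu_k) - \mFt_k, \vu_{k+1} - \vu} + a_k\innp{\mF(\vu_k) - \mFt_k, \vu_k - \vu_{k+1}}.
\]
Combining the two displays yields exactly the three terms of the statement.

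I do not expect any genuine obstacle. The only care points are sign bookkeeping in the insertion of $\vu_{k+1}$, and noting that $\mFt_k$ and $\mFt_{k-1}$ are full vectors assembled block-wise, so the inner products are the standard ones on $\sR^d$. The $k=1$ case, which involves $\mFt_0 = \mF(\vu_0)$, is handled identically. The purpose of the rewriting is that the first two terms telescope across $k$ once multiplied by the weights $\theta_k$, since $\theta_k\omega_{k-1} = \theta_{k-1}$ by \eqref{eq:theta-def}.
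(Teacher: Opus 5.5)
Your proposal is correct and follows the paper's proof. The paper likewise substitutes the definition of $\mFb_k$, so that the factor $a_k$ cancels the $1/a_k$ in the extrapolation coefficient. It then adds and subtracts $a_k\innp{\mF(\vu_k) - \mFt_k, \vu_k - \vu_{k+1}}$, which is exactly your splitting of $\vu_k - \vu$ through $\vu_{k+1}$.
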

\begin{proof}
    Plugging in the definition of $\mFb_k,$ we get  $h_k^{(1)}(\vu) = a_k\innp{\mF(\vu_k) - \mFt_k, \vu_{k} - \vu} - a_{k-1} \omega_{k-1}\innp{\mF(\vu_{k-1}) - \mFt_{k-1}, \vu_{k} - \vu}$. To complete the proof, it remains to add and subtract $a_k\innp{\mF(\vu_k) - \mFt_k, \vu_{k} - \vu_{k+1}},$ and simplify. 
\end{proof}
Observe here that when $h_k^{(1)}(\vu)$ is multiplied by $\theta_k,$ the first two terms on the right-hand side in Lemma \ref{lemma:op-correction} telescope, as $\theta_{k} \omega_{k-1} = \theta_{k-1},$ by the definition of $\theta_k.$ Thus, only the third inner product term $a_k\innp{\mF(\vu_k) - \mFt_k, \vu_{k} - \vu_{k+1}}$ will need to be controlled by the algorithm update. Here, it is crucial that $\mFt_k$ does not depend on the information from the current cycle, since it ensures that $\|\mF(\vu_k) - \mFt_k\|_{\mLambda^{-1}}$ is a function of local Lipschitzness between points from the previous cycle and so it is available to the algorithm at the time the step size $a_k$ is being set.

A more technical part of the proof concerns appropriately bounding $h_k^{(2)}(\vu) := a_k\big(\innp{\mFb_k, \vu_k - \vu} + g(\vu_k) - g(\vu)\big)$. Here, the inspiration comes from \graal-type algorithms \cite{Alacaoglu2023BeyondGR,malitsky2020golden}, which leverage a conveniently chosen sequence of prox-centers $\vv_k$ and the property of Euclidean norms stated in Fact~\ref{fact:quadratic-cvx-comb}. Our analysis is, however, different, as it  directly bounds the merit function and is adapted to our operator extrapolation. 

\begin{lemma}\label{lemma:update-merit-error}
    Consider the iterates $\vu_k,$ $k \geq 0,$ of Algorithm \ref{alg:ADUCA}. Then, for any $\vu \in \dom(g),$ we have that
\begin{equation}\notag
        \begin{aligned}
            h_k^{(2)}(\vu) \leq\; & 
             \frac{1}{2(1 - \beta)}\Big((1 + \beta \mu a_k)\|\vu - \vv_k\|_{\mLambda}^2 - (1 + \mu a_k)\|\vu - \vv_{k + 1}\|_{\mLambda}^2\Big)\\
            &+ \frac{\gamma \beta^2(1+\beta)}{2(1+\beta^2)}\big(\omega_{k-1}\|\vu_{k} - \vv_{k-1}\|_{\mLambda}^2 - \|\vu_{k + 1} - \vv_k\|_{\mLambda}^2\big)\\
            &+a_k\innp{\mFb_k - \mFb_{k - 1}, \vu_k - \vu_{k + 1}}\\
            &- \Big(\frac{\gamma{\omega_{k-1}}(1+\beta)}{2(1+\omega_{k-1})(1+\beta^2)} + \frac{a_k(1 + \beta \mu a_{k-1})}{2\beta a_{k - 1}}\Big)\|\vu_{k + 1} - \vu_k\|_{\mLambda}^2. 
        \end{aligned}
    \end{equation}
\end{lemma}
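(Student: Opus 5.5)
We bound $h_k^{(2)}(\vu)$ in the style of \graal, using two instances of \Cref{fact:update-3PI}. The extra $\mu$-dependent factors are handled by \Cref{fact:quadratic-cvx-comb} applied to the averaging step \eqref{eq:vk}.

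\textbf{Step 1: the update at cycle $k$.} Apply \Cref{fact:update-3PI} to \eqref{eq:uk}, summed over blocks via block-separability of $g$ and $\|\cdot\|_\mLambda$. The prox-center is $\vv_k$, the output is $\vu_{k+1}$, and the comparison point is $\vu$. This gives
\begin{equation}\notag
a_k\big(\innp{\mFb_k, \vu_{k+1}-\vu} + g(\vu_{k+1}) - g(\vu)\big) \le \tfrac12\|\vu-\vv_k\|_\mLambda^2 - \tfrac12\|\vu_{k+1}-\vv_k\|_\mLambda^2 - \tfrac{1+\mu a_k}{2}\|\vu-\vu_{k+1}\|_\mLambda^2 .
\end{equation}

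\textbf{Step 2: the update at cycle $k-1$.} Apply the same fact to the update of cycle $k-1$ (prox-center $\vv_{k-1}$, output $\vu_k$), now with comparison point $\vu_{k+1}$. Multiply the result by $a_k/a_{k-1}$. This yields an upper bound on
\[
a_k\big(\innp{\mFb_{k-1}, \vu_k-\vu_{k+1}} + g(\vu_k)-g(\vu_{k+1})\big)
\]
of the form
\[
\tfrac{a_k}{2a_{k-1}}\big(\|\vu_{k+1}-\vv_{k-1}\|^2 - \|\vu_k-\vv_{k-1}\|^2 - (1+\mu a_{k-1})\|\vu_{k+1}-\vu_k\|^2\big).
\]
Adding Steps 1 and 2 reconstructs $h_k^{(2)}(\vu)$ up to the cross term $a_k\innp{\mFb_k-\mFb_{k-1},\vu_k-\vu_{k+1}}$, which stays in the bound as stated.

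\textbf{Step 3: rewrite in terms of the $\vv$ sequence.} Use \eqref{eq:vk} in the forms $\vu_{k+1} = (\vv_{k+1}-\beta\vv_k)/(1-\beta)$ and $\vu_k-\vv_{k-1} = (\vv_k-\vv_{k-1})/(1-\beta)$. Then \Cref{fact:quadratic-cvx-comb} gives
\[
\|\vu-\vv_{k+1}\|^2 = (1-\beta)\|\vu-\vu_{k+1}\|^2 + \beta\|\vu-\vv_k\|^2 - \beta(1-\beta)\|\vu_{k+1}-\vv_k\|^2 .
\]
Solving for $\|\vu-\vu_{k+1}\|^2$ and substituting produces the leading term
\[
\tfrac{1}{2(1-\beta)}\big((1+\beta\mu a_k)\|\vu-\vv_k\|^2 - (1+\mu a_k)\|\vu-\vv_{k+1}\|^2\big),
\]
since $1 - \tfrac{\beta(1+\mu a_k)}{1-\beta}\cdot(1-\beta)\ldots$ collapses to $\frac{1+\beta\mu a_k}{1-\beta}$ after the bookkeeping. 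It also produces a positive multiple of $\|\vu_{k+1}-\vv_k\|^2$.

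Similarly, write $\vu_{k+1}-\vv_{k-1} = (\vu_{k+1}-\vv_k) + \tfrac{1-\beta}{\beta}\ldots$. More precisely, use $\vv_k - \vv_{k-1} = (1-\beta)(\vu_k-\vv_{k-1})$ so that $\vu_{k+1}-\vv_{k-1}$ is a combination of $\vu_{k+1}-\vu_k$ and $\vu_k - \vv_{k-1}$. Then \Cref{fact:quadratic-cvx-comb} or \Cref{fact:reverse-of-young} expresses the Step 2 terms through $\|\vu_{k+1}-\vv_k\|^2$, $\|\vu_k-\vv_{k-1}\|^2$ and $\|\vu_{k+1}-\vu_k\|^2$.

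\textbf{Step 4 (main obstacle): coefficient matching.} All quadratic terms must be matched against the target coefficients. The first step-size condition in \eqref{eq:complete-step-size-conditions}, $a_k \le \beta(1+\beta)(1-\gamma)a_{k-1}/[1-(1-\gamma)\beta^2\mu a_{k-1}]_+$, is exactly what makes the net coefficient of $\|\vu_{k+1}-\vv_k\|^2$ at most $-\frac{\gamma\beta^2(1+\beta)}{2(1+\beta^2)}$. The $\|\vu_k-\vv_{k-1}\|^2$ coefficient must then be bounded by $+\frac{\gamma\beta^2(1+\beta)}{2(1+\beta^2)}\omega_{k-1}$. The negative term in $\|\vu_k-\vv_{k-1}\|^2$ from Step 2 is used here, together with $a_k\le\rho a_{k-1}$ and $\omega_{k-1}\ge\rho\beta$.

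The residual $\|\vu_{k+1}-\vu_k\|^2$ coefficient has two sources. One is $\frac{a_k(1+\beta\mu a_{k-1})}{2\beta a_{k-1}}$, from Step 2 combined with the rewriting. The other is $\frac{\gamma\omega_{k-1}(1+\beta)}{2(1+\omega_{k-1})(1+\beta^2)}$, obtained by applying \Cref{fact:reverse-of-young} with $\alpha = \omega_{k-1}$ to split the $\gamma$-weighted part of $\|\vu_{k+1}-\vv_{k-1}\|^2$. This is the delicate part: one must choose the splitting weight $\alpha$ and verify the resulting scalar inequality in $\beta,\gamma,\mu a_{k-1}, a_k/a_{k-1}$. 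I would first check the case $\mu=0$ to fix the identity, then verify that the $\mu$ terms only help.
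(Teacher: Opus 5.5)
Your Steps 1--3 are the paper's argument. You use \Cref{fact:update-3PI} twice (cycle $k$ with comparison point $\vu$; cycle $k-1$ with comparison point $\vu_{k+1}$, scaled by $a_k/a_{k-1}$), then the two exact identities from \Cref{fact:quadratic-cvx-comb}. Two corrections are needed there.

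First, substituting for $\|\vu-\vu_{k+1}\|_{\mLambda}^2$ in Step 1 contributes a \emph{negative} multiple of $\|\vu_{k+1}-\vv_k\|_{\mLambda}^2$, for a total of $-\frac{1+\beta+\beta\mu a_k}{2}$. The positive part $\frac{a_k}{2\beta a_{k-1}}$ comes from Step 2.

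Second, Step 2 needs the exact identity obtained from $\vu_{k+1}-\vv_k=(1-\beta)(\vu_{k+1}-\vu_k)+\beta(\vu_{k+1}-\vv_{k-1})$:
\[
\|\vu_{k+1}-\vv_{k-1}\|_{\mLambda}^2=\tfrac{1}{\beta}\|\vu_{k+1}-\vv_k\|_{\mLambda}^2-\tfrac{1-\beta}{\beta}\|\vu_{k+1}-\vu_k\|_{\mLambda}^2+(1-\beta)\|\vu_k-\vv_{k-1}\|_{\mLambda}^2 .
\]
\Cref{fact:reverse-of-young} only gives a lower bound, which is useless for this positively weighted term. With the identity, the coefficients are:
\begin{itemize}
\item on $\|\vu_{k+1}-\vu_k\|_{\mLambda}^2$: exactly $-\frac{a_k(1+\beta\mu a_{k-1})}{2\beta a_{k-1}}$;
\item on $\|\vu_{k+1}-\vv_k\|_{\mLambda}^2$: $\frac{a_k}{2\beta a_{k-1}}-\frac{1+\beta+\beta\mu a_k}{2}$;
\item on $\|\vu_k-\vv_{k-1}\|_{\mLambda}^2$: $-\frac{\beta a_k}{2a_{k-1}}\le 0$, which is simply dropped.
\end{itemize}
So $a_k\le\rho a_{k-1}$ and $\omega_{k-1}\ge\rho\beta$ play no role here, contrary to your Step 4.

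The genuine gap is Step 4. You leave it unverified, and the mechanism you describe would not work.

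The only source of $\gamma$ is the step-size condition you cite. It gives
\[
\frac{a_k}{2\beta a_{k-1}}-\frac{1+\beta+\beta\mu a_k}{2}\le-\gamma\frac{1+\beta+\beta\mu a_k}{2}\le-\frac{\gamma(1+\beta)}{2}.
\]
You need all of this slack, not only the target value $-\frac{\gamma\beta^2(1+\beta)}{2(1+\beta^2)}$. Both the negative $\gamma$-term on $\|\vu_{k+1}-\vu_k\|_{\mLambda}^2$ and the positive $\gamma$-term on $\omega_{k-1}\|\vu_k-\vv_{k-1}\|_{\mLambda}^2$ must be generated from the remainder.

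There is no ``$\gamma$-weighted part of $\|\vu_{k+1}-\vv_{k-1}\|_{\mLambda}^2$'' to split. That term carries a positive coefficient, so lower-bounding it goes the wrong way. Its natural decomposition $(\vu_{k+1}-\vu_k)+(\vu_k-\vv_{k-1})$ also has no factor $\beta$, so it could never produce the $\beta^2$ in the target.

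The missing step is as follows. Split $-\frac{\gamma(1+\beta)}{2}\|\vu_{k+1}-\vv_k\|_{\mLambda}^2$ with weights $\frac{\beta^2}{1+\beta^2}$ and $\frac{1}{1+\beta^2}$, and keep the first piece as is. In the second piece, use $\vu_k-\vv_k=\beta(\vu_k-\vv_{k-1})$, i.e.\ $\vu_{k+1}-\vv_k=(\vu_{k+1}-\vu_k)+\beta(\vu_k-\vv_{k-1})$, together with \Cref{fact:reverse-of-young} at $\alpha=\omega_{k-1}$:
\[
\|\vu_{k+1}-\vv_k\|_{\mLambda}^2\ge\frac{\omega_{k-1}}{1+\omega_{k-1}}\|\vu_{k+1}-\vu_k\|_{\mLambda}^2-\beta^2\omega_{k-1}\|\vu_k-\vv_{k-1}\|_{\mLambda}^2 .
\]
This yields exactly the second and fourth lines of the claimed bound. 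No further scalar inequality in $\beta,\gamma,\mu a_{k-1},a_k/a_{k-1}$ is needed, and no separate $\mu=0$ check.
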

\begin{proof}
    Observe that we can decompose $h_k^{(2)}(\vu)$ as 
\begin{equation}\label{eq:hk-2-decomposition}
        \begin{aligned}
            h_k^{(2)}(\vu) =\; &  {a_k \big(\innp{\mFb_{k}, \vu_{k+1} - \vu} + g(\vu_{k+1}) - g(\vu)\big)} \\ 
  & + {\frac{a_{k}}{a_{k - 1}}a_{k - 1}\big(\innp{\mFb_{k - 1}, \vu_k - \vu_{k + 1}} + g(\vu_{k}) - g(\vu_{k+1})\big)}\\
  &+ a_k\innp{\mFb_k - \mFb_{k - 1}, \vu_k - \vu_{k + 1}}.
        \end{aligned}
    \end{equation}
Denote $\gT_1 : = a_k \big(\innp{\mFb_{k}, \vu_{k+1} - \vu} + g(\vu_{k+1}) - g(\vu)\big)$ and $\gT_2 := \frac{a_{k}}{a_{k - 1}}a_{k - 1}\big(\innp{\mFb_{k - 1}, \vu_k - \vu_{k + 1}} + g(\vu_{k}) - g(\vu_{k+1})\big)$, the terms in the first and the second line of the right-hand side of \eqref{eq:hk-2-decomposition}. To prove the lemma, we bound above $\gT_1 + \gT_2.$

    Observe first that by the update rule \eqref{eq:uk} and Fact \ref{fact:update-3PI}, we have
\begin{align}
  \gT_1 \leq \;& \frac{1}{2}\big(\|\vv_k - \vu\|_{\mLambda}^2  - \|\vu_{k + 1} - \vv_k\|_{\mLambda}^2 - (\mu a_{k} + 1)\|\vu_{k + 1} - \vu\|_{\mLambda}^2\big), \label{eq:gT-1-bnd} \\
  \gT_2 \leq \;& \frac{a_k}{2 a_{k - 1}}\big(\|\vu_{k + 1} - \vv_{k - 1}\|_{\mLambda}^2 - \|\vu_k - \vv_{k - 1}\|_{\mLambda}^2 - (\mu a_{k-1} + 1)\|\vu_k - \vu_{k + 1}\|_{\mLambda}^2\big). \label{eq:gT-2-bnd}
\end{align}
Further, by its definition, $\vv_k = (1-\beta)\vu_k + \beta\vv_{k-1}.$ Thus, $\|\vu - \vv_{k+1}\|_{\mLambda}^2 = \|(1-\beta)(\vu - \vu_{k+1}) + \beta(\vu - \vv_k)\|_{\mLambda}^2$. A rearrangement of the identity from Fact \ref{fact:quadratic-cvx-comb} then gives
\begin{equation}\label{eq:uk+1-u2}
        \|\vu_{k+1} - \vu\|_{\mLambda}^2 = \frac{1}{1-\beta}\|\vu - \vv_{k+1} \|_{\mLambda}^2 - \frac{\beta}{1-\beta}\|\vu - \vv_k \|_{\mLambda}^2 + \beta \|\vu_{k+1} - \vv_k \|_{\mLambda}^2. 
    \end{equation}
Similarly, since $\|\vu_{k+1} - \vv_k\|_{\mLambda}^2 = \|(1-\beta)(\vu_{k+1} - \vu_k) + \beta(\vu_{k+1} - \vv_{k-1})\|_{\mLambda}^2$, we have
\begin{equation}\label{eq:uk+1-vk-12}
        \|\vu_{k+1} - \vv_{k-1} \|_{\mLambda}^2 = \frac{1}{\beta}\|\vu_{k+1} - \vv_k \|_{\mLambda}^2 - \frac{1-\beta}{\beta}\|\vu_{k+1} - \vu_k \|_{\mLambda}^2 + (1-\beta)\|\vu_k - \vv_{k-1} \|_{\mLambda}^2.
    \end{equation}
Plugging \eqref{eq:uk+1-u2} and \eqref{eq:uk+1-vk-12} back into \eqref{eq:gT-1-bnd} and \eqref{eq:gT-2-bnd}, respectively, and summing them up, we obtain the following bound on $\gT_1 + \gT_2$:
\begin{equation}\label{eq:sum-gT1-gT2}
        \begin{aligned}
           & \gT_1 + \gT_2\\
          & \leq \frac{1}{2(1 - \beta)}\Big((1 + \beta \mu a_k)\|\vu - \vv_k\|_{\mLambda}^2 - (1 + \mu a_k)\|\vu - \vv_{k + 1}\|_{\mLambda}^2\Big) \\ 
    & + \Big(\frac{a_k}{2\beta a_{k - 1}} - \frac{1 + \beta + \beta \mu a_k}{2}\Big)\|\vu_{k + 1} - \vv_k\|_{\mLambda}^2 - \frac{a_k(1 + \beta \mu a_{k-1})}{2\beta a_{k - 1}}\|\vu_{k + 1} - \vu_k\|_{\mLambda}^2,
        \end{aligned}
    \end{equation}
where we dropped the non-positive term $- \frac{\beta a_k}{2 a_{k - 1}} \|\vu_k - \vv_{k - 1}\|_{\mLambda}^2$ from the right-hand side. It now remains to bound above $\big(\frac{a_k}{2\beta a_{k - 1}} - \frac{1 + \beta + \beta \mu a_k}{2}\big)\|\vu_{k + 1} - \vv_k\|_{\mLambda}^2.$

    Observe first that $\frac{a_k}{2\beta a_{k - 1}} \leq (1-\gamma) \frac{1 + \beta + \beta \mu a_k}{2}$ would ensure  $\frac{a_k}{2\beta a_{k - 1}} - \frac{1 + \beta + \beta \mu a_k}{2} \leq -\gamma \frac{1 + \beta + \beta \mu a_k}{2} \leq - \frac{\gamma(1+\beta)}{2}.$ A rearrangement of $\frac{a_k}{2\beta a_{k - 1}} \leq (1-\gamma) \frac{1 + \beta + \beta \mu a_k}{2}$ gives $\big(\frac{1}{1-\gamma}-\beta^2 \mu a_{k-1}\big)a_k \leq \beta(1 + \beta)a_{k-1},$ which is implied by (any of) our step size conditions (see \eqref{eq:complete-step-size-conditions}), and thus:
\begin{equation}\label{eq:uk+1-vk-bnd-1}
        \Big(\frac{a_k}{2\beta a_{k - 1}} - \frac{1 + \beta + \beta \mu a_k}{2}\Big)\|\vu_{k + 1} - \vv_k\|_{\mLambda}^2 \leq - \frac{\gamma(1+\beta)}{2}\|\vu_{k + 1} - \vv_k\|_{\mLambda}^2. 
    \end{equation}
Now observe that we can write $\vu_{k + 1} - \vv_k = \vu_{k + 1} - \vu_k + \vu_k - \vv_k = \vu_{k + 1} - \vu_k + \beta(\vu_k - \vv_{k-1})$. Thus, applying Fact \ref{fact:reverse-of-young} with $\alpha = {\omega_{k-1}},$ we have
\begin{equation}\notag
        \|\vu_{k + 1} - \vv_k\|_{\mLambda}^2 \geq \frac{\omega_{k-1}}{1+\omega_{k-1}}\|\vu_{k + 1} - \vu_k\|_{\mLambda}^2 - \omega_{k-1}\beta^2 \|\vu_k - \vv_{k-1}\|_{\mLambda}^2. 
    \end{equation}
Taking a convex combination of the above inequality and the trivial inequality $\|\vu_{k + 1} - \vv_k\|_{\mLambda}^2 \geq \|\vu_{k + 1} - \vv_k\|_{\mLambda}^2$, with weights $\frac{1}{1+\beta^2}$ and $\frac{\beta^2}{1 + \beta^2},$ we get
\begin{equation}\label{eq:uk+1-vk-bnd-2} 
        \begin{aligned}
            \|\vu_{k + 1} - \vv_k\|_{\mLambda}^2 \geq \; &\frac{{\omega_{k-1}}}{(1+\beta^2)(1+\omega_{k-1})}\|\vu_{k + 1} - \vu_k\|_{\mLambda}^2 \\
            &+\frac{\beta^2}{1+\beta^2}\big(\|\vu_{k + 1} - \vv_k\|_{\mLambda}^2 - \omega_{k-1}\|\vu_{k} - \vv_{k-1}\|_{\mLambda}^2\big). 
        \end{aligned}
    \end{equation}
To complete the proof, it now remains to combine \eqref{eq:hk-2-decomposition}, \eqref{eq:sum-gT1-gT2}, \eqref{eq:uk+1-vk-bnd-1}, and \eqref{eq:uk+1-vk-bnd-2}, and group the like terms. 
\end{proof}

The bound on $h_k^{(2)}(\vu)$ in Lemma \ref{lemma:update-merit-error} may be difficult to parse, so we explain here what it states. The terms in the first two lines of the bound telescope when multiplied by $\theta_k,$ which is ensured by $\theta_k = \theta_{k-1}/\omega_{k-1}$ and our conditions on step sizes $a_k$. The remaining two lines contain an ``error'' that we show how to control in the sequel.

What we have achieved so far is to show that we can bound the merit function $h_k(\vu)$ by terms that telescope when multiplied by $\theta_k$ and terms that all can be expressed as functions of increments $\vu_{k+1} - \vu_k.$ In particular, Lemmas \ref{lemma:op-correction} and \ref{lemma:update-merit-error} imply
\begin{equation}\label{eq:h_k-high-level-decomposition}
    \theta_k h_k(\vu) \leq [\text{telescoping terms}]_k + \theta_k \gE_k, 
\end{equation}
where
\begin{equation}\label{eq:error_k-def}
\begin{aligned}
    \gE_k :=\; & a_k\innp{\mF(\vu_k) - \mFt_k, \vu_{k} - \vu_{k+1}} + a_k\innp{\mFb_k - \mFb_{k - 1}, \vu_k - \vu_{k + 1}}\\
            &- \Big(\frac{\gamma{\omega_{k-1}}(1+\beta)}{2(1+\omega_{k-1})(1+\beta^2)} + \frac{a_k(1 + \beta \mu a_{k-1})}{2\beta a_{k - 1}}\Big)\|\vu_{k + 1} - \vu_k\|_{\mLambda}^2. 
\end{aligned}
\end{equation}
The key step in the analysis is now to show that $\gE_k$ can be effectively controlled, using local Lipschitz assumptions on $\mF$ and by appropriately selecting the (adaptive) step size $a_k.$ This is done in the following lemma.

\begin{lemma}\label{lemma:error-k}
     Consider the iterates $\vu_k,$ $k \geq 0,$ of Algorithm \ref{alg:ADUCA} and $\gE_k$ defined by \eqref{eq:error_k-def}. Then, under Assumption \ref{assp:localLipschitz}, for all $k \geq0,$
\begin{equation}\notag
     \begin{aligned}
         \theta_k \gE_k \leq\; & - \frac{c_k}{2}\|\vu_{k+1} - \vu_{k}\|_{\mLambda}^2\\
         &+ \frac{c_{k-1}}{8}\|\vu_{k} - \vu_{k-1}\|_{\mLambda}^2 + \frac{c_{k-2}}{4}\|\vu_{k-1} - \vu_{k-2}\|_{\mLambda}^2 + \frac{c_{k-3}}{8}\|\vu_{k-2} - \vu_{k-3}\|_{\mLambda}^2,
     \end{aligned}
     \end{equation}
where $c_k := \frac{\theta_k a_k(\beta \mu a_{k-1} + 1)}{\beta a_{k - 1}}$ for $k \geq 0$ and $c_i = 0$ for $i < 0.$
\end{lemma}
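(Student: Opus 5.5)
The plan is to bound the two inner-product terms in $\gE_k$ by Cauchy--Schwarz in the dual pair $\|\cdot\|_{\mLambda^{-1}}$, $\|\cdot\|_{\mLambda}$. The resulting operator differences are then expressed through the local constants $L_k$ and $\hat L_j$ from \eqref{eq:local-Lip-constants}, and Young's inequality turns everything into squared increments. The step size conditions \eqref{eq:complete-step-size-conditions} are then used term by term to absorb the coefficients.

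First, I expand $\mFb_k - \mFb_{k-1}$ using \eqref{eq:barF}:
\[
\mFb_k - \mFb_{k-1} = (\mFt_k - \mFt_{k-1}) + \tfrac{a_{k-1}\omega_{k-1}}{a_k}\big(\mF(\vu_{k-1}) - \mFt_{k-1}\big) - \tfrac{a_{k-2}\omega_{k-2}}{a_{k-1}}\big(\mF(\vu_{k-2}) - \mFt_{k-2}\big).
\]
I then write $\mFt_k - \mFt_{k-1} = (\mFt_k - \mF(\vu_k)) + (\mF(\vu_k) - \mF(\vu_{k-1})) + (\mF(\vu_{k-1}) - \mFt_{k-1})$. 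The term $\mFt_k - \mF(\vu_k)$ cancels exactly against the first inner product in $\gE_k$, namely $a_k\innp{\mF(\vu_k) - \mFt_k, \vu_k - \vu_{k+1}}$. What remains is $a_k$ times the inner product of $\vu_k - \vu_{k+1}$ with three kinds of terms:
\begin{itemize}
\item $\mF(\vu_k) - \mF(\vu_{k-1})$, with norm at most $L_k\|\vu_k - \vu_{k-1}\|_{\mLambda}$;
\item $(1 + \tfrac{a_{k-1}\omega_{k-1}}{a_k})(\mF(\vu_{k-1}) - \mFt_{k-1})$, with norm at most $\hat L_{k-1}\|\vu_{k-1} - \vu_{k-2}\|_{\mLambda}$;
\item $\tfrac{a_{k-2}\omega_{k-2}}{a_{k-1}}(\mF(\vu_{k-2}) - \mFt_{k-2})$, with norm at most $\hat L_{k-2}\|\vu_{k-2} - \vu_{k-3}\|_{\mLambda}$.
\end{itemize}
This matches the four increments in the claimed bound.

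Next, I apply Young's inequality $ab \le \frac{\lambda}{2}a^2 + \frac{1}{2\lambda}b^2$ to each of the three products. The $\lambda$ is chosen so that the $\|\vu_{k+1}-\vu_k\|_{\mLambda}^2$ parts together consume at most half of the negative coefficient already in $\gE_k$. Note that $\theta_k \frac{a_k(1+\beta\mu a_{k-1})}{2\beta a_{k-1}} = c_k/2$, so the available negative mass after multiplying by $\theta_k$ is $c_k/2$, plus the extra $\gamma$-term, which I keep as slack. The split of this budget among the three pieces is governed by $\tau$ and $1-\tau$: the $L_k$ term and the two $\hat L$ terms share $\tau$, and the factor $\sqrt 3$ in \eqref{eq:complete-step-size-conditions} reflects a three-way split. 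The positive parts then have coefficients of the form $\theta_k a_k^2 L_k^2/(\cdot)$, $\theta_k a_k^2 \hat L_{k-1}^2(1+\cdot)^2/(\cdot)$, and so on. I must show these are at most $c_{k-1}/8$, $c_{k-2}/4$ and $c_{k-3}/8$, respectively.

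To carry this out, I rewrite $c_{j} = \theta_j \phi_j^2$, using the definition of $\phi_j$ in \eqref{eq:omega-def}, and use $\theta_k = \theta_{j}/\prod_{i=j}^{k-1}\omega_i$. Each required inequality then becomes $a_k^2 \le (\text{const})\,\omega\text{-products}\cdot\phi_j^2/L^2$, which is exactly the squared form of one of the conditions in \eqref{eq:complete-step-size-conditions}:
\begin{itemize}
\item the first line, with $L_k$ and $\phi_{k-1}$, handles the $\vu_k-\vu_{k-1}$ term;
\item the second line, with $\hat L_{k-1}$ and $\phi_{k-2}$, handles the $\vu_{k-1}-\vu_{k-2}$ term;
\item the third line, with $\hat L_{k-2}$ and $\phi_{k-3}$, where the ratio $a_{k-1}/a_{k-2}$ appears, handles the $\vu_{k-2}-\vu_{k-3}$ term.
\end{itemize}
The $\eta$ and $\gamma$ factors enter because I also spend the slack term $\frac{\gamma\omega_{k-1}(1+\beta)}{2(1+\omega_{k-1})(1+\beta^2)}$ in the Young balancing. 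This is why $\eta^2 = \gamma(1+\beta)/(1+\beta^2)$ and the factor $\sqrt{1+\omega_{k-1}}$ appear.

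The $\hat L_k$-condition in the first line, with weight $(1-\tau)$, seems not to be needed in the current lemma, unless the coefficient $(1 + a_{k-1}\omega_{k-1}/a_k)$ is split so that part of it is charged one index later. I would first try to use it to handle the $\vu_{k-1}-\vu_{k-2}$ term split between the two factors of $(1+\tfrac{a_{k-1}\omega_{k-1}}{a_k})$. The edge cases $k<3$ follow from the convention $c_i = 0$ for $i<0$ and the initialization $a_0=a_{-1}$.

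The main obstacle is the bookkeeping of constants: choosing the Young weights so that the geometric-type factors $\theta_k/\theta_j$, the ratios $a_{k-1}/a_{k-2}$ and the constants $1/8, 1/4, 1/8$ come out exactly matching \eqref{eq:complete-step-size-conditions}. I would fix the weights by working backward from the stated conditions.
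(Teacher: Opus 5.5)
Your skeleton matches the paper's, in three respects:
\begin{itemize}
\item the same rewriting of $\mF(\vu_k)-\mFt_k+\mFb_k-\mFb_{k-1}$ into an $L_k$ piece, a $\hat L_{k-1}$ piece with coefficient $a_k+a_{k-1}\omega_{k-1}$, and a $\hat L_{k-2}$ piece;
\item Young weights chosen so the older increments receive exactly $c_{k-1}/8$, $c_{k-2}/4$ and $c_{k-3}/8$;
\item verification through $c_j=\theta_j\phi_j^2$ and $\theta_k\omega_{k-1}=\theta_{k-1}$.
\end{itemize}

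The flaw is your budget. You say the Young surplus on $\|\vu_{k+1}-\vu_k\|_{\mLambda}^2$ consumes part of the $c_k/2$ mass, with the $\gamma$-term kept as slack. That cannot give the lemma. The quantity $c_k/2$ is \emph{exactly} $\theta_k$ times the second negative term of $\gE_k$, and the conclusion keeps all of it; it is what the later $\tfrac18+\tfrac14+\tfrac18$ charges telescope against. The entire surplus must instead be paid by the $\gamma$-term, which after multiplying by $\theta_k$ equals $\frac{\theta_k\eta^2\omega_{k-1}}{2(1+\omega_{k-1})}$. The conditions \eqref{eq:complete-step-size-conditions} are calibrated so that the four surplus pieces use fractions $\tfrac{\tau}{3},\tfrac{\tau}{3},1-\tau,\tfrac{\tau}{3}$ of it, which sum to one. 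This is where $\eta$, $\sqrt3$ and $\sqrt{1+\omega_{k-1}}$ come from. Your later remark that you ``also spend the slack term'' points the right way, but it contradicts the earlier plan, and the earlier plan fails if followed literally.

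Relatedly, the $(1-\tau)$ $\hat L$-condition in the first line of \eqref{eq:complete-step-size-conditions} is not optional. The $\hat L_{k-1}$ piece carries the factor $a_k+a_{k-1}\omega_{k-1}$. The second line controls only $a_k\hat L_{k-1}$. The only condition controlling $a_{k-1}\hat L_{k-1}$ is that first-line condition, applied at index $k-1$.

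So you must split this piece. Either use two Young applications, each contributing $c_{k-2}/8$ (as the paper does), or use $(a_k+a_{k-1}\omega_{k-1})^2\le 2a_k^2+2a_{k-1}^2\omega_{k-1}^2$. Charge the $a_k$ part to the second line. Charge the $a_{k-1}\omega_{k-1}=a_{k-1}\theta_{k-1}/\theta_k$ part to the first-line $\hat L$ condition at $k-1$, where $1+\omega_{k-1}\le 2$ absorbs its factor $1/\sqrt2$.

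Finally, the cases $k\le 2$ do not follow from the convention $c_i=0$: a zero coefficient cannot absorb a nonzero cross term. They hold because those cross terms vanish identically, since $\vu_i=\vu_0$ and $\mFt_i=\mF(\vu_0)$ for $i\le 0$. The choice $a_{-1}=a_0$ only ensures that $c_0$ and $\phi_0$ are well defined for the $L_1$ piece.
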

\begin{proof}
By the definition of $\mFb_k$ in~\cref{eq:barF}, the first two terms of $\gE_k$, $a_k\innp{\mF(\vu_k) - \mFt_{k}, \vu_k - \vu_{k + 1}} + a_k\innp{\mFb_k - \mFb_{k - 1}, \vu_k - \vu_{k + 1}}$, can be decomposed as follows: 
\begin{equation*}
\begin{aligned}
  \;& a_k\innp{\mF(\vu_k) - \mFt_{k}, \vu_k - \vu_{k + 1}} + a_k\innp{\mFb_k - \mFb_{k - 1}, \vu_k - \vu_{k + 1}} \\
  = \;& a_k\innp{\mF(\vu_k) - \mF(\vu_{k - 1}), \vu_k - \vu_{k + 1}} + a_{k - 1} \omega_{k - 1} \innp{\mF(\vu_{k - 1}) - \mFt_{k - 1}, \vu_{k} - \vu_{k + 1}} \\
  & + a_k\innp{\mF(\vu_{k - 1}) - \mFt_{k - 1}, \vu_{k} - \vu_{k + 1}} + \frac{a_k a_{k - 2} \omega_{k - 2}}{a_{k- 1}}\innp{\mF(\vu_{k - 2}) - \mFt_{k - 2}, \vu_{k + 1} - \vu_k}.
\end{aligned}
\end{equation*}
Recall here that $\vu_i = \vu_0$ for $i < 0$ and $\mFt_i = \mF(\vu_0)$ for $i \leq 0$. 
To bound the non-zero terms on the right-hand side, we apply Cauchy-Schwarz inequality followed by Young's inequality, which leads to the following statement: for any $\vu, \vv, \vu', \vv' \in \sR^d$,~$\delta > 0$, \begin{align*}\label{eq:ineq-chain}
    \innp{\vu' - \vv', \vu - \vv} \leq \big\|\mLambda^{-1/2}(\vu' - \vv')\big\| \big\|\mLambda^{1/2}(\vu - \vv)\big\| \leq \frac{\delta}{2}\|\vu' - \vv'\|^{2}_{\mLambda^{-1}} + \frac{1}{2\delta}\|\vu - \vv\|^{2}_{\mLambda}.
\end{align*} 
Applying this inequality to each term with parameters $\delta_1, \delta_2, \delta_3, \delta_4 > 0$, and recalling the definitions $L_k$ and $\hat L_k,$ we obtain for $k \geq 3:$
\begin{equation}\notag \begin{aligned}
    \;& a_k\innp{\mF(\vu_k) - \mFt_{k}, \vu_k - \vu_{k + 1}} + a_k\innp{\mFb_k - \mFb_{k - 1}, \vu_k - \vu_{k + 1}} \\
    \leq \;& \frac{1}{2}\Big(\frac{a_k}{\delta_1} + \frac{a_k}{\delta_2} + \frac{a_{k - 1} \omega_{k - 1}}{\delta_3} + \frac{a_k a_{k - 2} \omega_{k - 2}}{a_{k - 1}\delta_4}\Big)\|\vu_{k + 1} - \vu_k\|_{\mLambda}^2 \\
    & + \frac{L_k^2 a_k \delta_1}{2}\|\vu_k - \vu_{k - 1}\|_{\mLambda}^2 + \frac{\hat L_{k - 1}^2 (a_k\delta_2 + a_{k - 1}\omega_{k - 1}\delta_3)}{2}\|\vu_{k - 1} - \vu_{k - 2}\|_{\mLambda}^2 \\
    & + \frac{\hat L_{k - 2}^2 a_k a_{k - 2} \omega_{k - 2}\delta_4}{2a_{k - 1}}\|\vu_{k - 2} - \vu_{k - 3}\|_{\mLambda}^2. 
\end{aligned}
\end{equation}    
For more general $k,$ a similar expression is obtained, where we note that the terms involving $\delta_1$ only appear for $k \geq 1,$ the terms involving $\delta_2$ and $\delta_3$ appear for $k \geq 2$, and the terms involving $\delta_4$ appear for $k \geq 3$ (otherwise these terms should be treated as zero).  
Thus, we have (with the same note about $k$): 
\begin{equation}\label{eq:Ek-with-deltas}
    \begin{aligned}
        \theta_k\gE_k \leq \;& \frac{\theta_k}{2}\Big(\frac{a_k}{\delta_1} + \frac{a_k}{\delta_2} + \frac{a_{k - 1} \omega_{k - 1}}{\delta_3} + \frac{a_k a_{k - 2} \omega_{k - 2}}{a_{k - 1}\delta_4}\\
        &\quad- \frac{\gamma{\omega_{k-1}}(1+\beta)}{(1+\omega_{k-1})(1+\beta^2)} - \frac{a_k(1 + \beta \mu a_{k-1})}{\beta a_{k - 1}}\Big)\|\vu_{k + 1} - \vu_k\|_{\mLambda}^2 \\
    & + \frac{\theta_k L_k^2 a_k \delta_1}{2}\|\vu_k - \vu_{k - 1}\|_{\mLambda}^2 + \frac{\theta_k\hat L_{k - 1}^2 (a_k\delta_2 + a_{k - 1}\omega_{k - 1}\delta_3)}{2}\|\vu_{k - 1} - \vu_{k - 2}\|_{\mLambda}^2 \\
    & + \frac{\theta_k\hat L_{k - 2}^2 a_k a_{k - 2} \omega_{k - 2}\delta_4}{2a_{k - 1}}\|\vu_{k - 2} - \vu_{k - 3}\|_{\mLambda}^2.
    \end{aligned}
\end{equation}
For the terms on the right-hand side of \eqref{eq:Ek-with-deltas} to telescope when summed over $k \geq 0$, we need that there exists a sequence of positive numbers $c_k,$ $k \geq 0$, such that the coefficient multiplying $\|\vu_{k+1} - \vu_k\|_{\mLambda}^2$ in \eqref{eq:Ek-with-deltas} is at most $-c_k/2,$ while the coefficients multiplying $\|\vu_{k} - \vu_{k-1}\|_{\mLambda}^2$, $\|\vu_{k-1} - \vu_{k-2}\|_{\mLambda}^2$, {$\|\vu_{k-2} - \vu_{k-3}\|_{\mLambda}^2$} are, respectively, at most $\alpha_1 c_{k-1}/2,$ $\alpha_2 c_{k-2}/2,$ $(1-\alpha_1 - \alpha_2)c_{k-3}/2$, for some positive $\alpha_1, \alpha_2$ such that $\alpha_1 + \alpha_2 < 1.$ Here we recall that, by the above discussion, any terms that would nominally correspond to $c_i$ for $i < 0$ would be zero and thus can safely be ignored.  

In particular, let us choose $\delta_1, \delta_2, \delta_3, \delta_4$ so the corresponding coefficients multiplying $\|\vu_{k} - \vu_{k-1}\|_{\mLambda}^2$, $\|\vu_{k-1} - \vu_{k-2}\|_{\mLambda}^2$, $\|\vu_{k} - \vu_{k-1}\|_{\mLambda}^2$ are, respectively, $c_{k-1}/8,$ $c_{k-2}/4,$ and $c_{k-3}/8.$ Solving the equations resulting from such conditions, we get the following expressions for $\delta_1 (k \geq 1), \delta_2 (k \geq 2), \delta_3 (k \geq 2), \delta_4 (k \geq 3)$:
\begin{equation}\label{eq:deltas-def}
    \begin{aligned}
        \delta_1 &=\frac{c_{k-1}}{4\theta_k L_k^2 a_k}, &&\delta_2 = \frac{c_{k-2}}{4\theta_k\hat{L}_{k-1}^2 a_k},\\
        \delta_3 &= \frac{c_{k-2}}{4\theta_k \hat{L}_{k-1}^2 a_{k-1}\omega_{k-1}},  &&\delta_4 = \frac{c_{k-3}a_{k-1}}{4\theta_k\hat{L}_{k-2}^2a_k a_{k-2}\omega_{k-2}}.
    \end{aligned}
\end{equation}
Thus, interpreting any terms containing $c_i$ with $i < 0$ as zero, and plugging the choices of \eqref{eq:deltas-def} into \eqref{eq:Ek-with-deltas}, the goal now becomes to ensure that the coefficient multiplying $\|\vu_{k+1} -\vu_k\|_{\mLambda}^2$ is at most {$-c_k/2.$} Recalling that $\theta_k \omega_{k-1} = \theta_{k-1},$ we want to ensure: 
\begin{equation}\notag
    \begin{aligned}
       & \frac{(2\theta_k a_k L_k)^2}{c_{k-1}} + \frac{(2\theta_k a_k \hat{L}_{k-1})^2}{c_{k-2}} + \frac{(2\theta_{k-1}a_{k-1} \hat{L}_{k-1})^2}{c_{k-2}} + \frac{1}{c_{k-3}}\Big(\frac{2\hat{L}_{k-2}\theta_k a_k \theta_{k-2}a_{k-2}}{\theta_{k-1}a_{k-1}}\Big)^2\\
       & - \frac{\theta_k\gamma{\omega_{k-1}}(1+\beta)}{(1+\omega_{k-1})(1+\beta^2)} - \frac{\theta_k a_k(1 + \beta \mu a_{k-1})}{\beta a_{k - 1}} \leq -c_k.
    \end{aligned}
\end{equation}
To ensure this inequality is satisfied, we define $c_k := \frac{\theta_k a_k(\beta \mu a_{k-1} + 1)}{\beta a_{k - 1}}$, and use the remaining negative term $-\frac{\theta_k\gamma{\omega_{k-1}}(1+\beta)}{(1+\omega_{k-1})(1+\beta^2)}$ to cancel out all of the non-negative terms in the above inequality. Fix $\tau \in (0, 1).$  
Defining $\eta := \sqrt{\frac{\gamma(1+\beta)}{1 + \beta^2}}$ to simplify the notation, we reach the following conditions on the step size:

Setting $\frac{(2\theta_k a_k L_k)^2}{c_{k-1}} \leq \frac{\tau}{3}\eta^2 \theta_k\frac{\omega_{k-1}}{1 + \omega_{k-1}}$ 
for $k \geq 1,$  plugging in the definition of $c_{k-1}$, and solving for $a_k$ (where we recall $\theta_k \omega_{k-1} = \theta_{k-1}$), we get that the first condition is
\begin{equation}\label{eq:ak-cond-1}
    a_k \leq \frac{\eta \sqrt{\tau}}{2\sqrt{3} L_k }\frac{\omega_{k-1}}{\sqrt{1 + \omega_{k-1}}}\sqrt{\frac{a_{k-1}(\beta \mu a_{k-2} + 1)}{\beta a_{k-2}}}, \; k \geq 1.
\end{equation}
Setting $ \frac{(2\theta_k a_k \hat{L}_{k-1})^2}{c_{k-2}} \leq \frac{\tau}{3}\eta^2 \theta_k\frac{\omega_{k-1}}{1 + \omega_{k-1}}$
for $k \geq 2$, and solving for $a_k,$ the second condition is
\begin{equation}\label{eq:ak-cond-2}
    a_k \leq \frac{\eta \sqrt{\tau} }{2\sqrt{3}\hat{L}_{k-1}}\frac{\omega_{k-1}\sqrt{\omega_{k-2}}}{\sqrt{1 + \omega_{k-1}}}\sqrt{\frac{a_{k-2}(\beta \mu a_{k-3} + 1)}{\beta a_{k-3}}}, \; k \geq 2.
\end{equation}
Setting $ \frac{(2\theta_{k-1}a_{k-1} \hat{L}_{k-1})^2}{c_{k-2}} \leq (1-\tau)\eta^2 \theta_k\frac{\omega_{k-1}}{1 + \omega_{k-1}}$
for $k \geq 2$, and solving for $a_{k-1}$, the third condition is
\begin{equation}\label{eq:ak-cond-3}
    a_{k-1} \leq \frac{\eta \sqrt{1-\tau}}{2\hat{L}_{k-1}}\frac{\sqrt{\omega_{k-2}}}{\sqrt{1 + \omega_{k-1}}}\sqrt{\frac{a_{k-2}(\beta \mu a_{k-3} + 1)}{\beta a_{k-3}}},\; k \geq 2.
\end{equation}
Finally, setting $\frac{1}{c_{k-3}}\Big(\frac{2\hat{L}_{k-2}\theta_k a_k \theta_{k-2}a_{k-2}}{\theta_{k-1}a_{k-1}}\Big)^2 \leq \frac{\tau}{3}\eta^2 \theta_k\frac{\omega_{k-1}}{1 + \omega_{k-1}}$ 
for $k \geq 3$, and solving for $a_k,$ the fourth condition is
\begin{equation}\label{eq:ak-cond-4}
    a_k \leq \frac{\eta \sqrt{\tau}}{2\sqrt{3}\hat{L}_{k-2}}\frac{\omega_{k-1}}{\sqrt{1 + \omega_{k-1}}}\sqrt{\frac{\omega_{k-3}}{\omega_{k-2}}}\frac{a_{k-1}}{a_{k-2}}\sqrt{\frac{a_{k-3}(\beta \mu a_{k-4} + 1)}{\beta a_{k-4}}},\; k \geq 3.
\end{equation}
To complete the proof, it remains to verify that the conditions \eqref{eq:ak-cond-1}--\eqref{eq:ak-cond-4} hold by our choice of the step size, which is immediate upon noting that $\omega_k \in (0, 1],$ $\forall k.$ 
\end{proof}

We are now ready to present the main convergence theorem for \aduca, derived by combining the preceding three lemmas, under a mild initialization condition that we show in the next section can be ensured by employing \emph{one} line search, at initialization, at (at most) a logarithmic cost.

\begin{theorem} \label{theorem:bound-gap}
    Under Assumptions \ref{assp:monotone-sol}, \ref{assp:gConvex}, and \ref{assp:localLipschitz}
and assuming that $a_{-1} = a_0 \leq \frac{1}{\sqrt{2}L_1}$, the output $\hat{\vu}_K$ and iterates $\{\vu_k\}_{k \geq 0}$ of Algorithm \ref{alg:ADUCA} run for $K \geq 1$ iterations satisfy for all $\vu \in \dom(g):$
\begin{align*}
        &\; A_K \Gap(\hat{\vu}_K;\vu) + \frac{\theta_K}{4(1- \beta)}\|\vu - \vv_{K+1}\|_\mLambda^2\\
        <\; & (1 + \mu/L_1 )\|\vu - \vu_0 \|_{\mLambda}^2 + \big({\eta^2 \beta^2} + 1 + \mu/L_1 \big)\|\vu^* - \vu_0\|_{\mLambda}^2,
    \end{align*}
where $A_K = \sum_{k=1}^K \theta_k a_k,$ {$\hat \vu_K = \frac{\sum_{k = 1}^{K} \theta_k a_k \vu_{k}}{A_K},$ and $L_1 = \frac{\| F(\vu_1) - F(\vu_{0}) \|_{\mLambda^{-1}}}{\|\vu_1 - \vu_{0}\|_{\mLambda}}.$} As a consequence, $\Gap(\hat{\vu}_K;\vu) = \cO\big(\frac{\|\vu - \vu_0\|_\mLambda^2 + \|\vu^* - \vu_0\|_\mLambda^2}{A_K}\big)$ and $\|\vu^* - \vv_{K+1}\|_\mLambda^2 = \cO\big(\frac{\|\vu^* - \vu_0\|_\mLambda^2}{\theta_K}\big).$
\end{theorem}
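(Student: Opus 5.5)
I will sum the per-iteration bound $\theta_k h_k(\vu)$ over $k=1,\dots,K$, apply Fact~\ref{lemma:gap}, and then handle the boundary terms at $k=0$ and $k=K$. From Lemmas~\ref{lemma:op-correction} and~\ref{lemma:update-merit-error}, $\theta_k h_k(\vu)$ splits into four pieces. The first is $\theta_k$ times the difference $a_k\innp{\mF(\vu_k)-\mFt_k,\vu_{k+1}-\vu} - a_{k-1}\omega_{k-1}\innp{\mF(\vu_{k-1})-\mFt_{k-1},\vu_k-\vu}$, which telescopes because $\theta_k\omega_{k-1}=\theta_{k-1}$. The second is the $\vv$-distance terms $\frac{\theta_k}{2(1-\beta)}\big((1+\beta\mu a_k)\|\vu-\vv_k\|^2-(1+\mu a_k)\|\vu-\vv_{k+1}\|^2\big)$. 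These telescope once I check $\theta_{k+1}(1+\beta\mu a_{k+1})\le \theta_k(1+\mu a_k)$, i.e.\ $(1+\beta\mu a_{k+1})\le \omega_k(1+\mu a_k)=1+\rho\beta\mu a_k$, which follows from $a_{k+1}\le\rho a_k$. The third is the $\gamma$-terms $\theta_k(\omega_{k-1}\|\vu_k-\vv_{k-1}\|^2-\|\vu_{k+1}-\vv_k\|^2)$, which telescope directly. The fourth is the error $\theta_k\gE_k$, which Lemma~\ref{lemma:error-k} bounds by a weighted difference of increment norms. Summed over $k$, the weights $\tfrac18+\tfrac14+\tfrac18=\tfrac12$ exactly cancel the $-c_k/2$ terms, leaving $-\tfrac{c_K}{2}\|\vu_{K+1}-\vu_K\|^2$ together with negative leftovers for the last few indices, plus whatever initial terms Lemma~\ref{lemma:error-k} produces at $k=0$.

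After summing, I will be left with three kinds of terms. The initial terms are $\frac{1+\beta\mu a_1}{2(1-\beta)}\|\vu-\vv_1\|^2$, a $\gamma$-term involving $\omega_0\|\vu_1-\vv_0\|^2$, and the $k=1$ cross term $-a_0\omega_0\innp{\mF(\vu_0)-\mFt_0,\vu_1-\vu}$, which is zero since $\mFt_0=\mF(\vu_0)$. I also have to treat $h_0$ separately, or start the sum at $k=1$ and bound $\|\vu-\vv_1\|$ using the first prox step from $\vv_0=\vu_0$ via Fact~\ref{fact:update-3PI}. The final terms are $-\frac{\theta_K(1+\mu a_K)}{2(1-\beta)}\|\vu-\vv_{K+1}\|^2$, the last cross term $\theta_K a_K\innp{\mF(\vu_K)-\mFt_K,\vu_{K+1}-\vu}$, $-\theta_K\frac{\gamma\beta^2(1+\beta)}{2(1+\beta^2)}\|\vu_{K+1}-\vv_K\|^2$, and $-\frac{c_K}{2}\|\vu_{K+1}-\vu_K\|^2$.

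The final cross term is the main obstacle. I will bound it by Young's inequality with the $\hat L_K$ step-size condition, $a_K\hat L_K\le\frac{\eta}{2}\sqrt{(1-\tau)\omega_{K-1}/2}\,\phi_{K-1}$. Then $\|\mF(\vu_K)-\mFt_K\|_{\mLambda^{-1}}^2\le\hat L_K^2\|\vu_K-\vu_{K-1}\|^2$ is absorbed by the leftover negative increment term at index $K-1$. I will split $\vu_{K+1}-\vu=(\vu_{K+1}-\vv_K)+(\vv_K-\vu)$ and relate $\vv_K$ to $\vv_{K+1}$ using $\vv_{K+1}=(1-\beta)\vu_{K+1}+\beta\vv_K$. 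This way the $\|\vu-\vv_{K+1}\|^2$ part consumes at most half of $\frac{\theta_K}{2(1-\beta)}$, giving the $\frac{\theta_K}{4(1-\beta)}$ on the left-hand side, and the remaining part is absorbed by the negative $\gamma$-term. I expect the $\eta^2\beta^2$ constant to arise here, or from the initial $\gamma$-term. For the initial terms, I will use $a_0\le 1/(\sqrt2 L_1)$, the bound $a_1\le\rho a_0$, and $\mu a_1\lesssim\mu/L_1$ to obtain the factor $(1+\mu/L_1)$. I will also bound $\|\vu_1-\vu_0\|$ by a multiple of $\|\vu^*-\vu_0\|$, using firm nonexpansiveness of the first prox step compared against the fixed point $\vu^*=\mathrm{prox}(\vu^*-a_0\mF(\vu^*))$ together with $a_0L_1\le 1/\sqrt2$. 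This is where the $\|\vu^*-\vu_0\|^2$ term comes from.

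Finally, dividing by $A_K$ and applying Fact~\ref{lemma:gap} gives the gap bound. Taking $\vu=\vu^*$ makes $\Gap(\hat\vu_K;\vu^*)\ge0$ by \eqref{eq:prob} and monotonicity, which yields $\|\vu^*-\vv_{K+1}\|^2=\cO(\|\vu^*-\vu_0\|^2/\theta_K)$ and hence that the iterates stay bounded. The local Lipschitz matrices are therefore taken on a compact set determined a posteriori. To avoid circularity I will argue by induction on $K$ that all iterates so far lie in a ball of radius $c\|\vu^*-\vu_0\|$.
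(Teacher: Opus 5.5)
Your architecture is the paper's: you sum Lemmas~\ref{lemma:op-correction}, \ref{lemma:update-merit-error} and~\ref{lemma:error-k}, and telescope via $\theta_k\omega_{k-1}=\theta_{k-1}$ and $a_{k+1}\le\rho a_k$. You bound the last cross term by Young's inequality through $\vu_{K+1}-\vu=\beta(\vu_{K+1}-\vv_K)+(\vv_{K+1}-\vu)$, which is what your split reduces to, and absorb the $\hat L_K$ part into $-\frac{3c_{K-1}}{8}\|\vu_K-\vu_{K-1}\|_{\mLambda}^2$. You finish with $\|\vu_1-\vu_0\|_{\mLambda}^2\le 2\|\vu^*-\vu_0\|_{\mLambda}^2$.

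The step that cannot work is obtaining the factor $(1+\mu/L_1)$ in front of $\|\vu-\vu_0\|_{\mLambda}^2$. After telescoping, the initial term is $\frac{1+\mu a_0}{2(1-\beta)}\|\vu-\vv_1\|_{\mLambda}^2$ with $\vv_1=\vu_0+(1-\beta)(\vu_1-\vu_0)$. For $\vu$ far from $\vu_0$ this behaves like $\frac{1}{2(1-\beta)}\|\vu-\vu_0\|_{\mLambda}^2$, and $\frac{1}{2(1-\beta)}>1$ because $\beta>\frac{\sqrt5-1}{2}$. Neither $a_0\le 1/(\sqrt2L_1)$ nor $a_1\le\rho a_0$ touches this factor.

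In fact the displayed inequality is false for $\beta>3/4$, including the default $\beta=0.8$. Take $m=1$, $\mLambda=1$, $g\equiv0$, $\mu=0$ (so $\theta_K=1$), $\mF(\vu)=\arctan\vu$ on $\R$, and $\vu_0\neq0$. Then $A_K\Gap(\hat\vu_K;\vu)=\cO(|\vu|)$. So the left-hand side is $\frac{\vu^2}{4(1-\beta)}-\cO(|\vu|)$, while the right-hand side is $\vu^2+\cO(|\vu|)$.

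The paper's proof contains the same unjustified step. Fact~\ref{fact:quadratic-cvx-comb} gives $\frac{1}{2(1-\beta)}\|\vu-\vv_1\|_{\mLambda}^2=\frac12\|\vu-\vu_1\|_{\mLambda}^2+\frac{\beta}{2(1-\beta)}\|\vu-\vu_0\|_{\mLambda}^2-\frac{\beta}{2}\|\vu_1-\vu_0\|_{\mLambda}^2$, not the claimed $(1+\mu/L_1)\|\vu-\vu_0\|_{\mLambda}^2+\dots$. What the shared route does prove is the estimate with $\frac{1+\mu/L_1}{1-\beta}$ in front of $\|\vu-\vu_0\|_{\mLambda}^2$, using $\|\vu-\vv_1\|_{\mLambda}^2\le2\|\vu-\vu_0\|_{\mLambda}^2+2(1-\beta)^2\|\vu_1-\vu_0\|_{\mLambda}^2$, plus a larger $\beta$-dependent multiple of $\|\vu^*-\vu_0\|_{\mLambda}^2$. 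Both $\cO(\cdot)$ consequences survive.

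Three smaller points follow.

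The $k=0$ leftover you anticipated is real, and the paper's \eqref{eq:summed-gaps} drops it. Summing Lemma~\ref{lemma:error-k} from $k=1$ leaves up to $+\frac{c_0}{2}\|\vu_1-\vu_0\|_{\mLambda}^2$ with $c_0=\frac{1+\beta\mu a_0}{\beta}>0$. This must also be charged to $\|\vu^*-\vu_0\|_{\mLambda}^2$.

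For that charge, your firm-nonexpansiveness comparison works only if you route through $\mF(\vu_1)$. The constant $L_1$ controls $\mF(\vu_1)-\mF(\vu_0)$ but not $\mF(\vu_0)-\mF(\vu^*)$. Start from $\|\vu_1-\vu^*\|_{\mLambda}^2\le\innp{\vu_1-\vu^*,\mLambda(\vu_0-\vu^*)-a_0(\mF(\vu_0)-\mF(\vu^*))}$. Split $\mF(\vu_0)-\mF(\vu^*)=(\mF(\vu_0)-\mF(\vu_1))+(\mF(\vu_1)-\mF(\vu^*))$ and discard the second piece by monotonicity. This is equivalent to the paper's use of Fact~\ref{fact:update-3PI} at $\vu^*$ together with the Minty inequality at $\vu_1$.

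Finally, your induction on boundedness is unnecessary for this theorem. The argument uses only the computed ratios $L_k,\hat L_k$, never the matrices $\mQ^i_{\gC}$.
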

\begin{proof}
    Recall that $h_k(\vu) = h_k^{(1)}(\vu) + h_k^{(2)}(\vu)$. Using bounds from Lemmas \ref{lemma:op-correction}, \ref{lemma:update-merit-error}, and \ref{lemma:error-k}, summing up $\theta_k h_k(\vu)$ and noticing that the sum of the total bound is telescoping (where we recall that $\eta^2 = \frac{\gamma(1+\beta)}{1+\beta^2}$ and $\mFt_0 = \mF(\vu_0)$), we get
\begin{equation}\label{eq:summed-gaps}
        \begin{aligned}
           &\;  \sum_{k=1}^K \theta_k h_k(\vu)\\
           \leq \; &\theta_K a_K\innp{\mF(\vu_K) - \mFt_K, \vu_{K+1} - \vu} - \frac{3c_{K-1}}{8}\|\vu_K - \vu_{K-1}\|_\mLambda^2\\
            &+ \frac{1}{2(1 - \beta)}\Big(\theta_1(1 + \beta \mu a_1)\|\vu - \vv_1\|_{\mLambda}^2 - \theta_K(1 + \mu a_K)\|\vu - \vv_{K + 1}\|_{\mLambda}^2\Big)\\
            &+ \frac{\eta^2 \beta^2}{2}\big(\theta_{0}\|\vu_{1} - \vv_{0}\|_{\mLambda}^2 - \theta_K\|\vu_{K + 1} - \vv_K\|_{\mLambda}^2\big).
        \end{aligned}
    \end{equation}
Similarly as in the proof of Lemma \ref{lemma:error-k}, using generalized Cauchy-Schwarz inequality, Young's inequality, and the definition of $\hat{L}_k,$ we have that for any $\delta_1, \delta_2 > 0,$
\begin{align*}
        &\; \theta_K a_K\innp{\mF(\vu_K) - \mFt_K, \vu_{K+1} - \vu}\\
        =\; & \theta_K a_K\innp{\mF(\vu_K) - \mFt_K, \vu_{K+1} - \vv_{K+1}} + \theta_K a_K\innp{\mF(\vu_K) - \mFt_K, \vv_{K+1} - \vu}\\
        \leq \; &\frac{(\theta_K a_K \hat{L}_K)^2(\delta_1 + \delta_2)}{2}\|\vu_{K} - \vu_{K-1}\|_{\mLambda}^2 + \frac{1}{2\delta_1}\|\vu - \vv_{K+1}\|_{\mLambda}^2 + \frac{1}{2\delta_2}\|\vv_{K+1} - \vu_{K+1}\|_{\mLambda}^2\\
        =\; & \frac{(\theta_K a_K \hat{L}_K)^2(\delta_1 + \delta_2)}{2}\|\vu_{K} - \vu_{K-1}\|_{\mLambda}^2 + \frac{1}{2\delta_1}\|\vu - \vv_{K+1}\|_{\mLambda}^2 + \frac{\beta^2}{2\delta_2}\|\vv_{K} - \vu_{K+1}\|_{\mLambda}^2, 
    \end{align*}
where in the last line we recalled that $\vv_{K+1} = (1 - \beta)\vu_{K+1} + \beta\vv_K,$ by its definition. Comparing the last inequality to \eqref{eq:summed-gaps}, we get that by setting $\delta_1 = \frac{2(1 - \beta)}{\theta_K(1+\mu a_K)},$ $\delta_2 = \frac{1}{\eta^2 \theta_K}$ and verifying that under this choice our step size conditions \eqref{eq:complete-step-size-conditions} together with the hyperparameter choice (\cref{eq:ak-cond-3}, \eqref{eq:ak-cond-4}) ensure $\frac{(\theta_K a_K \hat{L}_K)^2(\delta_1 + \delta_2)}{2} \leq \frac{3 c_{K-1}}{8},$ we get that \eqref{eq:summed-gaps} simplifies to
\begin{equation}\label{eq:sum-gap-simpler}
        \sum_{k=1}^K \theta_k h_k(\vu) \leq \frac{1 + \mu a_0 }{2(1-\beta)}\|\vu - \vv_1 \|_{\mLambda}^2 + \frac{\eta^2 \beta^2}{2} \|\vu_1 - \vu_0\|_{\mLambda}^2 - \frac{\theta_K(1 + \mu a_K)}{4(1- \beta)}\|\vu - \vv_{K+1}\|_\mLambda^2,
    \end{equation}
where we used that $\theta_0 = 1$, $\theta_1 = \frac{1 + \mu a_0}{1 + \rho \beta \mu a_0} \le \frac{1 + \mu a_0}{1 + \beta \mu a_1}$ (since $a_1 \leq \rho a_0,$ by the step size conditions).  
To simplify \eqref{eq:sum-gap-simpler} further, recall that $\vv_1 = (1-\beta)\vu_1 + \beta\vv_0$, $\vv_0 = \vu_0$, and {$a_0 \leq 1/L_1$.} Thus, recalling {that $\frac{\sqrt{5} - 1}{2} \leq \beta \leq 1,$} using \Cref{fact:quadratic-cvx-comb}, and dropping the negative term from the right-hand side 
    \begin{equation}
        \frac{1 + \mu a_0 }{2(1-\beta)}\|\vu - \vv_1 \|_{\mLambda}^2 < (1 + \mu/L_1)\|\vu - \vu_0\|_{\mLambda}^2 + \frac{1 + \mu/L_1}{2}\|\vu_1 - \vv_0\|_\mLambda^2
    \end{equation}
    and we get
\begin{align*}
       &\; \sum_{k=1}^K \theta_k h_k(\vu) + \frac{\theta_K(1 + \mu a_K)}{4(1- \beta)}\|\vu - \vv_{K+1}\|_\mLambda^2\\
        <\; & (1 + \mu/L_1 )\|\vu - \vu_0 \|_{\mLambda}^2 + \frac{\eta^2 \beta^2 + 1 + \mu/L_1}{2}\|\vu_1 - \vu_0\|_{\mLambda}^2.
    \end{align*}
To complete the proof, it remains to note that by Lemma \ref{lemma:gap}, $\sum_{k=1}^K \theta_k h_k(\vu) \geq A_K \Gap(\hat{\vu}_K; \vu)$ and to argue that $\|\vu_1 - \vu_0\|_{\mLambda}^2 \leq 2 \|\vu^* - \vu_0\|_\mLambda^2,$ which is done in what follows. 
By the definition of $\vu_1$ and Fact \ref{fact:update-3PI}, we have
\begin{align*}
       &\;  a_0 \big(\innp{\mF(\vu_0), \vu_1 - \vu^*} + g(\vu_1) - g(\vu^*)\big)\\
       \leq \; & \frac{1}{2}\|\vu^* - \vu_0\|_{\mLambda}^2 - \frac{1}{2}\|\vu^* - \vu_1\|_{\mLambda}^2 - \frac{1}{2}\|\vu_0 - \vu_1\|_\mLambda^2. 
    \end{align*}
Rearranging the last inequality and using that  $\innp{\mF(\vu_1), \vu_1 - \vu^*} + g(\vu_1) - g(\vu^*) \geq 0$, \begin{align*}
        \frac{1}{2}\|\vu_1 - \vu_0\|_\mLambda^2 \leq a_0 \innp{\mF(\vu_0) - \mF(\vu_1), \vu^* - \vu_1} + \frac{1}{2}\|\vu^* - \vu_0\|_{\mLambda}^2 - \frac{1}{2}\|\vu^* - \vu_1\|_{\mLambda}^2.
    \end{align*}
As argued before (by Young's inequality and the definition of $L_1$), \[a_0 \innp{\mF(\vu_0) - \mF(\vu_1), \vu^* - \vu_1} \leq \frac{a_0^2 L_1^2}{2}\|\vu_1 - \vu_0\|_\mLambda^2 + \frac{1}{2}\|\vu^* - \vu_1\|_\mLambda^2,\] thus 
\begin{equation}\notag
         \frac{1}{2}\|\vu_1 - \vu_0\|_\mLambda^2 \leq \frac{a_0^2 L_1^2}{2}\|\vu_1 - \vu_0\|_\mLambda^2 + \frac{1}{2}\|\vu^* - \vu_0\|_{\mLambda}^2.
    \end{equation}
To complete the proof that $\|\vu_1 - \vu_0\|_\mLambda^2 \leq 2 \|\vu^* - \vu_0\|_\mLambda^2$, it remains to use $a_0^2 L_1^2 \leq 1/2,$ which holds by assumption. 
\end{proof}

\paragraph{Further implications under relaxed conditions} We briefly discuss here how our results apply even under more relaxed assumptions about the problem. In particular, convergence at rate $1/\theta_K$ (argued to be near-linear in the next section) can be guaranteed without assuming strong convexity for $g.$ What suffices is that $\mF$ satisfies a ``restricted strong monotonicity'' condition with a parameter $\mu>0$, meaning that there is a solution $\vu^*$ such that for all $\vu \in \dom(g)$ it holds:
\begin{equation}
    \innp{\mF(\vu) - \mF(\vu^*), \vu - \vu^*}  \geq \mu\|\vu -\vu^*\|_\mLambda^2.
\end{equation}
The reason is as follows. First, the only place in the analysis where ``monotonicity of $\mF$'' was used was to ensure that $\Gap(\vu_k; \vu) \leq h_k(\vu)$ for $\vu \geq 0.$ This condition still applies for $\vu = \vu^*;$ in fact, we have an even stronger condition that $\Gap(\vu_k; \vu) \leq h_k(\vu) - \mu \|\vu - \vu^*\|_\mLambda^2.$ Observe that if our algorithm is applied to a GMVI 
with an operator $\widehat{\mF}(\vu) = \mF(\vu) - \mu\mLambda(\vu - \vu^*)$ and regularizer $\widehat{g}(\vu) = g(\vu) + \mu \|\vu - \vu^*\|_\mLambda^2,$ it exhibits the exact same iterates as when applied to the original problem (determined by $\mF, g$). Now, for the problem with $\widehat{\mF}(\vu), \widehat{g}(\vu)$, we still maintain  $\Gap(\vu_k; \vu) \leq h_k(\vu)$, but now $\widehat{g}$ is $(2\mu)$-strongly convex. Thus (as will be argued in the sequel ), $\theta_k$ grows geometrically and $\|\vu^* - \vv_{k+1}\|_\mLambda^2 = \gO\big(\frac{\|\vu^* - \vu_0\|_\mLambda^2}{\theta_k}\big),$ so the convergence is near-linear.   

\section{Global Oracle Complexity Bound}\label{sec:rate}
In this section, we analyze the \emph{global} oracle complexity of \aduca, under the standard assumption  that $\mF$ is globally Lipschitz continuous.  
We do so under the simplified step size choice \eqref{eq:simple-step-size-conditions-unknown-mu}, since it does not require any knowledge of the problem parameters like Lipschitz constants or the strong convexity modulus $\mu.$ Observe that, for the fixed hyperparameters $\beta, \gamma, \rho$ (see \Cref{sec:algorithm} for an example of numerical values) and $\eta$ and $\tau$ defined as in \Cref{sec:algorithm}, the step size \eqref{eq:simple-step-size-conditions-unknown-mu} can simply be written as
\begin{equation}\label{eq:step-size-global}
    a_k = \min\Big\{\rho_0 a_{k-1}, s_k \sqrt{\frac{a_{k-1}}{a_{k-2}}}\Big\},
\end{equation}
where
\begin{equation}\label{eq:step-size-constants}
\begin{aligned}
     \rho_0 &:= \min\{\rho, \beta(1+\beta)(1-\gamma)\}, \quad && s_k := \min\Big\{\frac{C}{L_k},\, \frac{\Chat}{\Lhat_k}\Big\},\\
     C &:= \frac{\eta \rho}{2}\sqrt{\frac{\tau \beta}{3(1+\rho\beta)}},\quad  && \Chat:= \frac{\eta}{2}\sqrt{\frac{(1-\tau)\rho}{2}}.
\end{aligned}
\end{equation}
Here, both $C$ and $\Chat$ can be chosen as absolute (numerical) constants, independent of any problem parameters. 
Throughout this section, we denote upper bounds on the Lipschitz constants by $\hat L$ and $L$, i.e., 
$\hat L \geq \hat{L}_{k}$ and $L \geq L_k$ for all $k$. Observe that these global bounds on the Lipschitz constants imply, $\forall k \geq 1,$
\begin{equation}\label{eq:lb-on-sk}
    s_k \geq s_\infty := \min\Big\{\frac{C}{L},\, \frac{\Chat}{\Lhat}\Big\} = \Theta\Big(\min\Big\{\frac{1}{L},\, \frac{1}{\Lhat}\Big\}\Big). 
\end{equation}

\begin{algorithm}[t]
  \caption{Backtracking Initialization for \aduca} 
  \label{alg:init-robust}
  \begin{algorithmic}[1]
    \STATE \textbf{Input:} 
    $m$, $\vu_0$, $\mLambda$, $C$, $\Chat$
    \STATE \textbf{Initialize:} 
    $i = 0$, $\alpha = 2$ 
    \STATE $a_{0} = 1$
    \STATE $\vu_{1} = \argmin_{\vu}\Big\{a_{0}\innp{\mF(\vu_0), \vu} + a_{1} g(\vu) + \frac{1}{2}\|\vu - \vu_0\|^2_{\mLambda} \Big\}$
    \STATE $\mFt_{1}^{i} = \mF^{i}(\vu^{1}_{1}, \ldots , \vu^{i - 1}_{1}, \vu^{i}_{0}, \ldots , \vu^{m}_{0})$ $(i \in [m])$
    \STATE $\hat L_{1} = \frac{\|\mF(\vu_{1}) - \mFt_{1}\|_{\mLambda^{-1}}}{\|\vu_{1} - \vu_0\|_{\mLambda}}$,
           $L_{1} = \frac{\|\mF(\vu_{1}) - \mF(\vu_0)\|_{\mLambda^{-1}}}{\|\vu_{1} - \vu_0\|_{\mLambda}}$
    \STATE $a_{\mathrm{start}} = \min\Big\{\frac{C}{L_{1}}, \frac{\Chat}{\hat L_{1}}\Big\}$ 
    \REPEAT
        \STATE $a_0 = a_{\mathrm{start}}\alpha^{-i}$
        \STATE $\vu_{1} = \argmin_{\vu}\Big\{a_0\innp{ \mF(\vu_0), \vu} + a_0 g(\vu) + \frac{1}{2}\|\vu - \vu_0\|^2_{\mLambda} \Big\}$
        \STATE $L_1 = \frac{\|\mF(\vu_1) - \mF(\vu_0)\|_{\mLambda^{-1}}}{\|\vu_1 - \vu_0\|_{\mLambda}}$
        \STATE $i \gets i + 1$
    \UNTIL{$a_0 \leq \frac{1}{\sqrt{2}L_1}$}
    \RETURN $a_0$, $a_{-1} = a_0$
  \end{algorithmic}
\end{algorithm}

\subsection{Initialization}
We begin our discussion by arguing that \cref{alg:init-robust} correctly initializes \aduca\ (so \Cref{theorem:bound-gap} applies), at a logarithmic cost. This is summarized in the following lemma, for completeness. To avoid trivialities, we assume throughout that $L_1 + \Lhat_1 > 0.$ The algorithm can be modified to deal with the case that $L_1 + \Lhat_1 = 0$ (or is very close to zero) by preventing $a_0$ from being increased beyond some large threshold $M.$  The impact on the subsequent analysis is that any dependence on $s_\infty$ would need to be replaced by $M$ if it happens that $s_\infty \gg M.$ In turn, this would  lead to the oracle complexity of the algorithm scaling with $1/M,$ which can be seen as the ``easy'' case, where the algorithm converges within few iterations.  

\begin{lemma}\label{lemma:robust-initialization}
Let $a_0$ be the output of Algorithm~\ref{alg:init-robust}.
Define
\[
s_{\mathrm{pr}} \;:=\; \min\Bigl\{\frac{C}{L_{\mathrm{pr}}},\frac{\Chat}{\hat L_{\mathrm{pr}}}\Bigr\},
\]
where $L_{\mathrm{pr}}$ and $\hat L_{\mathrm{pr}}$ are $L_1$ and $\hat{L}_1$ computed before the backtracking repeat loop in \Cref{alg:init-robust} starts. 
Then, \Cref{alg:init-robust} terminates after finitely many backtracking steps and its output satisfies:
\(
\frac{s_\infty}{2} \;<\; a_0 \;\le\; \frac{1}{\sqrt{2}\,L_1}.
\)

Moreover, if $N_{\mathrm{bt}}$ denotes the number of iterations of the repeat loop, then
\[
N_{\mathrm{bt}}
\;\le\;
\Bigl\lceil \log_2\!\Bigl(\frac{s_{\mathrm{pr}}}{s_\infty}\Bigr)\Bigr\rceil
\;=\; \mathcal{O}\!\Bigl(\log\!\Bigl(\frac{s_{\mathrm{pr}}}{s_\infty}\Bigr)\Bigr).
\]
\end{lemma}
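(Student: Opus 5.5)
The plan is to compare every quantity computed in \Cref{alg:init-robust} with the global bounds $L \geq L_k$ and $\hat L \geq \hat L_k$ assumed throughout this section. These bounds hold for every trial point the loop computes, since each trial $\vu_1$ is one step from $\vu_0$ and we assume global Lipschitz continuity of $\mF$. In particular, $L_{\mathrm{pr}} \le L$ and $\hat L_{\mathrm{pr}} \le \hat L$, so $s_{\mathrm{pr}} \ge s_\infty$. Moreover, every value $L_1$ recomputed inside the repeat loop satisfies $L_1 \le L$.

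\textbf{Step 1: a sufficient condition for the exit test.} I will show that the test $a_0 \le \frac{1}{\sqrt 2 L_1}$ is guaranteed to pass once $a_0 \le s_\infty$. This needs the numerical fact $C \le 1/\sqrt{2}$. To check it, use $\eta^2 = \frac{\gamma(1+\beta)}{1+\beta^2} < 1$, which holds since $\gamma<1-\frac{1}{\beta(1+\beta)}$ makes $\gamma(1+\beta) < 1 \le 1+\beta^2$. Together with $\tau<1$ and $\rho\beta<1$, this gives
\[
C \le \tfrac{\rho}{2}\sqrt{\beta/3} < \tfrac{1}{2\sqrt{3\beta}} < \tfrac{1}{\sqrt 2},
\]
where the last step uses $\beta > \frac{\sqrt5-1}{2}$. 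Then $a_0 \le s_\infty \le C/L \le \frac{1}{\sqrt2 L} \le \frac{1}{\sqrt 2 L_1}$. This holds whatever trial point produced $L_1$, so the exit test passes.

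\textbf{Step 2: termination and the iteration count.} The trial values are $a_{\mathrm{start}}\alpha^{-i} = s_{\mathrm{pr}}2^{-i}$. By Step 1 the loop stops no later than the first index $i$ with $s_{\mathrm{pr}}2^{-i} \le s_\infty$, namely $i^\star = \lceil \log_2(s_{\mathrm{pr}}/s_\infty)\rceil$. This index is finite and nonnegative because $s_{\mathrm{pr}} \ge s_\infty > 0$, which uses the standing assumption $L_1+\hat L_1>0$. So the number of backtracking reductions is at most $\lceil \log_2(s_{\mathrm{pr}}/s_\infty)\rceil$. I will state the count as the number of halvings; it differs from the number of passes through the loop by the single initial pass.

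\textbf{Step 3: the two-sided bound on the output.} The upper bound $a_0 \le \frac{1}{\sqrt 2 L_1}$ is exactly the exit condition, with $L_1$ evaluated at the returned $\vu_1$. For the lower bound I split into two cases.
\begin{itemize}
\item If the loop exits at $i=0$, then $a_0 = s_{\mathrm{pr}} \ge s_\infty > s_\infty/2$.
\item Otherwise the previous trial $2a_0$ failed the test. By the contrapositive of Step 1, $2a_0 > s_\infty$, hence $a_0 > s_\infty/2$.
\end{itemize}

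The only delicate point, which I expect to be the main obstacle, is that the trial point $\vu_1$, and hence $L_1$, changes with $a_0$. Steps 1 and 3 are therefore valid only because they compare each trial against the uniform global bound $L$, never against some other trial's $L_1$. I must also verify the constant inequality $C \le 1/\sqrt 2$ carefully from the hyperparameter ranges in \eqref{eq:hyperparameters}.
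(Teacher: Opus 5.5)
Your proposal is correct and follows the paper's proof step for step. The paper also uses $L_{\mathrm{pr}}\le L$ and $\hat L_{\mathrm{pr}}\le \hat L$ to get $s_{\mathrm{pr}}\ge s_\infty$, shows that every trial $a\le s_\infty$ passes the exit test via $a L_1(a)\le s_\infty L\le C\le 1/\sqrt{2}$, takes the upper bound from the exit condition, and gets the lower bound from the failed test at $2a_0$. Your explicit check of $C\le 1/\sqrt{2}$ from the hyperparameter ranges, and your remark that $\lceil\log_2(s_{\mathrm{pr}}/s_\infty)\rceil$ bounds the number of halvings (one fewer than the passes through the repeat loop), fill in details the paper leaves implicit.
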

\begin{proof}
From the definition of $C$, we obtain $C \le 1/\sqrt{2}$. By the assumed global bounds, we have $L_{\mathrm{pr}}\le L$ and $\hat L_{\mathrm{pr}}\le \hat L$, hence
\(
    s_{\mathrm{pr}}= \min\bigl\{\frac{C}{L_{\mathrm{pr}}},\,\frac{\Chat}{\hat L_{\mathrm{pr}}}\bigr\} 
    \;\ge\; \min\bigl\{\frac{C}{L},\,\frac{\Chat}{\hat L}\bigr\}
    = s_\infty.
\) 
Algorithm~\ref{alg:init-robust} initializes the backtracking loop at $a_0=s_{\mathrm{pr}}$.

\paragraph{Termination and logarithmic backtracking complexity}
After $t$ halving updates, the candidate step size is 
\(
a_t \;=\; \frac{s_{\mathrm{pr}}}{2^t}.
\) 
Let
\(
t^\star \;:=\; \bigl\lceil \log_2\!\bigl(\frac{s_{\mathrm{pr}}}{s_\infty}\bigr)\bigr\rceil.
\) 
Then $a_{t^\star} \le s_\infty$. For any candidate $a\le s_\infty$, using $L_1(a)\le L$,
we get
\(
a_{t^\star}L_1(a_{t^\star}) \le a_{t^\star}L \le s_\infty L \le C \le \frac{1}{\sqrt{2}},
\) 
and therefore $a_{t^\star}\le 1/(\sqrt{2}L_1(a_{t^\star}))$. Hence, the stopping test must be satisfied no later than
iteration $t^\star$, implying finite termination and
\(
N_{\mathrm{bt}} \le t^\star = \bigl\lceil \log_2\!\bigl(\frac{s_{\mathrm{pr}}}{s_\infty}\bigr)\bigr\rceil.
\)

\paragraph{Upper bound}
Algorithm~\ref{alg:init-robust} halts only after the test
$a_0 \le 1/(\sqrt{2}L_1)$ holds at the returned pair $(a_0,\vu_1)$; therefore
$a_0 \le 1/(\sqrt{2}L_1)$.

\paragraph{Lower bound}
If the algorithm terminates without any halving update, then $a_0=s_{\mathrm{pr}}\ge s_\infty$,
hence $a_0>s_\infty/2$. 
Otherwise, let $a_{\mathrm{prev}}$ denote the step size immediately before the final halving update,
so that $a_0 = a_{\mathrm{prev}}/2$ and the stopping test failed at $a_{\mathrm{prev}}$.
By the argument above, every $a\le s_\infty$ satisfies the stopping test. Hence a failed test implies
$a_{\mathrm{prev}}>s_\infty$, and thus $a_0=a_{\mathrm{prev}}/2>s_\infty/2$, completing the proof.
\end{proof}

\subsection{Growth of sequences and global convergence bound}

To characterize the global oracle complexity of \aduca, the first step is to argue that, on average, step sizes $a_k$ are (at least) of the order of $s_\infty.$ Such a statement then immediately implies that $A_K$ is at least of the order $K s_\infty,$ leading to the bound $\Gap(\hat{\vu}_K;\vu) = \cO\big(\frac{(L + \Lhat)(\|\vu - \vu_0\|_\mLambda^2 + \|\vu^* - \vu_0\|_\mLambda^2)}{K}\big)$ implied by \Cref{theorem:bound-gap}. 

We begin with a simple claim, which is a consequence of arithmetic mean-geo\-metric mean (AM-GM) inequality, but will come useful in our subsequent analysis.

\begin{claim}\label{claim:AM-GM}
    For any $k \geq 2,$ if $a_k = s_k \sqrt{a_{k-1}/a_{k-2}},$ then $a_k + a_{k-2} \geq 2 s_k/\sqrt{\rho_0}.$ 
\end{claim}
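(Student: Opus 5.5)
We use AM-GM together with the growth cap $a_{k-1}\le\rho_0 a_{k-2}$, which holds by the step size rule \eqref{eq:step-size-global} for every $k\ge 2$ (this applies at index $k-1\ge 1$, with the convention $a_{-1}=a_0$ from \Cref{alg:init-robust} when $k=2$).

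The first step is the AM-GM inequality for the positive numbers $a_k$ and $a_{k-2}$:
\begin{equation}\notag
a_k + a_{k-2} \;\ge\; 2\sqrt{a_k a_{k-2}}.
\end{equation}
Next we substitute the hypothesis $a_k = s_k\sqrt{a_{k-1}/a_{k-2}}$ into the product, which gives
\begin{equation}\notag
a_k a_{k-2} = s_k\sqrt{a_{k-1}a_{k-2}}.
\end{equation}
This is a product of $s_k$ with a geometric mean of consecutive step sizes. To compare it with $s_k^2$, we use the hypothesis once more in the form $\sqrt{a_{k-1}/a_{k-2}} = a_k/s_k$, so that $a_{k-1}a_{k-2} = (a_k/s_k)^2 a_{k-2}^2$. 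This is circular, so instead we exploit the rule directly.

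Since $a_k\le\rho_0 a_{k-1}$ by the step size rule at index $k$, we have $s_k\sqrt{a_{k-1}/a_{k-2}}\le\rho_0 a_{k-1}$. Squaring gives $s_k^2 a_{k-1}\le \rho_0^2 a_{k-1}^2 a_{k-2}$, so $s_k^2\le\rho_0^2 a_{k-1}a_{k-2}$. This yields
\begin{equation}\notag
\sqrt{a_{k-1}a_{k-2}} \;\ge\; s_k/\rho_0.
\end{equation}
Hence $a_k a_{k-2}\ge s_k^2/\rho_0$, and AM-GM gives $a_k+a_{k-2}\ge 2s_k/\sqrt{\rho_0}$, which is exactly the claim.

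The only potential obstacle is finding the right way to eliminate $a_{k-1}$: AM-GM alone leaves $\sqrt{a_{k-1}a_{k-2}}$, and bounding it from below requires the cap at the current index $k$ (not $k-1$). One also needs to confirm that the step size rule \eqref{eq:step-size-global} indeed enforces $a_k\le\rho_0 a_{k-1}$; it does, since $a_k$ is defined as a minimum that includes $\rho_0 a_{k-1}$. No other facts are required, and positivity of all $a_j$ (guaranteed by \Cref{lemma:robust-initialization} and the positivity of $s_k$) justifies the square roots.
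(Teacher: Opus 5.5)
Your proof is correct and is essentially the paper's argument: AM-GM on $a_k, a_{k-2}$, combined with the cap $a_k \le \rho_0 a_{k-1}$ at the current index. The paper eliminates $a_{k-1}$ through the equivalent rearrangement $a_k a_{k-2} = s_k^2 a_{k-1}/a_k$, and the cap $a_{k-1}\le\rho_0 a_{k-2}$ announced in your first paragraph, as well as the abandoned ``circular'' detour, are never used and can be deleted.
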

\begin{proof}
    A rearrangement of $a_k = s_k \sqrt{a_{k-1}/a_{k-2}}$ gives $\sqrt{a_k a_{k-2}} = s_k \sqrt{a_{k-1}/a_k}.$ By the step size definition \eqref{eq:step-size-global}, $a_k \leq \rho_0 a_{k-1},$ so $\sqrt{a_k a_{k-2}} \geq s_k \sqrt{\rho_0}.$ It remains to use the AM-GM inequality, by which $(a_k + a_{k-2})/2 \geq \sqrt{a_k a_{k-2}}.$ 
\end{proof}

\begin{theorem}\label{thm:global-convergence}
    Given any $\epsilon > 0,$ the output of \aduca\ run for a sufficiently large number of iterations
    \begin{equation}\notag
    \begin{aligned}
        K = \cO\Big(&\min\Big\{\frac{(L + \Lhat)(\|\vu - \vu_0\|_\mLambda^2 + \|\vu^* - \vu_0\|_\mLambda^2)}{\epsilon},\\ &\frac{L + \Lhat}{\mu}\log\Big(\frac{\|\vu^* - \vu_0\|_\mLambda}{\epsilon}\Big)\log\Big(\frac{(L + \Lhat)(\|\vu - \vu_0\|_\mLambda^2 + \|\vu^* - \vu_0\|_\mLambda^2}{\epsilon}\Big)\Big\}\Big)
    \end{aligned}
    \end{equation}
    guarantees either  $\Gap(\hat{\vu}_K;\vu) \leq \epsilon$ for $\vu \in S$ or $\|\vu^* - \vv_{K+1}\|_\mLambda \leq \epsilon$.  
\end{theorem}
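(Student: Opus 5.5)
Everything will be derived from \Cref{theorem:bound-gap}. That theorem gives $\Gap(\hat\vu_K;\vu) \le D/A_K$ with $D := (1+\mu/L_1)\|\vu-\vu_0\|_\mLambda^2 + (\eta^2\beta^2+1+\mu/L_1)\|\vu^*-\vu_0\|_\mLambda^2$. It also gives $\|\vu^*-\vv_{K+1}\|_\mLambda^2 \le 4(1-\beta)D^*/\theta_K$, where $D^*$ is $D$ evaluated at $\vu=\vu^*$. By \Cref{lemma:robust-initialization}, the initialization satisfies $s_\infty/2 < a_0 \le 1/(\sqrt2 L_1)$. So the whole argument reduces to two lower bounds: $A_K = \Omega(K s_\infty)$ in the monotone case, and geometric growth of $\theta_K$ in the strongly monotone case. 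The minimum in the statement of $K$ then follows, since whichever bound is smaller suffices.

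\textbf{Step 1 (step sizes are on average $\Omega(s_\infty)$).} I will show that every window of a constant number of consecutive indices contains some $a_j \gtrsim s_\infty$.

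From \eqref{eq:step-size-global}, each $a_k$ is either $\rho_0 a_{k-1}$ or $s_k\sqrt{a_{k-1}/a_{k-2}}$.

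In the second case, \Cref{claim:AM-GM} gives $a_k + a_{k-2} \ge 2s_\infty/\sqrt{\rho_0}$. Hence $\max\{a_k,a_{k-2}\} \ge s_\infty/\sqrt{\rho_0}$.

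In the first case the step grows by the factor $\rho_0$. I need to check that $\rho_0>1$ under the hyperparameter constraints; if it is not, $a_k$ still equals $\rho_0 a_{k-1}$ only when that quantity is the smaller option. Growth cannot continue forever without the step exceeding $s_\infty$. More precisely, if $a_{k-1}\ge s_\infty$ already holds, the growth branch keeps $a_k \ge s_\infty$ only if $\rho_0\ge1$.

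The delicate point is ruling out long runs where the min is attained by $\rho_0 a_{k-1}$ while the $a$'s stay tiny. To handle it, I will track $r_k := a_k/a_{k-1}$. On the $s$-branch, $a_k^2 a_{k-2} = s_k^2 a_{k-1}$. This gives a recursion in log-coordinates, $\log a_k = \log s_k + \tfrac12(\log a_{k-1}-\log a_{k-2})$, and I will use it to show that $\log a_k$ cannot drift to $-\infty$ relative to $\log s_\infty$. Combining this with the base case $a_0 > s_\infty/2$ should give $\sum_{k\le K} a_k \ge c K s_\infty$. Summing pairwise through \Cref{claim:AM-GM}, and handling the indices that fall in the growth branch separately, gives $\sum_{k=1}^K a_k = \Omega(K s_\infty)$.

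Since $\theta_k\ge1$, this yields $A_K \ge \sum_k a_k = \Omega(K s_\infty)$. Then $K = \cO(D/(s_\infty\epsilon))$ with $1/s_\infty = \Theta(L+\Lhat)$ gives the first term in the statement. The factor $\mu/L_1$ inside $D$ is absorbed into the constant, or handled using $a_0\le 1/L_1$.

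\textbf{Step 2 (strongly monotone case).} By the definition of $\omega_k$,
\[
1/\omega_k = \frac{1+\mu a_k}{1+\rho\beta\mu a_k} \ge 1 + c\,\min\{\mu a_k, 1\},
\qquad c = \Theta(1-\rho\beta).
\]
Therefore $\log\theta_K \gtrsim \sum_{k<K}\min\{\mu a_k,1\}$. Applying Step 1 windowwise, this sum is $\Omega(K\min\{\mu s_\infty,1\})$. So $K = \cO\big(\frac{L+\Lhat}{\mu}\log(D^*/\epsilon^2)\big)$ iterations make $\|\vu^*-\vv_{K+1}\|_\mLambda\le\epsilon$.

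The extra logarithmic factor in the statement comes from the $\log^2$ regime. The geometric growth argument can only be certified after the step sizes have recovered, which costs up to $\log((L+\Lhat)D/\epsilon)$ warm-up cycles per phase. Alternatively, one can take a restart-style accounting that multiplies the two logarithms. I will follow whichever bookkeeping Step 1 naturally produces; the crude product bound is sufficient for the claim.

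\textbf{Main obstacle.} The hard part is Step 1: controlling the interplay between the growth cap $\rho_0 a_{k-1}$ and the geometric-mean recursion, so that no long sequence of tiny steps can occur. I would first try a potential argument on $\log(a_k/s_\infty)$ using the two-step recursion above. If that fails to give a clean constant, I would fall back to bounding the number of "bad" indices by $\cO(\log(s_\infty/\min_k a_k))$. That fallback is exactly where the extra $\log(\cdot/\epsilon)$ factor in the second term would enter.
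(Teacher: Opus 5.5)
Your reduction to lower bounds on $A_K$ and $\theta_K$ via \Cref{theorem:bound-gap} is fine, but Step 1 as planned fails. The intermediate claim that every window of a constant number of consecutive indices contains some $a_j\gtrsim s_\infty$ is false. The recursion \eqref{eq:step-size-global}, with arbitrary $s_j\ge s_\infty$, lets steps become large while the local estimates $L_j,\Lhat_j$ are small. For instance, take $a_{k-3}=M/\rho_0$, $a_{k-2}=M\gg s_\infty$, and then $s_{k-1}=s_k=\cdots=s_\infty$. One gets $a_{k-1}=s_\infty\sqrt{\rho_0}$ and $a_k=\rho_0^{1/4}s_\infty\sqrt{s_\infty/M}$. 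Afterwards the growth branch is active, with $a_j<s_\infty/2$ for about $\tfrac12\log_{\rho_0}(M/s_\infty)$ consecutive iterations. So long runs of small steps cannot be excluded, and your log-recursion only shows the steps do not drift to $-\infty$, not that dips are short.

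The missing idea is that a long run is \emph{paid for} by the step two indices before it. Let $J_k=\{k,\dots,k+t-1\}$ be a maximal run with $a_j<s_\infty/2$ and $a_{k-1}\ge s_\infty/2$. Then:
\begin{itemize}
\item Index $k$ must be on the $s$-branch, since the growth branch would give $a_k\ge a_{k-1}$.
\item Your consequence of \Cref{claim:AM-GM}, namely $\max\{a_j,a_{j-2}\}\ge s_\infty/\sqrt{\rho_0}>s_\infty/2$ on the $s$-branch, forces every $j\ge k+2$ in $J_k$ onto the growth branch.
\item Unwinding the recursion (two cases, according to the branch taken at $k+1$) gives $a_{k-2}\ge(s_\infty/2)\rho_0^{2t}$.
\end{itemize}
Since $\rho_0^{2t}\ge C't$ for a constant $C'>0$, the single index $k-2$ covers the whole run in $A_K$. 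This is how the paper gets $A_K=\Omega(Ks_\infty)$.

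Step 2 inherits the false windowwise claim, and substituting the correct averaged bound does not repair it. The bound $\sum_k a_k=\Omega(Ks_\infty)$ gives no lower bound on $\sum_k\min\{\mu a_k,1\}$. The mass may sit in a few huge steps, each contributing only $\cO(1)$ to $\log\theta_K$. This is exactly why the theorem has an either/or conclusion and a product of two logarithms, and your proposal derives neither. The ``warm-up phases'', ``restart-style accounting'', and $\log(s_\infty/\min_k a_k)$ explanations are not arguments, and $\min_k a_k$ has nothing to do with $\epsilon$.

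The paper instead splits on the number $|I_1|$ of indices with $a_j\ge s_\infty/2$:
\begin{itemize}
\item If $|I_1|\gtrsim\big(1+\frac{1}{\mu s_\infty}\big)\log\big(\|\vu^*-\vu_0\|_\mLambda/\epsilon\big)$, then $\theta_K$ is large enough that $\|\vu^*-\vv_{K+1}\|_\mLambda\le\epsilon$.
\item Otherwise the bad indices contain a run of length of order $K/(|I_1|+1)$. The compensation bound above then gives $A_K\ge(s_\infty/2)\rho_0^{\Omega(K/(|I_1|+1))}$. Hence the gap is at most $\epsilon$ once $K\gtrsim(|I_1|+1)\log_{\rho_0}\big(\frac{\|\vu-\vu_0\|_\mLambda^2+\|\vu^*-\vu_0\|_\mLambda^2}{s_\infty\epsilon}\big)$.
\end{itemize}
Combining the two regimes yields the second term in the statement.
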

\begin{proof}
    Using \Cref{theorem:bound-gap}, we have that $\Gap(\hat{\vu}_K;\vu) = \cO\big(\frac{\|\vu - \vu_0\|_\mLambda^2 + \|\vu^* - \vu_0\|_\mLambda^2}{A_K}\big)$ and $\|\vu^* - \vv_{K+1}\|_\mLambda^2 = \cO\big(\frac{\|\vu^* - \vu_0\|_\mLambda^2}{\theta_K}\big),$ so to prove the theorem claim, it suffices to appropriately bound below $A_K$ and $\theta_K$. 
    
    We begin by arguing that $A_K$ is at least of the order $K(L + \Lhat),$ which applies for any $\mu \geq 0.$ Observe first that, by initialization (see {\cref{lemma:robust-initialization}}) and the step size rule \eqref{eq:step-size-global}, we have that $a_1 \geq a_0 \geq s_\infty/2.$ Let $k \geq 2$ be the smallest index such that $a_k < s_\infty/2$; if no such index exists, then $A_K \geq K s_\infty/2,$ and we are done, so suppose this is not the case. Observe that, since $k$ is the smallest such index, it must be $a_k = s_k \sqrt{a_{k-1}/a_{k-2}}$ and $a_{k-1} \geq s_{\infty}/2.$ {Let $J_k$ be the longest sequence of indices $k,k+1,k+2,\dots$ such that for each
$j\in J_k$, $a_j < s_\infty/2$. Write $|J_k|=t$ so that $J_k=\{k,k+1,\dots,k+t-1\}$. Without loss of generality, assume $t \ge 2$.

We first claim that if $|J_k| \ge 3$, then for every $j\in J_k$ with $j\ge k+2$, the step size update \eqref{eq:step-size-global} must take the growth branch, i.e.,
\begin{equation}\label{eq:aj-growth-branch} a_j=\rho_0 a_{j-1},\qquad \forall j\in J_k\cap\{k+2,k+3,\dots\}.
\end{equation}

Thus, the only index in $J_k$ for which the update \eqref{eq:step-size-global} may take either branch (either term in the step size definition) is
$j=k+1$. We now distinguish two cases.

\medskip\noindent\textbf{Case 1: $a_{k+1}=\rho_0 a_k$.}
Then by \eqref{eq:aj-growth-branch}, $a_{k+i}=\rho_0^i a_k$ for all $i=0,1,\dots,t-1$, hence
$a_{k+t-1}=\rho_0^{t-1}a_k<s_\infty/2$.
Using $a_k=s_k\sqrt{a_{k-1}/a_{k-2}}\ge s_\infty\sqrt{a_{k-1}/a_{k-2}}$, we get
\[
\rho_0^{t-1}\ <\ \frac{s_\infty}{2a_k}\ \le\ \frac{1}{2}\sqrt{\frac{a_{k-2}}{a_{k-1}}},
\]
and therefore $a_{k-2}\ge 4a_{k-1}\rho_0^{2t-2}\ge (s_\infty/2)\rho_0^{2t}$, where
we used $a_{k-1}\ge s_\infty/2$ and $\rho_0<2$ in the last step.

\medskip\noindent\textbf{Case 2: $a_{k+1}=s_{k+1}\sqrt{a_k/a_{k-1}}<s_\infty/2$.}
In this case $t\ge 2$, and by \eqref{eq:aj-growth-branch} we have $a_{k+i}=\rho_0^{i-1}a_{k+1}$ for
$i=1,2,\dots,t-1$, so
$a_{k+t-1}=\rho_0^{t-2}a_{k+1}<s_\infty/2$, i.e.,
\[
a_{k+1}\ <\ \frac{s_\infty}{2}\rho_0^{-(t-2)}.
\]
Moreover, since $a_k=s_k\sqrt{a_{k-1}/a_{k-2}}$ and $a_{k+1}=s_{k+1}\sqrt{a_k/a_{k-1}}$
(the second branch is taken at both $k$ and $k+1$), eliminating $a_{k-1}$ gives
\[
a_{k-2}=\frac{s_k^2s_{k+1}^2}{a_k a_{k+1}^2}.
\]
Using $s_k,s_{k+1}\ge s_\infty$ and $a_k<s_\infty/2$ yields
$a_{k-2}\ge 2s_\infty^3/a_{k+1}^2$; combining with the above bound on $a_{k+1}$ gives
$a_{k-2}\ge 8s_\infty\rho_0^{2t-4}\ge (s_\infty/2)\rho_0^{2t}$ (again using $\rho_0<2$).

Combining the two cases, we conclude that
\begin{equation}\label{eq:lb-on-ak-2}
a_{k-2}\ \ge\ (s_\infty/2)\rho_0^{2|J_k|}.
\end{equation}

Now, even if we exclude $a_{k-2}$, the first $k-1$ step sizes are sufficiently large
on average, as $A_{k-1}-a_{k-2}\ge (k-2)(s_\infty/2)$. On the other hand, from \eqref{eq:lb-on-ak-2},
\begin{equation}\label{eq:global-lb-on-ak-2}
a_{k-2}\ \ge\ (s_\infty/2)|J_k|\cdot \frac{\rho_0^{2|J_k|}}{|J_k|}
\ \ge\ (s_\infty/2)|J_k|\inf_{t\ge 1}\frac{\rho_0^{2t}}{t}.
\end{equation}
Since $\rho_0>1$, $\inf_{t\ge 1}\rho_0^{2t}/t$ is bounded below by a universal constant
$C'>0$. Thus,
$A_{k+|J_k|-1}\ge A_{k-1}\ge (k-2+C'|J_k|)(s_\infty/2)$, which is of the claimed order.
We can repeat the same argument treating the iterate $a_{k+|J_k|}$ as the initial one,
and repeat the same reasoning to conclude that $A_K$ must scale with $Ks_\infty$, which leads to the claimed order-$(1/\epsilon)$
bound on the number of iterations.
} 
Suppose now that $\mu > 0$. Partition the iterations $\{1, 2, \dots, K\}$ into sets $I_1$ and $I_2,$ where $I_1$ contains all the iteration indices $j$ such that $a_j \geq s_\infty/2$ and $I_2$ contains the rest. Recall (from \eqref{eq:theta-def}, \eqref{eq:omega-def}) that 
    \begin{equation}\notag 1/\theta_K = \prod_{k=0}^{K-1}\omega_k = \prod_{k=0}^{K-1}\Big(1 - \frac{(1 -\rho\beta)\mu a_{k}}{1 + \mu a_k}\Big).\end{equation}
    By construction, $\omega_k \in (0, 1]$ for all $k \geq 0$ (since $\rho\beta \in (0, 1)$), and thus
\begin{equation}\label{eq:conv-through-theta}
        1/\theta_K \leq \prod_{k\in I_1}\Big(1 - \frac{(1 -\rho\beta)\mu a_{k}}{1 + \mu a_k}\Big) \leq \Big(1 - \frac{(1 -\rho\beta)\mu (s_\infty/2)}{1 + \mu (s_\infty/2)}\Big)^{|I_1|}.
    \end{equation}
We can now conclude from \eqref{eq:conv-through-theta} and $\|\vu^* - \vv_{K+1}\|_\mLambda^2 = \cO\big(\frac{\|\vu^* - \vu_0\|_\mLambda^2}{\theta_K}\big)$ that whenever \[|I_1| \geq \frac{2 + \mu s_\infty}{(1 -\rho\beta)\mu (s_\infty/2)}\ln\big(\frac{\|\vu^* - \vu_0\|_{\mLambda}}{\epsilon}\big),\] it must be $\|\vu^* - \vu_0\|_{\mLambda} \leq \epsilon.$
    
    Suppose now that $|I_1| < \frac{2 + \mu s_\infty}{(1 -\rho\beta)\mu (s_\infty/2)}\ln\big(\frac{\|\vu^* - \vu_0\|_{\mLambda}}{\epsilon}\big).$ Observe that the set $I_2$ must contain at least one sequence of iteration indices $J$ of length at least $|J| \geq \lfloor \frac{K}{|I_1| + 1} \rfloor.$ By the argument above (recall the derivation of \eqref{eq:global-lb-on-ak-2}), there exists at least one $j \in \{1, \dots, K\}$ such that $a_j \geq (s_\infty/2){\rho_0}^{2|J|}.$ As a consequence,
\begin{equation}\label{eq:mu>0A_K-bnd}
        A_K \geq (s_\infty/2){\rho_0}^{2|J|} \geq (s_\infty/2){\rho_0}^{2\lfloor \frac{K}{|I_1| + 1} \rfloor}.
    \end{equation}
Thus, for a universal constant $C'' > 0$ and $K \geq C''(|I_1|+1)\log_{\rho_0}\big(\frac{\|\vu - \vu_0\|_\mLambda^2 + \|\vu^* - \vu_0\|_\mLambda^2}{s_\infty \epsilon}\big)$, it holds $\Gap(\hat{\vu}_K;\vu) \leq \epsilon.$ It remains to combine with the bound on $|I_1|$ and simplify. 
\end{proof}

 \section{Numerical Experiment}\label{sec:num-exp}
In this section, we empirically evaluate \aduca\ and compare it with several representative comparable methods\footnote{All code is available at \url{https://github.com/Yee-Millennium/ADUCA}.}: Proximal Cyclic Coordinate Method (\pccm), Cyclic cOordinate Dual avEraging with extRapolation (\coder)~\cite{SongDiakonikolas2023CODER}, 
a parameter-free variant of \coder\ based on per-epoch line-search (\texttt{CODER-LineSearch})~\cite{SongDiakonikolas2023CODER}, and 
the Golden RAtio ALgorithm (\graal)~\cite{malitsky2020golden}. \pccm\ is the straightforward cyclic proximal update for~\eqref{eq:prob}; it is a natural analogue of cyclic coordinate descent for convex minimization, but it is not guaranteed to converge for general monotone variational inequalities and thus serves as a simple reference point. \coder\ is, to our knowledge, the first cyclic coordinate method with provable convergence guarantees for~\eqref{eq:prob}, while 
\texttt{CODER-LineSearch} makes \coder\ adaptive by performing a line search in every epoch, at the cost of additional operator evaluations. \graal~is a full-operator method that is adaptive to local Lipschitz geometry and does not require prior knowledge of Lipschitz constants.

In all experiments except~\Cref{subsec:ablation} (ablation analysis), we set $\beta = 0.8$, $\gamma = 0.2$, and $\rho = 1.2$ for \aduca. We use the simplified step size rule defined in~\eqref{eq:fully-spec-step-size-exp}. For 
other
methods in comparison, we tune their parameters within the ranges suggested in the respective papers and report the best-performing configuration.

\subsection{Support Vector Machine} \label{subsec:svm}
\begin{figure}[h]
    \hspace*{\fill}
    \subfloat[\texttt{a9a}]{\includegraphics[width=0.32\textwidth]{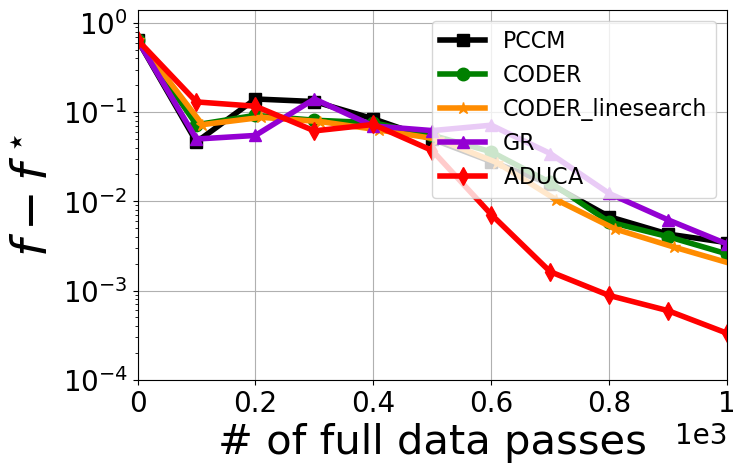}}\hfill
    \hspace*{\fill}
    \subfloat[\texttt{gisette}]{\includegraphics[width=0.32\textwidth]{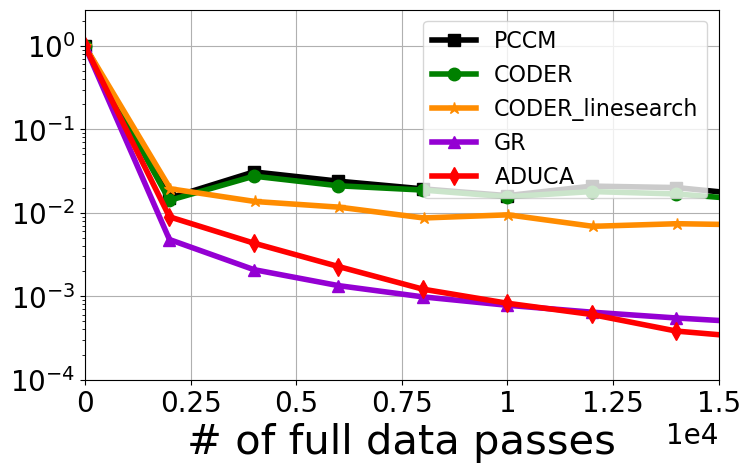}}\hfill
    \hspace*{\fill}
    \subfloat[\texttt{SUSY}]{\includegraphics[width=0.32\textwidth]{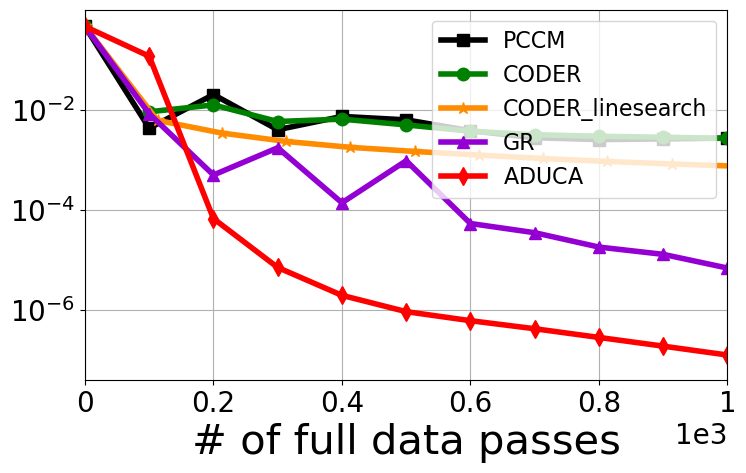}\label{fig:iteations_SUSY}}\hfill
    \hspace*{\fill}
    \subfloat[\texttt{a9a-rescaled}]{\includegraphics[width=0.32\textwidth]{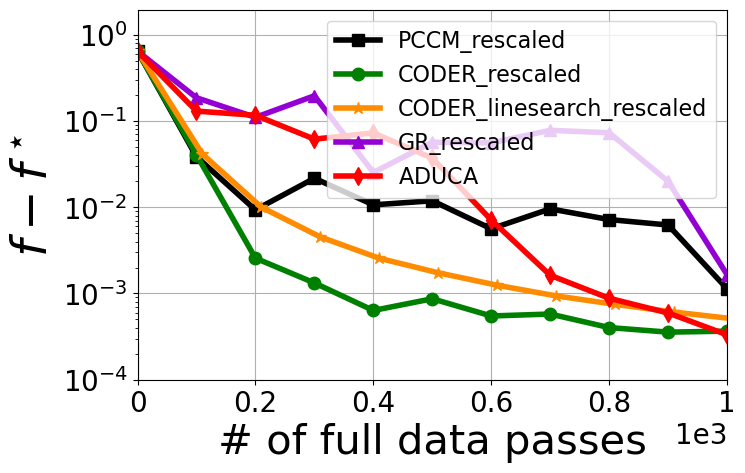}\label{fig:iteations_a9a_rescaled}}\hfill
    \hspace*{\fill}
    \subfloat[\texttt{gisette-rescaled}]{\includegraphics[width=0.32\textwidth]{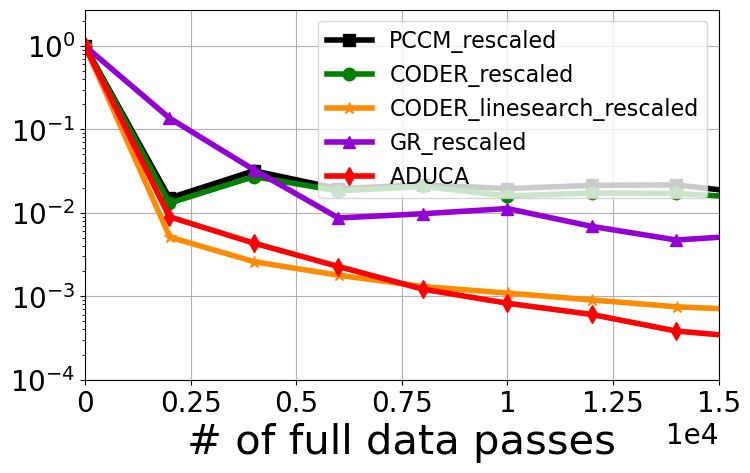}\label{fig:iteations_w8a_rescaled}}\hfill
    \hspace*{\fill}
    \subfloat[\texttt{SUSY-rescaled}]{\includegraphics[width=0.32\textwidth]{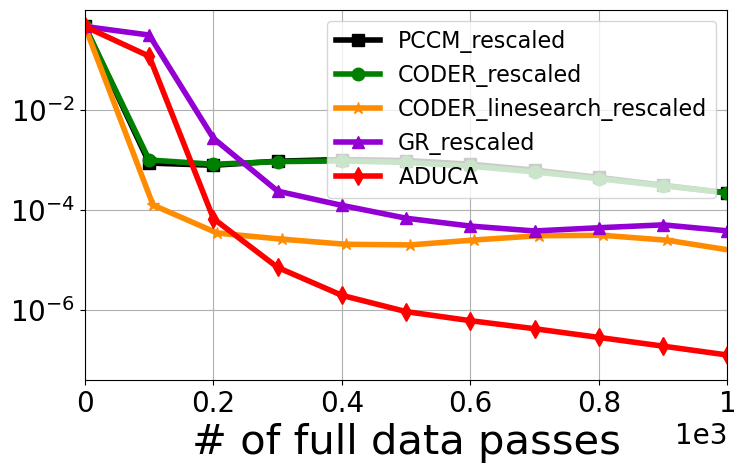}\label{fig:iteations_SUSY_rescaled}}\hfill
\caption{Primal objective suboptimality $f(\vx)-f^\star$ for~\Cref{problem:SVM} versus the number of full data passes on \texttt{a9a}, \texttt{gisette}, and \texttt{SUSY-test}. Plots (a)--(c) use the original formulation, while plots (d)--(f) use the diagonally rescaled formulation for every algorithm induced by the same diagonal matrix $\mLambda$ defined in~\eqref{eq:Lambda_svm}--\eqref{eq:Lambda_svm_entries}. In these figures, \aduca\ uses $\mLambda$ throughout (a)--(f).} \label{fig:SVM}
\end{figure}

We consider the convex elastic net-regularized support vector machine model, formulated as the following min-max  problem:
  \begin{equation} \label{problem:SVM} \tag{SVM}
      \min_{\vx \in \R^{d}} \max_{\vy \in \R^n}\frac{1}{n}\sum_{i=1}^{n}y_{i}(-1 + b_i \mA_i^{\top} \vx) + \lambda_1\| \vx \|_{1} + \frac{\lambda_2}{2}\| \vx \|^2_{2} + \sum_{j=1}^{n}\iota_{[-1,0]}(y_j),
  \end{equation}
where $\mA = [\mA_1, \ldots, \mA_n]^{\top} \in \R^{n \times d}$, $\vb \in \R^{n}$, and $\iota_{[-1,0]}(\cdot)$ is the convex indicator function of the interval $[-1,0]$. Let $\bar{\mA} = [b_1 \mA_1, \ldots, b_n \mA_n]$. This problem is an instance of~\Cref{eq:prob} with $F(\vx, \vy) = \frac{1}{n}[\bar{\mA} \vy,  \1 - \bar{\mA}^{\top}\vx] \in \R^{d+n}$, and $g(\vx, \vy) = \lambda_1 \| \vx \|_{1} + \frac{\lambda_2}{2}\| \vx \|^2_{2} + \sum_{j=1}^{n}\iota_{[-1,0]}(y_j)$. We set $\lambda_1 = 10^{-4}$ and $\lambda_2 = 10^{-4}$. Let $\vu=(\vx,\vy)\in\R^{d}\times\R^{n}$. We endow $\R^{d+n}$ with a diagonal rescaling matrix
\begin{equation}\label{eq:Lambda_svm}
\mLambda \;:=\;
\begin{bmatrix}
\mLambda_\vx & 0\\
0 & \mLambda_\vy
\end{bmatrix}
\in\R^{(d+n)\times(d+n)},\;
\mLambda_{\vx}=\text{Diag}(\lambda^\vx_1,\ldots,\lambda^\vx_d),\;
\mLambda_{\vy}=\text{Diag}(\lambda^\vy_1,\ldots,\lambda^\vy_n),
\end{equation}
with diagonal entries defined by the reciprocal $\ell_2$-norms of the rows/columns of $\bar A$:
\begin{equation}\label{eq:Lambda_svm_entries}
\begin{aligned}
    \lambda^\vx_j :=
    \begin{cases}
    \|\bar \mA_{j,:}\|_2^{-1}, & \text{if }\|\bar \mA_{j,:}\|_2>0,\\[2pt]
    1, & \text{if }\|\bar \mA_{j,:}\|_2=0,
    \end{cases}
    \quad j=1,\ldots,d, \\
    \lambda^\vy_i :=
    \begin{cases}
    \|\bar \mA_{:,i}\|_2^{-1}, & \text{if }\|\bar \mA_{:,i}\|_2>0,\\[2pt]
    1, & \text{if }\|\bar \mA_{:,i}\|_2=0,
    \end{cases}
    \quad i=1,\ldots,n.
\end{aligned}
\end{equation}
We evaluate all methods on the LibSVM~\cite{chang2011libsvm} datasets \texttt{a9a}~$(d=123, n=32561)$, \texttt{gisette}~$(d=5000, n=6000)$, and \texttt{SUSY}~$(d=18, n=5000000)$. 

To quantify progress, we report the primal gap $f(\vx)-f^\star$, where $f$ is the primal objective obtained by maximizing over $\vy$ in~\eqref{problem:SVM}, and $f^\star$ is a reference value taken as the best primal objective value attained across all methods after a long run.

\Cref{fig:SVM} plots $f(\vx)-f^\star$ versus the number of full data passes. Across the three datasets, \aduca\ consistently achieves either the fastest or competitive-with-the-fastest reduction in suboptimality. This advantage can be attributed to its cyclic updates combined with locally estimated adaptive step sizes that leverage both full-operator and block-coordinate operator information, enabling \aduca\ to exploit block structure while avoiding the per-epoch line-search overhead of \texttt{CODER-LineSearch}. 

The first row of plots in \Cref{fig:SVM} compares \aduca\ to other methods implemented using standard $\ell_2$ geometry; the second row of plots is for the implementation of all methods using the Euclidean norms rescaled by $\mLambda,$ $\|\cdot\|_{\mLambda},$ to evaluate whether the performance improvements of \aduca\ are purely due to the row/column rescaling by $\mLambda$. We see that this is not the case, since a similar rescaling does not consistently benefit other algorithms and can, in fact, harm their performance  (consider, for instance, the performance of \graal\ on the gisette dataset, in the middle column of \Cref{fig:SVM}). 

\subsection{Ablation analysis for the hyperparameter \texorpdfstring{$\mu$}{mu}} \label{subsec:ablation}
\begin{figure}[t]
    \hspace*{\fill}
    \subfloat[\texttt{a9a,}\\ $\beta = 0.7,\, \rho = 1.3,\, \gamma = 0.05.$]{\includegraphics[width=0.32\textwidth]{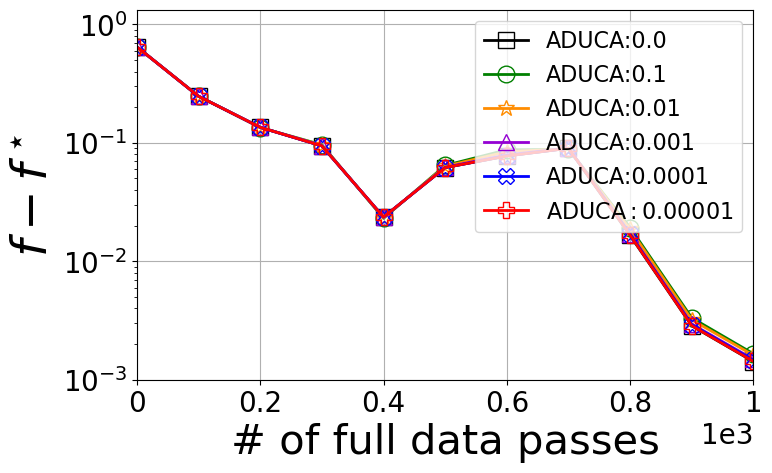}}\hfill
    \hspace*{\fill}
    \subfloat[\texttt{gisette,}\\ $\beta = 0.7,\, \rho = 1.3,\, \gamma = 0.05.$]{\includegraphics[width=0.32\textwidth]{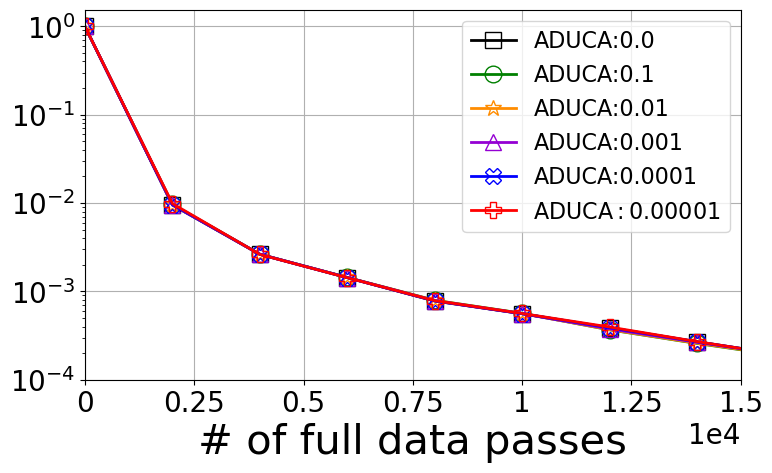}}\hfill
    \hspace*{\fill}
    \subfloat[\texttt{SUSY,}\\  $\beta = 0.7,\, \rho = 1.3,\, \gamma = 0.05.$]{\includegraphics[width=0.32\textwidth]
    {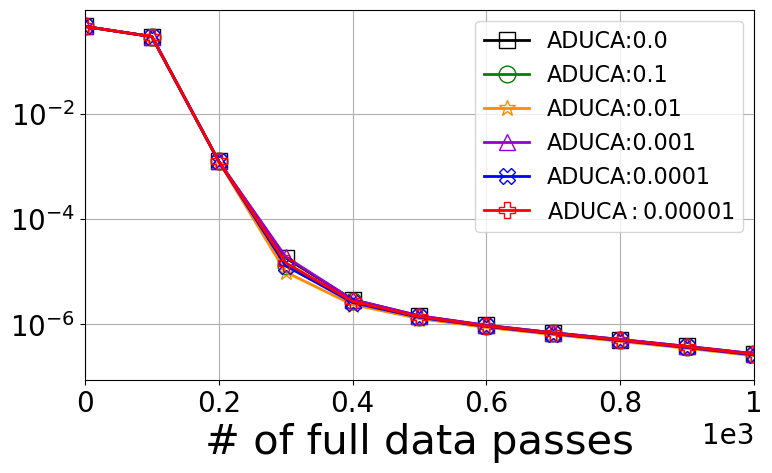}}\hfill
    \hspace*{\fill}
    \subfloat[\texttt{a9a,}\\  $\beta = 0.8,\, \rho = 1.2,\, \gamma = 0.2.$]{\includegraphics[width=0.32\textwidth]{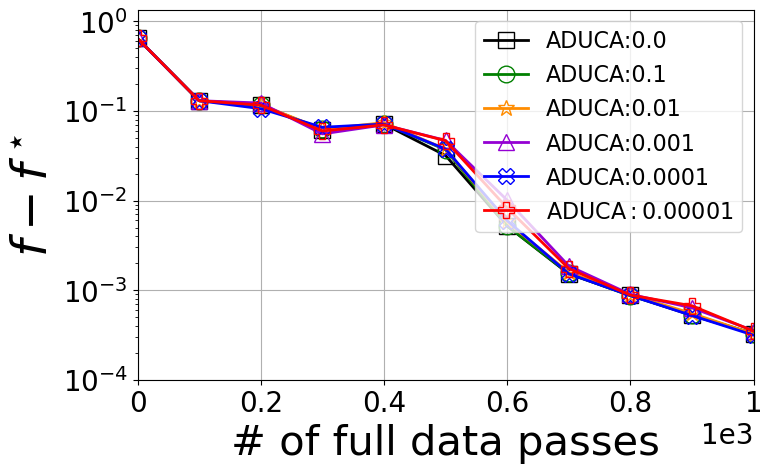}}\hfill
    \hspace*{\fill}
    \subfloat[\texttt{gisette,}\\  $\beta = 0.8,\, \rho = 1.2,\, \gamma = 0.2.$]{\includegraphics[width=0.32\textwidth]{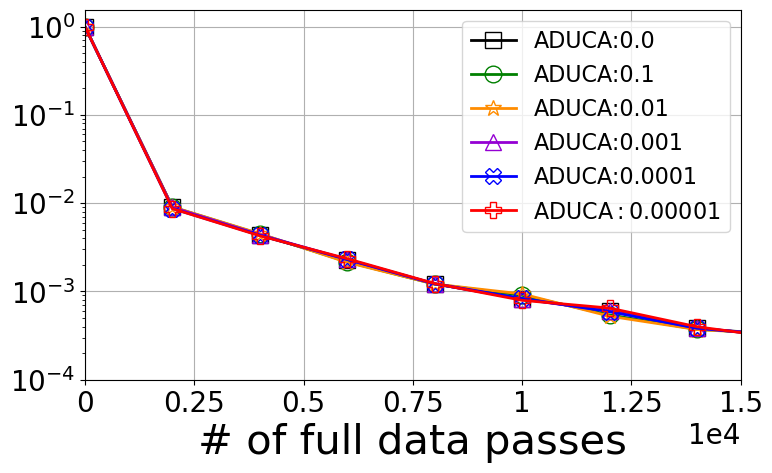}}\hfill
    \hspace*{\fill}
    \subfloat[\texttt{SUSY,}\\  $\beta = 0.8,\, \rho = 1.2,\, \gamma = 0.2.$]{\includegraphics[width=0.32\textwidth]{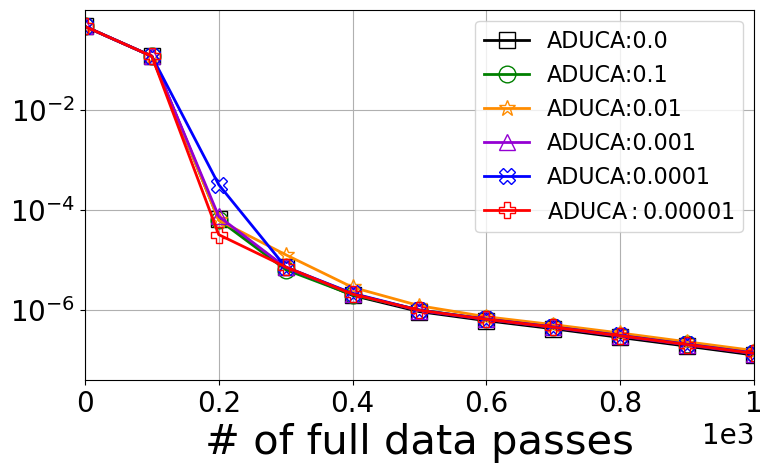}}\hfill
        \hspace*{\fill}
    \subfloat[\texttt{a9a,}\\  $\beta = 0.9,\, \rho = 1.1,\, \gamma = 0.3.$]{\includegraphics[width=0.32\textwidth]{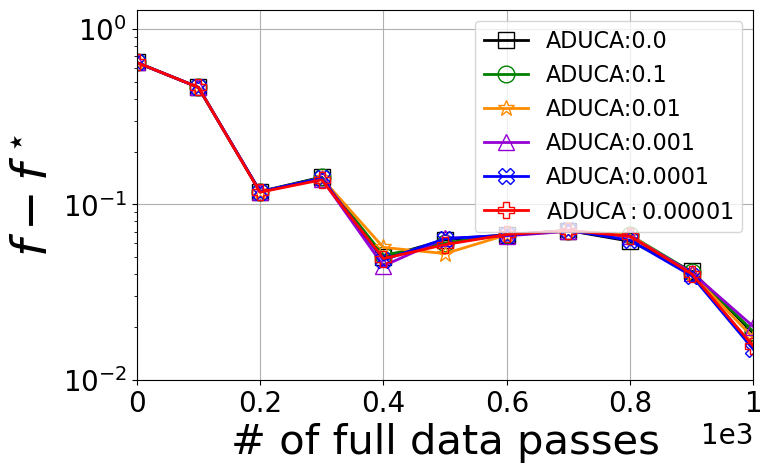}}\hfill
    \hspace*{\fill}
    \subfloat[\texttt{gisette,}\\  $\beta = 0.9,\, \rho = 1.1,\, \gamma = 0.3.$]{\includegraphics[width=0.32\textwidth]{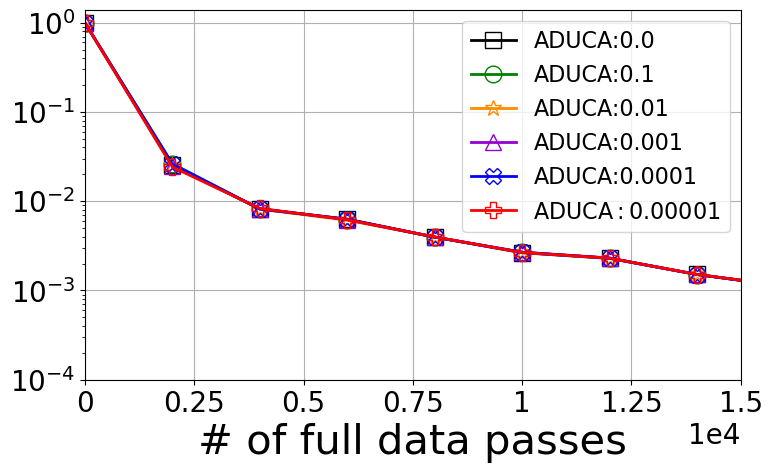}}\hfill
    \hspace*{\fill}
    \subfloat[\texttt{SUSY,}\\  $\beta = 0.9,\, \rho = 1.1,\, \gamma = 0.3.$]{\includegraphics[width=0.32\textwidth]{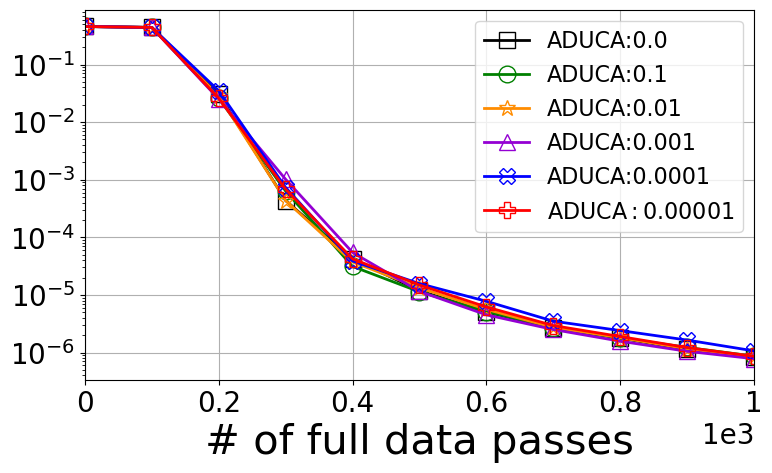}\label{fig:SUSY_ablation_beta-0.9_rho-1.1}}\hfill
    \caption{Sensitivity of \aduca\ to the hyperparameter $\mu$ used in the extrapolation weight $\omega_k$: the numbers in the legend correspond to different choices of $\mu$.}
    \label{fig:ablation}
\end{figure}

In this subsection, we study the sensitivity of \aduca{} to the hyperparameter $\mu$ in \Cref{alg:ADUCA}. Recall that $\mu$ affects the method only through the extrapolation weight $\omega_k = \frac{1 + \rho\beta\mu a_k}{1 + \mu a_k}$, so that $\omega_k = 1$ when $\mu = 0$ and $\omega_k \in (\rho\beta,1)$ when $\mu > 0$. Although $\mu$ is required in our theory to obtain near-linear convergence rates under strong convexity, it is typically unknown in practice. Our ablation study shows that, empirically, this parameter does not meaningfully affect the behavior of \aduca.

We use the SVM setup from~\Cref{subsec:svm} and run \aduca\ with several choices of $\mu$ (reported in the legend), under three different hyperparameter settings $(\beta,\rho,\gamma)$ shown in the subplot titles. As shown in~\Cref{fig:ablation}, the curves corresponding to different $\mu$ values are nearly indistinguishable across datasets and hyperparameter settings. This indicates that the practical performance of \aduca\ is largely insensitive to the attenuation introduced by $\omega_k$; consequently, a rough guess for $\mu$, or simply $\mu = 0$, is sufficient, as shown in our experiments.
 
\section{Conclusion}
We proposed \aduca---a cyclic block-coordinate method for solving generalized monotone variational inequalities. The method combines cyclic updates with fully adaptive step sizes and a one-cycle delayed-update mechanism, which reduces synchronization across blocks and makes \aduca\ naturally suitable for parallel or distributed implementations.
 
\bibliographystyle{plain}
\bibliography{references}

@inproceedings{cai2023empirical,
  author = {Cai, Xufeng and Lin, Cheuk Yin and Diakonikolas, Jelena},
  title = {Tighter convergence bounds for shuffled SGD via primal-dual perspective},
  booktitle = {Proc.~NeurIPS'24},
  year = {2024},
}

@article{nesterov2007,
  author = {Nesterov, Yurii},
  title = {Dual extrapolation and its applications to solving variational inequalities and related problems},
  journal = {Mathematical Programming},
  year = {2007},
  volume = {109},
  number = {2},
  pages = {319--344},
  publisher = {Springer},
}

@article{Nemirovski2004MirrorProx,
  author = {Nemirovski, Arkadi},
  title = {Prox-Method with Rate of Convergence ${O} (1/t)$ for Variational Inequalities with {L}ipschitz Continuous Monotone Operators and Smooth Convex-Concave Saddle Point Problems},
  journal = {SIAM Journal on Optimization},
  year = {2004},
  volume = {15},
  number = {1},
  pages = {229--251},
  publisher = {SIAM},
}

@misc{MathWorksBalance,
  author = {MathWorks},
  title = {Balance -- diagonal scaling to improve eigenvalue accuracy},
  howpublished = {\url{https://www.mathworks.com/help/matlab/ref/balance.html}},
}

@misc{MathWorksEig,
  author = {MathWorks},
  title = {Eig: eigenvalues and eigenvectors},
  howpublished = {\url{https://www.mathworks.com/help/matlab/ref/eig.html}},
}

@misc{RDocumentationBalance,
  author = {RDocumentation},
  title = {Balance a square matrix via {LAPACK}'s dgebal},
  howpublished = {\url{https://www.rdocumentation.org/packages/expm/versions/0.99-1.1/topics/balance}},
}

@misc{NumPyEig,
  author = {NumPy},
  title = {numpy.linalg.eig: Compute the eigenvalues and right eigenvectors of a square array},
  howpublished = {\url{https://numpy.org/doc/stable/reference/generated/numpy.linalg.eig.html}},
}

@misc{JuliaEigen,
  author = {Julia},
  title = {LinearAlgebra.eigen: Compute eigenvalues and eigenvectors},
  howpublished = {\url{https://docs.julialang.org/en/v1/stdlib/LinearAlgebra/\#LinearAlgebra.eigen}},
}

@article{malitsky2020golden,
  author = {Malitsky, Yura},
  title = {Golden ratio algorithms for variational inequalities},
  journal = {Mathematical Programming},
  year = {2020},
  volume = {184},
  number = {1},
  pages = {383--410},
  publisher = {Springer},
}

@article{nesterov2011solving,
  author = {Nesterov, Yurii and Scrimali, Laura},
  title = {Solving strongly monotone variational and quasi-variational inequalities},
  journal = {Discrete and Continuous Dynamical Systems},
  year = {2011},
  volume = {31},
  number = {4},
  pages = {1383--1396},
  publisher = {Discrete and Continuous Dynamical Systems},
}

@book{smith2013matrix,
  author = {Smith, Brian T and Boyle, James M. and Garbow, BS and Ikebe, Y and Klema, VC and Moler, CB},
  title = {Matrix eigensystem routines-{EISPACK} guide},
  series    = {Lecture Notes in Computer Science},
  publisher = {Springer},
  year = {2013},
  volume = {6},
}

@article{Alacaoglu2023BeyondGR,
  author = {Alacaoglu, Ahmet and B{\"o}hm, Axel and Malitsky, Yura},
  title = {Beyond the golden ratio for variational inequality algorithms},
  journal = {Journal of Machine Learning Research},
  volume = {24},
  number = {172},
  pages = {1--33},
  year={2023}
}

@article{diakonikolas2025block,
  title={A Block Coordinate and Variance-Reduced Method for Generalized Variational Inequalities of Minty Type},
  author={Diakonikolas, Jelena},
  journal={ACM/IMS Journal of Data Science},
  volume={2},
  number={2},
  pages={1--30},
  year={2025},
  publisher={ACM New York, NY}
}

@inproceedings{cai2023cyclic,
  author = {Cai, Xufeng and Song, Chaobing and Wright, Stephen and Diakonikolas, Jelena},
  title = {Cyclic block coordinate descent with variance reduction for composite nonconvex optimization},
  booktitle = {Proc.~ICML'23},
  year = {2023},
}

@article{SongDiakonikolas2023CODER,
  author = {Song, Chaobing and Diakonikolas, Jelena},
  title = {Cyclic coordinate dual averaging with extrapolation},
  journal = {SIAM Journal on Optimization},
  year = {2023},
  volume = {33},
  number = {4},
  pages = {2935--2961},
  publisher = {SIAM},
}

@article{osborne1960pre,
  author = {Osborne, EE},
  title = {On pre-conditioning of matrices},
  journal = {Journal of the ACM},
  year = {1960},
  volume = {7},
  number = {4},
  pages = {338--345},
  publisher = {ACM New York, NY, USA},
}

@article{karczmarz1937angenaherte,
  title={Angenaherte auflosung von systemen linearer glei-chungen},
  author={Karczmarz, Stefan},
  journal={Bull. Int. Acad. Pol. Sic. Let., Cl. Sci. Math. Nat.},
  pages={355--357},
  year={1937}
}

@article{duchi2011adaptive,
  title={Adaptive subgradient methods for online learning and stochastic optimization.},
  author={Duchi, John and Hazan, Elad and Singer, Yoram},
  journal={Journal of machine learning research},
  volume={12},
  number={7},
  year={2011}
}

@inproceedings{lin2023accelerated,
  author = {Lin, Cheuk Yin and Song, Chaobing and Diakonikolas, Jelena},
  title = {Accelerated cyclic coordinate dual averaging with extrapolation for composite convex optimization},
  booktitle = {Proc.~ICML'23},
  year = {2023},
}

@article{korpelevich1976extragradient,
  title={The extragradient method for finding saddle points and other problems},
  author={Korpelevich, Galina M},
  journal={Matecon},
  volume={12},
  pages={747--756},
  year={1976}
}

@article{nesterov2012efficiency,
  author = {Nesterov, Yu},
  title = {Efficiency of coordinate descent methods on huge-scale optimization problems},
  journal = {SIAM Journal on Optimization},
  year = {2012},
  volume = {22},
  number = {2},
  pages = {341--362},
  publisher = {SIAM},
}

@inproceedings{diakonikolas2020halpern,
  author = {Diakonikolas, Jelena},
  title = {Halpern Iteration for Near-Optimal and Parameter-Free Monotone Inclusion and Strong Solutions to Variational Inequalities},
  booktitle = {Proc.~COLT'20},
  year = {2020},
}

@article{yousefian2018stochastic,
  author = {Yousefian, Farzad and Nedi{\'c}, Angelia and Shanbhag, Uday V},
  title = {On stochastic mirror-prox algorithms for stochastic {C}artesian variational inequalities: Randomized block coordinate and optimal averaging schemes},
  journal = {Set-Valued and Variational Analysis},
  year = {2018},
  volume = {26},
  pages = {789--819},
  publisher = {Springer},
}

@book{ortega2000iterative,
    author = {Ortega, J. M. and Rheinboldt, W. C.},
    title = {Iterative Solution of Nonlinear Equations in Several Variables},
    publisher = {Society for Industrial and Applied Mathematics},
    year = {2000},
}

@article{chow2017cyclic,
  title={Cyclic coordinate-update algorithms for fixed-point problems: Analysis and applications},
  author={Chow, Yat Tin and Wu, Tianyu and Yin, Wotao},
  journal={SIAM Journal on Scientific Computing},
  volume={39},
  number={4},
  pages={A1280--A1300},
  year={2017},
  publisher={SIAM}
}

@article{beck2013convergence,
  title={On the convergence of block coordinate descent type methods},
  author={Beck, Amir and Tetruashvili, Luba},
  journal={SIAM journal on Optimization},
  volume={23},
  number={4},
  pages={2037--2060},
  year={2013},
  publisher={SIAM}
}

@article{mazumder2011sparsenet,
  author = {Mazumder, Rahul and Friedman, Jerome H and Hastie, Trevor},
  title = {Sparsenet: Coordinate descent with nonconvex penalties},
  journal = {Journal of the American Statistical Association},
  year = {2011},
  volume = {106},
  number = {495},
  pages = {1125--1138},
  publisher = {Taylor \& Francis},
}

@article{friedman2010regularization,
  title={Regularization paths for generalized linear models via coordinate descent},
  author={Friedman, Jerome H and Hastie, Trevor and Tibshirani, Rob},
  journal={Journal of statistical software},
  volume={33},
  pages={1--22},
  year={2010}
}

@inproceedings{gurbuzbalaban2017cyclic,
  author = {G{\"u}rb{\"u}zbalaban, Mert and Ozdaglar, Asuman and Parrilo, Pablo A and Vanli, N Denizcan},
  title = {When cyclic coordinate descent outperforms randomized coordinate descent},
  booktitle = {Proc.~NeurIPS'17},
  year = {2017},
}

@article{sun2019worst,
  author = {Sun, Ruoyu and Ye, Yinyu},
  title = {Worst-case complexity of cyclic coordinate descent: ${O}(n^2)$ gap with randomized version},
  journal = {Mathematical Programming},
  volume={185},
  number={1},
  pages={487--520},
  year={2021},
  publisher={Springer}
}

@inproceedings{song2022coordinate,
  author = {Song, Chaobing and Lin, Cheuk Yin and Wright, Stephen and Diakonikolas, Jelena},
  title = {Coordinate linear variance reduction for generalized linear programming},
  booktitle = {Proc.~NeurIPS'22},
  year = {2022},
}

@article{kotsalis2022simple,
  author = {Kotsalis, Georgios and Lan, Guanghui and Li, Tianjiao},
  title = {Simple and optimal methods for stochastic variational inequalities, {I}: Operator extrapolation},
  journal = {SIAM Journal on Optimization},
  year = {2022},
  volume = {32},
  number = {3},
  pages = {2041--2073},
  publisher = {SIAM},
}

@article{ouyang2021lower,
  author = {Ouyang, Yuyuan and Xu, Yangyang},
  title = {Lower complexity bounds of first-order methods for convex-concave bilinear saddle-point problems},
  journal = {Mathematical Programming},
  year = {2021},
  volume = {185},
  number = {1-2},
  pages = {1--35},
  publisher = {Springer},
}

@inproceedings{golowich2020last,
  author = {Golowich, Noah and Pattathil, Sarath and Daskalakis, Constantinos and Ozdaglar, Asuman},
  title = {Last iterate is slower than averaged iterate in smooth convex-concave saddle point problems},
  booktitle = {Proc.~COLT'20},
  year = {2020},
}

@inproceedings{diakonikolas2021efficient,
  author = {Diakonikolas, Jelena and Daskalakis, Constantinos and Jordan, Michael I},
  title = {Efficient methods for structured nonconvex-nonconcave min-max optimization},
  booktitle = {Proc.~AISTATS'21},
  year = {2021},
}

@article{diakonikolas2021potential,
  title={Potential function-based framework for minimizing gradients in convex and min-max optimization},
  author={Diakonikolas, Jelena and Wang, Puqian},
  journal={SIAM Journal on Optimization},
  volume={32},
  number={3},
  pages={1668--1697},
  year={2022},
  publisher={SIAM}
}

@inproceedings{malitsky2019adaptive,
  author = {Malitsky, Yura and Mishchenko, Konstantin},
  title = {Adaptive gradient descent without descent},
  booktitle = {Proc.~ICML'20},
  year = {2020},
}

@inproceedings{malitsky2024adaptive,
  author = {Malitsky, Yura and Mishchenko, Konstantin},
  title = {Adaptive proximal gradient method for convex optimization},
  booktitle = {Proc.~NeurIPS'24},
  year = {2024},
}

@article{latafat2024adaptive,
  author = {Latafat, Puya and Themelis, Andreas and Stella, Lorenzo and Patrinos, Panagiotis},
  title = {Adaptive proximal algorithms for convex optimization under local {L}ipschitz continuity of the gradient},
  journal = {Mathematical Programming},
  volume={213},
  number={1},
  pages={433--471},
  year={2025},
  publisher={Springer}
}

@article{latafat2025convergence,
  author = {Latafat, Puya and Themelis, Andreas and Villa, Silvia and Patrinos, Panagiotis},
  title = {On the convergence of proximal gradient methods for convex simple bilevel optimization},
  journal = {Journal of Optimization Theory and Applications},
  volume={204},
  number={3},
  pages={51},
  year={2025},
  publisher={Springer}
}

@article{li2023simple,
  title={A simple uniformly optimal method without line search for convex optimization},
  author={Li, Tianjiao and Lan, Guanghui},
  journal={Mathematical Programming},
  year={2025},
  publisher={Springer}
}

@inproceedings{vladarean2021first,
  author = {Vladarean, Maria-Luiza and Malitsky, Yura and Cevher, Volkan},
  title = {A first-order primal-dual method with adaptivity to local smoothness},
  booktitle = {Proc.~NeurIPS'21},
  year = {2021},
}

@article{lan2024auto,
  author = {Lan, Guanghui and Li, Tianjiao},
  title = {Auto-conditioned primal-dual hybrid gradient method and alternating direction method of multipliers},
  journal = {arXiv preprint arXiv:2410.01979},
  year = {2024},
}

@article{lan2024projected,
  author = {Lan, Guanghui and Li, Tianjiao and Xu, Yangyang},
  title = {Projected gradient methods for nonconvex and stochastic optimization: New complexities and auto-conditioned stepsizes},
  journal = {arXiv preprint arXiv:2412.14291},
  year = {2024},
}

@article{chang2011libsvm,
  author = {Chang, Chih-Chung and Lin, Chih-Jen},
  title = {{LIBSVM}: a library for support vector machines},
  journal = {ACM Transactions on Intelligent Systems and Technology},
  year = {2011},
  volume = {2},
  number = {3},
  pages = {1--27},
  publisher = {Acm New York, NY, USA},
}

@inproceedings{
borodich2026nesterov,
title={Nesterov Finds {GRAAL}: Optimal and Adaptive Gradient Method for Convex Optimization},
author={Ekaterina Borodich and Dmitry Kovalev},
booktitle={Proc.~ICLR'26},
year={2026},
}

@article{suh2025adaptive,
  author = {Suh, Jaewook J and Ma, Shiqian},
  title = {An Adaptive and Parameter-Free Nesterov's Accelerated Gradient Method for Convex Optimization},
  journal = {arXiv preprint arXiv:2505.11670},
  year = {2025},
}

@inproceedings{bach2019universal,
  title={A universal algorithm for variational inequalities adaptive to smoothness and noise},
  author={Bach, Francis and Levy, Kfir Y},
  booktitle={Proc.~COLT'19},
  year={2019},
}

@inproceedings{DBLP:conf/aaai/EneNV21,
  author = {Alina Ene and Huy L. Nguyen and Adrian Vladu},
  title = {Adaptive Gradient Methods for Constrained Convex Optimization and Variational Inequalities},
  booktitle = {Proc.~AAAI'21},
  year = {2021},
}

@inproceedings{DBLP:conf/aaai/EneN22,
  author = {Alina Ene and Huy Le Nguyen},
  title = {Adaptive and Universal Algorithms for Variational Inequalities with Optimal Convergence},
  booktitle = {Proc.~AAAI'22},
  year = {2022},
}

\appendix

\section{Proof of Lemma \ref{lemma:simplified-steps}}

\begin{proof}
Observe first that the last condition for $k \geq 1$ in \eqref{eq:complete-step-size-conditions}  implies a lower bound on $\phi_{k-1}.$ Specifically, as $a_k \leq \frac{\eta}{2} \frac{\sqrt{(1-\tau)\omega_{k-1}/2}}{\hat{L}_k}\phi_{k-1}$, we have
\begin{equation}\label{eq:lower-bnd-phi-k-1}
    \phi_{k-1} \geq a_k \hat{L}_k \frac{2\sqrt{2}}{\eta \sqrt{(1-\tau)\omega_{k-1}}},\quad \forall k \geq 1. 
\end{equation}
We now argue, using \eqref{eq:lower-bnd-phi-k-1}, that the right-hand sides of inequalities in the last two lines of \eqref{eq:complete-step-size-conditions} are bounded below by one of the first two conditions in \eqref{eq:complete-step-size-conditions}. First, \begin{align}
    \frac{\eta \sqrt{\tau}}{2\sqrt{3}}\frac{\omega_{k-1}\sqrt{\omega_{k-2}}}{\hat{L}_{k-1}\sqrt{1 + \omega_{k-1}}}\phi_{k-2} &\geq \sqrt{\frac{2}{3}}\cdot \sqrt{\frac{\tau}{1-\tau}} \cdot \frac{\omega_{k-1}}{\sqrt{1+\omega_{k-1}}} a_{k-1}\notag\\
    &\geq \sqrt{\frac{2}{3}}\cdot \sqrt{\frac{\tau}{1-\tau}} \cdot \frac{\rho\beta}{\sqrt{1+\rho\beta}} a_{k-1},\notag
\end{align}
where the last inequality comes from $\omega_{k-1} \in (\rho\beta, 1].$ The last inequality is bounded below by $\rho_0 a_{k-1}$ whenever $\sqrt{\frac{2}{3}}\cdot \sqrt{\frac{\tau}{1-\tau}} \cdot \frac{\rho\beta}{\sqrt{1+\rho\beta}} \geq \rho_0,$ which, solving for $\tau,$ is equivalent to the condition $\tau \geq \frac{3\rho_0^2(1 + \rho\beta)}{2(\rho\beta)^2 + 3\rho_0^2(1 + \rho\beta)}.$

For the remaining condition in \eqref{eq:complete-step-size-conditions}, we similarly argue that under an appropriate choice of $\tau,$ the right-hand side of the last inequality in \eqref{eq:complete-step-size-conditions} is bounded below by $\rho_0 a_{k-1},$ so this condition must be satisfied as $a_k \leq \rho_0 a_{k-1}$ by the first condition in \eqref{eq:complete-step-size-conditions}. To see this, again using \eqref{eq:lower-bnd-phi-k-1}, we have
\begin{align*}
    \frac{\eta\sqrt{\tau}}{2\sqrt{3}}\frac{\omega_{k-1}}{\hat{L}_{k-2}\sqrt{1 + \omega_{k-1}}}\sqrt{\frac{\omega_{k-3}}{\omega_{k-2}}}\frac{a_{k-1}}{a_{k-2}}\phi_{k-3} &\geq \sqrt{\frac{2}{3}}\cdot \sqrt{\frac{\tau}{1-\tau}} \cdot \frac{\omega_{k-1}}{\sqrt{\omega_{k-2}(1+\omega_{k-1})}} a_{k-1} \\
    &\geq \sqrt{\frac{2}{3}}\cdot \sqrt{\frac{\tau}{1-\tau}}\cdot \frac{\rho \beta}{\sqrt{1+\rho\beta}}a_{k-1},  
\end{align*}
where we used that $\omega_{k-2} \leq 1$ and $\frac{\omega_{k-1}}{\sqrt{1+\omega_{k-1}}} \geq \frac{\rho \beta}{\sqrt{1+\rho\beta}}.$ As already argued, the last inequality is further bounded below by $\rho_0 a_{k-1}$ for $\tau \geq \frac{3\rho_0^2(1 + \rho\beta)}{2(\rho\beta)^2 + 3\rho_0^2(1 + \rho\beta)}.$

Setting $\tau = \frac{3\rho_0^2(1 + \rho\beta)}{2(\rho\beta)^2 + 3\rho_0^2(1 + \rho\beta)} < 1,$ \eqref{eq:complete-step-size-conditions} simplifies to requiring, for $k \geq 1,$ that
\begin{equation}\notag \begin{aligned}
        a_k &\leq \min\Big\{\rho_0 a_{k-1},  \frac{\eta}{2}\min\Big\{ \Big( \frac{\sqrt{\tau}\, \omega_{k-1}}{\sqrt{3}L_k\sqrt{1+\omega_{k-1}}},\, \frac{\sqrt{(1-\tau)\omega_{k-1}}}{\sqrt{2}\hat{L}_k}\Big)\Big\}\phi_{k-1}\Big\}.\end{aligned}
\end{equation}
Notice that because both $\omega_{k-1}$ and $\phi_{k-1}$ depend on $\mu,$ this choice of step sizes is valid only for the known value of $\mu.$ When $\mu$ is not known, we can simply use $\mu \geq 0$ and $\omega_{k-1} \in (\rho\beta, 1]$ to bound below $\omega_{k-1}, \frac{\omega_{k-1}}{\sqrt{1+\omega_{k-1}}},$ and $\phi_{k-1}$, and obtain the  sufficient conditions for the step size stated in \eqref{eq:simple-step-size-conditions-unknown-mu}. 
\end{proof}

\end{document}